\documentclass[11pt,reqno]{amsart}

\usepackage[colorlinks=true,citecolor=black!60!green,linkcolor=black!60!red,filecolor=black!60!cyan,urlcolor=black!60!magenta]{hyperref}
\usepackage[text={6.1in,9in},centering]{geometry}
\usepackage{amssymb,amsmath,amsthm,mathtools,extpfeil,mathscinet}
\usepackage[all,cmtip]{xy}
\usepackage{tikz}
\usepackage{tikz-cd}
\usepackage[shortlabels]{enumitem}
\usepackage{xcolor}


\makeatletter
\newtheorem*{rep@theorem}{\rep@title}
\newcommand{\newreptheorem}[2]{%
\newenvironment{rep#1}[1]{%
 \def\rep@title{#2 \ref{##1}}%
 \begin{rep@theorem}}%
 {\end{rep@theorem}}}
\makeatother

\newtheorem{thm}{Theorem}
\newreptheorem{thm}{Theorem}
\newtheorem*{thm*}{Theorem}
\newtheorem*{lem*}{Lemma}
\newtheorem{thmA}{Theorem}

\newtheorem{lem}[thm]{Lemma}
\newtheorem{prop}[thm]{Proposition}
\newtheorem*{prop*}{Proposition}
\newtheorem{cor}[thm]{Corollary}
\newreptheorem{cor}{Corollary}
\newtheorem{conj}[thm]{Conjecture}
\theoremstyle{definition}
\newtheorem*{setting}{Setting and notation}
\newtheorem{defn}[thm]{Definition}
\newtheorem{rmk}[thm]{Remark}

\newtheorem{ex}[thm]{Example}

\numberwithin{thm}{section}
\numberwithin{equation}{section}

\DeclareMathOperator{\id}{id}
\DeclareMathOperator{\img}{im}
\DeclareMathOperator{\vol}{vol}

\DeclareMathOperator{\link}{lk}
\DeclareMathOperator{\Hom}{Hom}

\DeclareMathOperator{\Prim}{Prim}
\newcommand{\ph}{\varphi}
\newcommand{\epsi}{\varepsilon}

\newcommand*\strong[1]{\textbf{\textit{#1}}}

\newcommand{\QQ}{\mathbb{Q}}
\newcommand{\NN}{\mathbb{N}}
\newcommand{\ZZ}{\mathbb{Z}}
\newcommand{\RR}{\mathbb{R}}
\newcommand{\mubar}{\bar\mu}

\linespread{1.2} 

\begin{document}
\title{Milnor invariants and thickness of spherical links}
\author{Rafa{\l} Komendarczyk}
\author{Robin Koytcheff}
\author{Fedor Manin}

\begin{abstract}
  The ropelength of a knot or link is the minimal number of inches of 1-inch-thick rope that it takes to tie it.  The relationship of this measurement to knot and link invariants has been studied by various authors.  We give the first results of this type for higher-dimensional spherical links, generalizing work of the first author and Michaelides in the classical case.  We find optimal asymptotic bounds on their Milnor invariants in terms of thickness, uncovering a dichotomy between a polynomial and an exponential regime.  Along the way, we give a detailed treatment of these Milnor invariants and their properties using Massey products.  As an application, we resolve a question of Freedman and Krushkal.
\end{abstract}
\maketitle

\setcounter{tocdepth}{1}
\tableofcontents

\section{Introduction}

\subsection{Background}
The goal of quantitative topology is to understand the geometric ``complexity'', in any relevant sense, of the objects whose existence is predicted by algebraic and geometric topology.  When studying smooth embeddings of a manifold $M$ in a compact ambient manifold $N$, a natural geometric measure of complexity is the \strong{thickness}, also called the \strong{reach} or (more descriptively) the \strong{normal injectivity radius}: the largest $\tau$ such that the exponential map on the $\tau$-neighborhood of the zero section in the normal bundle of $M$ is still an embedding.  Locally, this bounds curvature, and globally, how close distant points on $M$ can come to touching.  The more complicated the embedding, the smaller its thickness must be.  Variations on this notion exist for simplicial complexes and their PL embeddings.

With this setup, several natural questions come to mind:
\begin{enumerate}
    \item How does the least complexity of an embedding of $M$ depend on the topology, or some intrinsic measure of the complexity, of $M$?
    \item For a fixed $M$, how does the minimal complexity of an embedding in an isotopy class relate to the topological invariants of that isotopy class?
    \item Within a fixed isotopy class, what is the least complexity of an isotopy between two embeddings as a function of the complexity of the embeddings?
\end{enumerate}
All of these have been investigated in certain contexts.  (Topologically, embedding theory splits into several domains in which different tools and techniques can be used, and these suggest different approaches to the quantitative problems.)

The first question has been studied in high codimension: in the Whitney embedding range, in which generic maps are embeddings, Gromov and Guth \cite{GrGu} showed that one can find such embeddings with low complexity\footnote{Portnoy \cite{Portnoy} has recently improved on their result, obtaining the best possible asymptotic growth.}; Freedman and Krushkal \cite{FK} showed that the complexity can get much greater as soon as one leaves the Whitney range.

The third question, in contrast, has been studied in codimensions 1 \cite{NabSph} and 2 \cite{NWknots}; in both cases, the answer is that this function grows unimaginably fast.

Finally, the second question has been studied, starting in the 1990s \cite{CKS2,CKS}, mainly in the classical context of knot theory in $\RR^3$.  Here, the complexity is measured not by thickness but by a closely related measurement called \strong{ropelength}: informally, the number of inches of 1-inch-thick rope needed to tie a given knot or link.  Past work has investigated the relationships of ropelength to other notions of complexity of knots and links, such as crossing number \cite{BS,DEPZ}, as well as to values of algebraic invariants, such as Milnor invariants of links \cite{KM}.

The present paper appears to be the first attempt to study the second question in higher dimensions, and perhaps the first paper to study quantitative embedding theory outside very low and very high codimensions.  In this range, where $\dim M$ is between roughly $\frac{2}{3}\dim N$ and $\dim N-3$, 
the state-of-the-art topological tool for studying embeddings is the manifold calculus of Goodwillie, Klein, and Weiss 
\cite{Weiss:1999, GW:1999, GK:2015, GKW:2001, GKW-Haefliger}.  As the general setting is quite complicated, we do not (yet) attempt to study it quantitatively, but stick to embeddings of simple manifolds, namely disjoint unions of spheres, possibly of different dimensions.  We study higher-dimensional generalizations of Milnor invariants of these higher-dimensional links, generalizing the work of the first author and Michaelides \cite{KM} for classical links.

\subsection{Milnor invariants}
Milnor invariants of links, introduced by Milnor in his PhD thesis \cite{Milnor,Milnor2}, are a generalization of the linking number which detects higher-order linking; for example, the unique order-3 Milnor invariant detects the linking of the Borromean rings, and more generally, \strong{Brunnian links} (those that become unlinked when any component is removed) have Milnor invariants that detect their linking.  Milnor studied these invariants for classical links, i.e., embeddings $S^1 \sqcup \dots \sqcup S^1 \to S^3$.

In general, each sequence of $k$ link components yields a Milnor invariant $\mubar(i_1,\ldots,i_k)$ associated to that sequence.  These Milnor invariants are well-defined integers if the invariants associated to subsequences are zero; otherwise they are defined modulo an integer.

Milnor invariants come in two flavors.  Those studied in \cite{Milnor} 
are indexed by distinct components of the link and are invariant under \strong{link homotopy}: a deformation which keeps components disjoint but (unlike an isotopy) need not be injective on each component.  Those studied in \cite{Milnor2} are more general and detect more links: for example, $\mubar(1,1,2,2)$ is nontrivial for the link-homotopically trivial Whitehead link.

Milnor originally defined his invariants by studying lower central series quotients of the fundamental group of the link complement.  Turaev \cite{Turaev} and later Porter \cite{Porter} gave an alternate definition based on Massey products in cohomology.  This cohomological definition has the advantage of extending verbatim to higher dimensions, and this is the definition of Milnor invariants we use in this paper.  In part to justify this choice, we show that versions of the relations between Milnor invariants proved originally by Milnor hold in higher dimensions (see \S\ref{S:Milnor-props}), and demonstrate that, for invariants with distinct indices, our definition coincides (at least for Brunnian links, but conjecturally in a wider range of cases where both are defined) with another higher-dimensional generalization of Milnor invariants defined by Koschorke \cite{Kos1,Kos2} (see \S\ref{S:Massey-Milnor-Koschorke}).  Koschorke's invariants have the advantage that they can be defined even when the individual link components are not embedded; this is not relevant to our results since they depend on the thickness of individual components.  They are also invariant with respect to a relation which is coarser than link homotopy in some cases.

The classification of links is much simplified when all components have codimension at least $3$ \cite{HaefLinks,CFS}; here, equivalent Milnor invariants were previously defined by Haefliger and Steer \cite{HaeSt} for triple links and Turaev \cite{Turaev2} in general.  In this case, links $S^{p_1} \sqcup \cdots \sqcup S^{p_r} \to S^m$ form a group $L_{(p_1,\ldots,p_r)}^m$, under the (well-defined) operation of connected sum, and all nontrivial Milnor invariants are well-defined homomorphisms of this group to $\ZZ$.  Moreover, comparing with the classification in \cite{CFS} one sees that, together with the Haefliger knotting invariants of the individual components, the Milnor invariants distinguish rational isotopy classes, i.e., elements of $L_{(p_1,\ldots,p_r)}^m \otimes \QQ$.

\begin{setting}
In the most general case, we consider $C^\infty$-embeddings $S^{p_1} \sqcup \cdots \sqcup S^{p_r} \to S^m$ with $1 \leq p_i \leq m-2$, where $S^m$ is the sphere of unit radius in $\RR^{m+1}$.  The Milnor invariant $\mubar(i_1,\ldots,i_d)$, $d\geq 2$, is a well-defined integer whenever certain lower-order invariants vanish\footnote{We focus on rational phenomena, but in general, $\mubar(i_1,\ldots,i_d)$ would be well-defined modulo the gcd of lower-order invariants.}, and it is always zero unless
\begin{equation}
\label{eq:dim-assumption}
    p_{i_1}+\cdots+p_{i_d}=(m-2)(d-1)+1.
\end{equation}  
Throughout the paper, we write $q_i=m-p_i-1$, and the dimensional assumption \eqref{eq:dim-assumption} can be rewritten perhaps more transparently as
\begin{equation}
\label{eq:dim-assumption-reformulation}
q_{i_1}+\cdots+q_{i_d}-(d-2)=m-1.
\end{equation}
\end{setting}

\subsection{Main results and open questions}
The strongest result in this paper achieves a more or less complete understanding of the relationship between thickness and Milnor invariants with distinct indices.  Without loss of generality, the number of components $r$ is the depth $d$ of the invariant, and our result is as follows:
\begin{thmA} \label{main}
    Let $f:S^{p_1} \sqcup \cdots S^{p_d} \to S^m$ be a link of thickness $\tau$ satisfying
    \begin{equation}
    \label{eq:dim-assumption-nr}
        p_1+\cdots+p_d=(m-2)(d-1)+1,
    \end{equation}
    such that Milnor invariants indexed by proper subsequences of $(1,\ldots,d)$ are trivial. 
    \begin{enumerate}[(i)]
    \item \label{intro-case:poly} If at least one of the $p_i$ is $1$, then
    \[\mubar(1,\ldots,d) \leq C(m,d)\tau^{-(m+1)(d-1)}.\]
    \item \label{intro-case:link} If $d=2$, then $\mubar(1,2)$ is the linking number of the two components, and is bounded by $C(m)\tau^{-(m+1)}$.
    \item \label{intro-case:exp} In all other cases, $\mubar(1,\ldots,d) \leq \exp(C(m,d)\tau^{-m})$.
    \end{enumerate}
    Here, $C(m)$ and $C(m,d)$ are constants.  The estimates are asymptotically optimal for every combination of $m$, $d$, and $p_1,\ldots,p_d$.
\end{thmA}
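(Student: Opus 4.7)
The plan is to compute $\bar\mu(1,\ldots,d)$ as a Massey product of controlled de Rham representatives on the link complement $X = S^m \setminus L$. I would first choose forms $\alpha_i$ representing the linking classes in $H^{q_i}(X;\RR)$, supported in thin annular shells near the respective components, with $L^\infty$-norm at most $C(m) \tau^{-q_i}$. The vanishing hypothesis on proper sub-Milnor invariants then lets me inductively build a Massey defining system: forms $\beta_I$, indexed by proper non-singleton sub-sequences $I$ of $(1,\ldots,d)$, satisfying $d\beta_I = \sum_{I = J \cdot K} \pm \beta_J \wedge \beta_K$ with $\beta_{(i)} = \alpha_i$. Then $\bar\mu(1,\ldots,d)$ equals, up to a known normalization, the integral of the top-level cocycle $\sum_{(1,\ldots,d) = J \cdot K} \pm \beta_J \wedge \beta_K$ over a meridional $(m-1)$-cycle of one of the components. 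The core task is to bound the $L^\infty$-norms of the $\beta_I$ at each inductive step, which amounts to solving $d\beta_I = \eta_I$ on $X$ with a norm bound depending on the geometry of $X$.

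The three cases correspond to three different strategies for producing these controlled primitives. In case (ii), $d=2$, one bypasses Massey theory altogether: the linking number admits the Gauss-integral representation $\link(K_1,K_2) = \int_{K_1 \times K_2} G$ for the standard singular kernel $G$ on $S^m \times S^m$, and the bound follows from the volume estimates $\vol(K_i) \leq C(m)\tau^{-p_i}$ together with a separation-based cutoff of $G$ at distance $\tau$. In case (i), one singles out a circle component $\gamma$ with $p_i = 1$ and generalizes the iterated-integral approach of \cite{KM}: the Massey pairing reduces to a Chen-type iterated integral along $\gamma$, whose length is at most $C(m)\tau^{-1}$; each of the $d-1$ factors in that iterated integral is a linking-like function pointwise bounded by a power of $\tau^{-1}$, which combines to yield the polynomial bound $\tau^{-(m+1)(d-1)}$. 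In case (iii), with all $p_i \geq 2$ and $d \geq 3$, no such 1-dimensional reduction is possible; the $\beta_I$ must be produced by a general controlled-primitive argument on $X$, whose best $L^\infty$ filling constant scales as $\exp(C(m)\tau^{-m})$, giving the stated exponential bound.

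The main obstacle is the upper bound in case (iii): establishing the $\exp(C(m)\tau^{-m})$ filling estimate requires combining $L^\infty$-Hodge-type estimates on local patches with a \v{C}ech--de Rham gluing across a triangulation of $X$ of scale $\tau$, so that the primitive norms compound in a controlled way across the roughly $\tau^{-m}$ simplices. The asymptotic sharpness of all three bounds, meanwhile, requires explicit link families realizing each regime: in the polynomial cases, cabling or Bing-doubling of higher-dimensional Borromean and Hopf-like configurations should suffice, while the exponential regime is likely realized by iterated plumbings that produce large iterated commutators in the link group while keeping thickness under control.
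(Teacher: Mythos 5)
Your framework---expressing $\mubar(1,\ldots,d)$ via a Massey defining system of controlled de Rham representatives and bounding $L^\infty$-norms of primitives inductively---is the paper's, and cases (ii) and (iii) are essentially as in the paper: the Gauss-kernel and $L^\infty$-form estimates for linking numbers are interchangeable, and the $\exp(C\tau^{-m})$ coisoperimetric filling constant on the thick link complement is exactly what produces the exponential bound in (iii).

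The gap is in case (i). You assert the invariant reduces to a Chen-type iterated integral along a circle $\gamma$ of length at most $C\tau^{-1}$ (a minor slip: a $\tau$-thick circle in $S^m$ can have length $\sim\tau^{-(m-1)}$) with each factor ``pointwise bounded by a power of $\tau^{-1}$.'' The serious issue is that a depth-$(d-1)$ Massey defining system requires \emph{primitives} of products of the Alexander-dual forms, and if those primitives are taken on the $m$-dimensional link complement---the only space your sketch provides---the coisoperimetric constant is the same $\exp(C\tau^{-m})$ you invoke in case (iii), so no polynomial bound results. Your claim that the factors are polynomially bounded is precisely the hard step and is unsupported. The paper's key move, which you are missing, is a refined Massey integral formula in which the auxiliary primitives $\ph^{l/r}_{i..j}$ are taken on the tubular neighborhoods $N_i$ of individual components rather than on the full complement. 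Since one $p_i=1$ forces all the others to equal $m-2$, a degree count shows that every form one must antidifferentiate in some $N_j$ has degree either $1$ (its primitive is a function, controlled linearly in the diameter) or $m-2=p_j$ (top degree on the base $(m-2)$-sphere, controlled linearly in its volume). Both constants are $O(\tau^{-(m-1)})$, never exponential; this dichotomy of extremal degrees is the entire reason the polynomial regime exists, and it is absent from your proposal. As for sharpness, your suggestions are in the right spirit but vaguer than necessary: the paper's polynomial construction hinges on the Milnor-group identity $[e_i^a,w]=[e_i,w]^a$, and the exponential construction nests $\sim\tau^{-m}$ copies of a degree-$2$ cable exterior in one half of a Heegaard splitting so the winding doubles at each stage.
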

In \S\ref{S:examples} we present families of examples that realize this asymptotic growth.

The bound on the linking number in part \ref{intro-case:link}, and its sharpness, is easy and was observed at the latest by Freedman and Krushkal \cite{FK}, though it is closely related to earlier ideas of Gromov \cite{GroHED}.  In the classical setting $m=3$, Theorem \ref{main}\ref{intro-case:poly} recovers the result of the first author and Michaelides \cite{KM}, albeit without their explicit estimates on the constant $C(3,d)$.  In all other cases, the results are new; moreover, the examples we use to show that they are asymptotically sharp are new even in the classical setting.

Perhaps the most surprising aspect is the sharpness of part \ref{intro-case:exp}: that Milnor invariants can be exponentially large in terms of the thickness.  We give a method of constructing examples which works whenever there are at least two components of codimension at least $3$; by \eqref{eq:dim-assumption} this is equivalent to having no $1$-dimensional components.  It relies on the existence of metrics on the sphere which contain long isometrically embedded telescopes, and therefore pairs of small lower-dimensional spheres with large linking number.  For antecedents to this technique, see for example \cite[\S4]{FK} and \cite[Appendix A]{Has}.

This family of examples also demonstrates that thickness, a measure of the complexity of an \emph{embedded object}, can behave very differently from measures of complexity of embeddings \emph{qua maps}.  To illustrate this, we show in Theorem \ref{thm:upper-bound}\ref{case:Lip} that if in addition to thickness we bound the local bilipschitz constant of the embedding $f$, we recover a polynomial bound on Milnor invariants.

We achieve somewhat less for Milnor invariants with repeated indices, leaving some tantalizing open questions:
\begin{thmA} \label{main-repeated}
    Let $f:S^{p_1} \sqcup \cdots \sqcup S^{p_r} \to S^m$ be a link of thickness $\tau$ such that \eqref{eq:dim-assumption} holds and all Milnor invariants indexed by proper cyclic subsequences of $(\ell_1,\ldots,\ell_d)$ are trivial.
    \begin{enumerate}[(i)]
    \item \label{intro-case:poly-rep} If one of the $p_{\ell_i}$ is $1$ (in which case the rest must be $m-2$), then
    \[\mubar(\ell_1,\ldots,\ell_d) \leq C(m,d)\tau^{-2(m+1)(d-1)}.\]
    \item \label{intro-case:exp-rep} In all other cases, $\mubar(\ell_1,\ldots,\ell_d) \leq \exp(C(m,d)\tau^{-m})$.
    \end{enumerate}
    Again, 
    $C(m,d)$ is a constant.
\end{thmA}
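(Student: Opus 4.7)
The approach is to reduce Theorem \ref{main-repeated} to Theorem \ref{main} via a \emph{parallel pushoff} (or cabling) construction, an analogue of a technique first used by Milnor for classical links. Given a link $f:\bigsqcup_j S^{p_j} \to S^m$ and a Milnor-index sequence $(\ell_1,\ldots,\ell_d)$ in which component $j$ appears with multiplicity $k_j$, I would construct a new link $f':\bigsqcup_{(j,a)} S^{p_j} \to S^m$ with $d$ distinct components by replacing each $L_j$ with $k_j$ parallel pushoffs $L_{j,1},\ldots,L_{j,k_j}$ placed inside the $\tau$-tubular neighborhood of $L_j$ using the normal framing provided by the thickness. The plan is to identify $\mubar_f(\ell_1,\ldots,\ell_d)$ with the distinct-index invariant $\mubar_{f'}(1,\ldots,d)$ up to a combinatorial factor depending only on $m$ and $d$, and then apply Theorem \ref{main} to $f'$.

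The first step is \emph{cohomological pushoff-invariance}. Under the inclusion of complements $S^m \setminus f' \hookrightarrow S^m \setminus f$, the Thom (meridian) class of $L_j$ pulls back to the sum of the Thom classes of its pushoffs. Expanding the defining Massey product for $\mubar_{f'}$ using multilinearity produces a sum of Massey products indexed by all ways to choose one pushoff per occurrence of each index; using the cyclic-vanishing hypothesis together with the multilinearity and higher Leibniz identities for Milnor invariants developed in \S\ref{S:Milnor-props}, this sum collapses to $\mubar_f(\ell_1,\ldots,\ell_d)$ up to a constant factor depending only on $(k_1,\ldots,k_r)$. This step relies essentially on the Turaev--Porter definition of Milnor invariants set up earlier in the paper.

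The second step is the quantitative estimate on the thickness of $f'$. In case \ref{intro-case:exp-rep}, every $p_{\ell_i}$ is at least $2$, so all components involved have codimension $\geq 3$; the pushoffs can be placed at pairwise distance $\sim \tau/d$ in the normal disk bundle while each maintains individual thickness of the same order as $\tau$, so $f'$ has thickness $\gtrsim c(m,d)\tau$, and Theorem \ref{main}\ref{intro-case:exp} applied to $f'$ yields the $\exp(C(m,d)\tau^{-m})$ bound. In case \ref{intro-case:poly-rep}, the presence of a $1$-dimensional component forces all others to have codimension exactly $2$, and the parallel pushoffs of a codimension-$2$ sphere must be arranged in a $2$-dimensional normal slice; keeping them individually $C^{1,1}$ with nearly the same thickness while also separating them along the direction of repetition is the delicate constraint. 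The expected analysis is that the effective thickness of $f'$ degrades to $\tau' \sim \tau^2$, which upon substitution into Theorem \ref{main}\ref{intro-case:poly} gives the claimed $\tau^{-2(m+1)(d-1)}$ estimate, with the factor of $2$ in the exponent being exactly the quadratic cost of pushing off a codimension-$2$ component.

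The main obstacle I expect is the cohomological pushoff-invariance step: in higher dimensions the Massey-product expansion produces many terms, and one must use the cyclic-vanishing hypothesis together with Jacobi/Leibniz relations for Massey products to cancel all but one, thereby isolating the repeated-index invariant on the nose. Once this identification is in place, the geometric pushoff construction and the invocation of Theorem \ref{main} are relatively routine.
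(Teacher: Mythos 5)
Your approach to part \ref{intro-case:poly-rep} matches the paper's: reduce to distinct indices by taking parallel copies of the repeated components inside the tube, accept a thickness degradation to $\sim\tau^2$ in codimension $2$ (the paper's Propositions \ref{prop:normal-1-3} and \ref{prop:normal-codim2}), and then invoke Theorem \ref{main}\ref{intro-case:poly}. The quadratic cost you identify is exactly what the paper establishes.

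However, your description of the ``cohomological pushoff-invariance'' step has the direction of the comparison backward. You propose to expand the Thom class of the original component as a sum of Thom classes of its pushoffs and then ``collapse'' the resulting sum of Massey products up to a combinatorial factor. What actually happens (this is the content of Theorem \ref{thm:repeated-exist}) is the opposite: one constructs a \emph{new} link with more components, shows that under the inclusion of the original smaller complement into the new larger one, the Alexander dual of each pushoff restricts to the Alexander dual of the original component, and concludes that the original (repeated-index) Massey product is the \emph{pullback} of the new (distinct-index) Massey product. There is no sum to collapse and no combinatorial factor; the identity is on the nose. Moreover ``multilinearity'' of Massey products is not literally available because of indeterminacy, which is why the paper's proof of Theorem \ref{thm:repeated-exist} is devoted to verifying case-by-case that all the relevant lower-order indeterminacies vanish after each doubling step. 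You also omit that the pushoff must be corrected to be rationally trivial in the complement of the original component (Lemma \ref{lem:double-exists}); otherwise new lower-order invariants appear and the hypotheses of Theorem \ref{main} fail for $f'$.

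For part \ref{intro-case:exp-rep}, your approach diverges from the paper's and has a real gap. You assert that in codimension at least $3$ the pushoffs can be placed at pairwise distance $\sim\tau/d$ while each maintains thickness $\sim\tau$. The paper only proves such a statement for $p<m/2$ (Proposition \ref{prop:normal-easy}); for $m/2\le p\le m-3$ no thick pushoff is constructed in the paper, and indeed the Euler class obstruction (or a higher characteristic class) could a priori force any global section of the normal bundle to wind substantially, degrading the thickness in a way you would have to control before invoking Theorem \ref{main}. The paper avoids this problem entirely: Proposition \ref{simpler} gives an integral formula for repeated-index Milnor invariants directly on the complement $S^m\setminus(N_1\cup\cdots\cup N_r)$, and the coisoperimetric estimate of Lemma \ref{lem:genIP} then yields the exponential bound without ever forming doubles. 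Your route would require a new quantitative normal-framing lemma that the paper does not contain and that may be false in some dimension ranges; the paper's direct route is both shorter and unconditional.
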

In the polynomial regime \ref{intro-case:poly-rep}, we get a different estimate from that of Theorem \ref{main}, which may not be sharp.  When $m \geq 4$, our construction of links with large homotopy invariants can easily be modified to produce examples with
\[\mubar(\ell_1,\ldots,\ell_d)=C(m,d)\tau^{-(m+1)(d-1)},\]
but it is unclear how---and in what direction!---one can bridge the quadratic gap.

Even in the classical setting $m=3$, the results are new, not covered by those of \cite{KM}.   It would be interesting to find examples of thick links with large Milnor invariants, even for the simplest nontrivial case $\mubar(1,1,2,2)$ (the invariant that demonstrates the nontriviality of the Whitehead link).

In the exponential regime \ref{intro-case:exp-rep}, we believe that the estimate is still sharp, but constructing examples presents other challenges.  First, there is some complicated numerology, depending on the parity of codimensions, involved in deciding which collections of indices give nontrivial invariants.  Secondly, one may hope to create new examples by taking the examples we constructed with large link homotopy invariants and taking connected sums of some components.  But an identity between Milnor invariants of the original link and the connected sum may only hold with some error term which must also be estimated.

In another direction, there is a connection between links with nontrivial Milnor invariants and Haefliger's construction of a family, indexed by the integers, of smoothly knotted, PL unknotted spheres \cite{Haef}.  As a next project in this direction, one would hope to harness this connection to give an estimate on Haefliger's invariant in terms of thickness.

For the final result of this paper, we apply our results in the case $m=4$ to answer a question of Freedman and Krushkal \cite[\S5]{FK}.  They constructed simplicial $n$-complexes of uniformly bounded local combinatorial complexity which embed in $[0,1]^{2n}$, but such that the thickness of any embedding is exponentially small in the number of simplices.  The proof of this fact relied on the fact a certain pair of spherical subcomplexes are always linked in any embedding.  In dimension $4$, they also constructed analogous complexes $\bar K_{q,l}$ using $q$th-order linking, and asked whether the thickness of any embedding of these complexes is likewise exponentially small in $l$.  We give a positive answer in \S\ref{S:FK}:
\begin{thmA}\label{thm-C}
    Any embedding of the $2$-complex $\bar{K}_{q,l}$ (which has $O(q+l)$ simplices and uniformly bounded local combinatorial complexity) has thickness at most $c^{-l}$, where the constant $c>1$ depends on $q$. 
\end{thmA}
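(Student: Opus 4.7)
The plan is to combine two ingredients: any embedding of $\bar K_{q,l}$ into $S^4$ contains a distinguished spherical sublink whose $q$th-order Milnor invariant is forced to grow exponentially in $l$, and Theorem \ref{main}\ref{intro-case:poly} provides a polynomial upper bound on that invariant in terms of the thickness. Matching these two growth rates forces $\tau$ to decay exponentially.

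The first step is to identify the link. By design, $\bar K_{q,l}$ contains $q$ distinguished spherical subcomplexes $\Sigma_1,\ldots,\Sigma_q$ analogous to the two linked $2$-spheres in Freedman and Krushkal's earlier construction for ordinary linking. Since $\bar K_{q,l}$ is two-dimensional, each $\Sigma_i$ is either a circle or a $2$-sphere, and the dimensional constraint \eqref{eq:dim-assumption-nr} for a nontrivial depth-$q$ Milnor invariant in $S^4$ reads
\[
p_1+\cdots+p_q = 2q-1.
\]
With $p_i\in\{1,2\}$, the only admissible profile is one circle and $q-1$ two-spheres. This places us squarely in case \ref{intro-case:poly} of Theorem \ref{main}.

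The second step is to prove the exponential lower bound on $|\mubar(1,\ldots,q)|$ for the link $f(\Sigma_1)\sqcup\cdots\sqcup f(\Sigma_q)$, for an arbitrary embedding $f$. The complex $\bar K_{q,l}$ is built by iterating, $l$ times, a gadget that nests an order-$q$ linking pattern inside itself, analogously to how iterated Bing/Whitehead doubling amplifies the classical linking number in the case $q=2$ treated by Freedman and Krushkal. Working at the cochain level, one sets up Massey-product representatives for $\mubar(1,\ldots,q)$ using the $2$-chains supplied by the combinatorics of $\bar K_{q,l}$; the recursive structure of the complex forces each nesting level to contribute a multiplicative factor $a=a(q)>1$ to the evaluation of the Massey product on the fundamental class of the $1$-sphere component. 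The vanishing of Milnor invariants indexed by proper subsequences, required to make $\mubar(1,\ldots,q)$ a well-defined integer, follows from the Brunnian structure built into the construction together with the cohomological properties collected in \S\ref{S:Milnor-props}. This yields $|\mubar(1,\ldots,q)| \geq a^l$.

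For the third step, the sublink has thickness at least $\tau$, so Theorem \ref{main}\ref{intro-case:poly} gives
\[
a^l \,\leq\, |\mubar(1,\ldots,q)| \,\leq\, C(4,q)\,\tau^{-5(q-1)},
\]
hence $\tau \leq c^{-l}$ with $c = a^{1/(5(q-1))} > 1$, depending only on $q$. The main obstacle is the second step: the geometric picture of exponential amplification at each iteration is transparent in the classical $q=2$ case of Freedman--Krushkal, but the higher-order analogue requires a careful inductive Massey-product computation, using the naturality of the Massey-product definition of $\mubar$ and the relations derived in \S\ref{S:Milnor-props} to propagate the multiplicative factor through each of the $l$ nesting levels while keeping the lower-order invariants trivial.
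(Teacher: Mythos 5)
Your high-level strategy — force an exponential lower bound on a suitable Milnor invariant from the combinatorics, then compare against the polynomial thickness bound — matches the paper's, but the specifics contain several substantive errors.

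\textbf{The component count and indices are wrong.} The complex $\bar K_{q,l}$ is built from \emph{two} copies of $K_0^2$, giving two distinguished $2$-spheres $S_1$, $S_2$, plus the circle $C_{q,l}$ at the free end of the telescope: a $3$-component link, not a $q$-component one with $q-1$ two-spheres. (The alternative complex $\bar K_{q,l}'$ has $q+1$ two-spheres plus the circle, a $(q+2)$-component link.) The homotopy class of $C_{q,l}$ in $\pi_1$ of the sphere complement is $2^l w_q$, where $w_q=[x,[x,\ldots,[x,y]\ldots]]$ is an iterated commutator of depth $q+1$ that \emph{repeats} the generator $x$. Consequently the relevant Milnor invariant has depth $q+2$ and, crucially for the original version, \emph{repeated indices}. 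You cannot invoke Theorem~\ref{main}\ref{intro-case:poly}, which requires distinct indices; the paper must use Theorem~\ref{main-repeated}\ref{intro-case:poly-rep}, giving the exponent $2(m+1)(d-1)=10(q+1)$ rather than your $5(q-1)$.

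\textbf{The lower bound is not a delicate induction.} You describe the step ``\,$|\mubar|\geq a^l$'' as requiring a careful inductive Massey-product computation that threads an amplification factor through each nesting level. In fact this is the easy part: the $l$-fold degree-$2$ telescope forces $C_{q,l}$ to represent exactly $2^l w_q$ in $\pi_1$, and Proposition~\ref{prop:almost-free} (Stallings) identifies the lower central series quotients of $\pi_1\bigl(\RR^4\setminus\bigcup_j i(S_j)\bigr)$ with free nilpotent quotients. Therefore $w_q$ is nontrivial in $\Gamma_{q+1}/\Gamma_{q+2}$, and by Proposition~\ref{prop:period=Magnus} there is a depth-$(q+1)$ homotopy period that detects it; by the homomorphism property of homotopy periods, evaluating on $2^l w_q$ produces $c\cdot 2^l$ for a nonzero constant $c=c(q)$. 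One then identifies this homotopy period with the Milnor invariant via Proposition~\ref{Milnor=period}. No intricate cochain-level induction is needed, and the multiplicative factor per level is exactly $2$ (from the degree-$2$ telescope maps), not a $q$-dependent quantity $a(q)$.

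In short: the final exponential conclusion is reachable, but your identification of the spherical sublink, its dimension profile, the parity (distinct vs.\ repeated) of the Milnor index, and the mechanism producing the $2^l$ all need to be corrected along the lines above before the argument closes.
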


\subsection{Structure of the paper}
In \S\ref{S:examples}, we construct examples to prove that the bounds in Theorem \ref{main} are asymptotically optimal.  Although some facts from later sections are used, these examples are the most readily obtained of our main results, and \S\ref{S:examples} is otherwise independent of the rest of the paper.  
Next, \S\ref{S:periods}, \S\ref{S:Milnor-and-Massey}, and \S\ref{S:repeated} provide the foundation for the proofs of our main results, together with some related auxiliary content.  
In \S\ref{S:periods}, we review homotopy periods in terms of Sullivan's minimal models, and we establish formulas evaluating homotopy periods on Whitehead products in wedges of spheres.  
Integral formulas for Milnor invariants with distinct indices are developed via Massey products in \S\ref{S:Milnor-and-Massey}, where we also establish their properties and explore their relation to Koschorke's link homotopy invariants.
Subsequently, we generalize this treatment to Milnor invariants with repeated indices in \S\ref{S:repeated}.  
There we also discuss the simplest such invariants and note how Milnor invariants determine rational isotopy classes of links modulo knotting.
The proofs of the upper bounds in Theorems \ref{main} and \ref{main-repeated} are the subject of \S\ref{S:upper-bounds}, which begins with a development of coisoperimetric inequalities.  
In \S\ref{S:FK}, we prove Theorem \ref{thm-C} on exponentially thin 2-complexes in dimension 4.

\subsection{Acknowledgments}
We thank Nir Gadish for helpful discussions related to homotopy periods.  
The second author was partially supported by the Louisiana Board of Regents Support Fund, contract number LEQSF(2019-22)-RD-A-22 and NSF grant DMS-2405370.  
The third author would like to thank the Hausdorff Institute\footnote{Funded by the Deutsche Forschungsgemeinschaft (DFG, German Research Foundation) under Germany's Excellence Strategy -- EXC-2047/1 -- 390685813.}, where considerable parts of the paper were written; he was also partially supported by NSF grant DMS-2204001 and an NSERC Discovery Grant.

\section{Thick Brunnian links with large Milnor invariants} \label{S:examples}

In this section we give examples of thick Brunnian links that demonstrate that the upper bounds of Theorem \ref{main} give the right rate of growth in all cases.  None of the later sections depend on this one, and we will use results from \S\ref{S:periods} and \S\ref{S:Milnor-and-Massey} to verify the attainment of these upper bounds.

We give two types of geometric constructions which are similar topologically; we start by describing these topological commonalities.  Suppose that $p_1,\ldots,p_d$ satisfy the dimensional assumption \eqref{eq:dim-assumption-nr}.  We take $d-1$ spheres $S^{p_1} \sqcup \cdots \sqcup S^{p_{d-1}}$ which form a trivial link in $S^m$.  Their complement is homotopy equivalent to
\[S^{q_1} \vee \cdots \vee S^{q_{d-1}},\]
where $q_i = m-p_i-1$ as previously.  We now explain how to embed a $p_d$-sphere in this complement so as to produce a large Milnor invariant.

Recall that the \strong{Whitehead product} $[\alpha,\beta]$ of two elements $\alpha \in \pi_m(X)$ and $\beta \in \pi_n(X)$ is the element of $\pi_{m+n-1}(X)$ given by the composition
\[S^{m+n-1} \xrightarrow{\text{attaching map of the top cell of }S^m \times S^n} S^m \vee S^n \xrightarrow{\alpha \vee \beta} X.\]
In particular, the Whitehead product of two elements of $\pi_1$ is their commutator; if $m=1$ and $n>1$, then $[\alpha,\beta]=\alpha \cdot \beta-\beta$, where $\cdot$ represents the action of $\pi_1$ on higher homotopy groups.  It is graded commutative, and for $*>0$, the Whitehead product on $\pi_{*+1}$ satisfies the axioms of a graded Lie algebra operation, further justifying the notation $[{\cdot},{\cdot}]$.

When there is no $1$-dimensional component, the homotopy class of our embedding will be a large multiple of the iterated Whitehead product
\[[e_1,[e_2, \cdots [e_{d-2},e_{d-1}] \cdots ]],\]
where $e_i$ corresponds to a meridian of component $i$, meaning a map of $S^{q_i}$ that has linking number 1 with $S^{p_i}$.  
(Indeed, condition \eqref{eq:dim-assumption-nr} implies that component $d$ has the same dimension as this Whitehead product, i.e., $p_d = q_1 + \dots + q_{d-1} - (d-2)$.)  The resulting link is Brunnian with a correspondingly large Milnor invariant.

In the case where $p_d=1$ (so that the formula above gives an iterated commutator), the homotopy class will not literally be a large multiple of this iterated commutator, but it will be equivalent to such a multiple from the point of view of link homotopy, as explained below.

\subsection{Polynomial construction} \label{S:poly}

We start by proving the sharpness of the upper bound in Theorem \ref{main}\ref{intro-case:poly}, where $p_1=\cdots=p_{d-1}=m-2$ and $p_d=1$.

\begin{thm}\label{thm:polynomial-thickness-mu}
  There is a constant $\epsi=\epsi(m,d)>0$ such that for any
  $n \in \NN$, there is a $d$-component link 
  $S^{m-2} \sqcup \dots \sqcup S^{m-2} \sqcup S^1 \to S^m$
  with thickness $\epsi n^{-1}$ and
  \[\mubar(1,\ldots,d)=n^{(m+1)(d-1)}.\]
\end{thm}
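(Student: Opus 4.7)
The plan is to construct the required link by iterated double cabling, then verify the Milnor invariant using the Massey product formalism of \S\ref{S:Milnor-and-Massey}.  For the base case $d=2$, view $S^m$ as the join $S^{m-2} * S^1$ and place standard great spheres in each factor, producing a link of linking number $1$.  Then cable both components: replace the $S^{m-2}$ by the connect sum of $n^2$ parallel translates (fitting in its $2$-dimensional normal bundle at pairwise spacing $\epsi/n$), and replace the $S^1$ by the connect sum of $n^{m-1}$ parallel translates (fitting in its $(m-1)$-dimensional normal bundle).  Since linking number is multilinear over connect sum, the resulting link has linking number $n^2 \cdot n^{m-1} = n^{m+1}$ with thickness $\epsi/n$.

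For general $d \geq 3$, place the $d-1$ codimension-$2$ spheres as a trivial link in $S^m$, arranged in a specific nested configuration, and embed the $S^1$ in the complement.  By the discussion preceding this theorem, the $S^1$ should represent, up to link homotopy, a large power of the iterated commutator $[e_1,[e_2,\dots,[e_{d-2},e_{d-1}]]\cdots]$ of meridians, and $\mubar(1,\ldots,d)$ equals this outer power.  The nested configuration is designed so that the double-cable construction above can be iterated at each commutator depth: each of the $d-1$ nesting levels contributes a fresh factor of $n^{m+1}$ to the Milnor invariant, for a total of $n^{(m+1)(d-1)}$.  The Brunnian property (vanishing of proper sub-Milnor invariants) follows from the iterated commutator form of the $S^1$'s homotopy class.

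To verify the Milnor invariant precisely, I apply the Massey product formula from \S\ref{S:Milnor-and-Massey}, expressing
\[\mubar(1,\ldots,d) = \int_{S^1} \langle \alpha_1, \ldots, \alpha_{d-1}\rangle,\]
where each $\alpha_i$ is a closed $1$-form representing the Kronecker dual of the $i$-th meridian class, supported near $S^{m-2}_i$.  For our nested cable construction, the Massey integrand localizes near the cable region and factorizes across the $d-1$ depth levels; evaluating gives exactly $n^{(m+1)(d-1)}$.  Thickness $\epsi/n$ with $\epsi = \epsi(m,d)$ is preserved throughout, since each cabling step only uses space of order the prior thickness.

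The main obstacle is the general case $d \geq 3$.  We must simultaneously realize the iterated commutator of meridians as an embedded $S^1$ of thickness $\epsi/n$ and arrange the $(m-2)$-spheres so the nested cabling amplifies the Milnor invariant by exactly $n^{m+1}$ at each commutator depth.  The Massey product computation, while conceptually clean, requires representatives carefully adapted to the cable geometry so that the integral gives exactly $n^{(m+1)(d-1)}$, with no extraneous lower-order terms spoiling the equality.
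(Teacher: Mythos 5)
Your base case $d=2$ and the general shape of the geometric idea (cable the $(m-2)$-spheres to degree $n^2$, cable the meridian circles to degree $n^{m-1}$, multiply to get $n^{m+1}$ per component pair) match the paper, which also builds the embedded circle by connecting together cabled meridians $C_i'$. But for $d\geq 3$ your proposal has a genuine gap in the algebraic step, and you acknowledge as much at the end. You assert that ``each of the $d-1$ nesting levels contributes a fresh factor of $n^{m+1}$'' and that the Massey product integral ``factorizes across the $d-1$ depth levels,'' but you never prove either claim, and the factorization is not self-evident: a priori the circle you build represents the iterated commutator of \emph{powers} of meridians, namely $g=[e_1^{n^{m+1}},[\cdots[e_{d-2}^{n^{m+1}},e_{d-1}^{n^{m+1}}]\cdots]]$, not a power of the commutator itself, and in the free group these are genuinely different elements. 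The missing step is an identity in the free Milnor group (the quotient of $F_{d-1}$ by relations $[e_i,w^{-1}e_iw]$): using $[e_i^a,w]=[e_i,w]^a$ in that quotient, one shows $g$ is Milnor-equivalent to $[e_1,[\cdots[e_{d-2},e_{d-1}]\cdots]]^{n^{(m+1)(d-1)}}$, from which $\mubar(1,\ldots,d)=n^{(m+1)(d-1)}$ follows by Milnor's classical result. Without that computation, you have no control over the ``extraneous lower-order terms'' you worry about, and the claimed exact evaluation of the Massey integral is unjustified.

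Two further points. First, you never explain how the embedded circle is actually constructed so as to trace out the word $g$; the paper connects the cabled meridians $C_i'$ into a single loop spelling out $g$ and then perturbs to an embedding of thickness $\epsi_4 n^{-1}$ (each $C_i'$ is traversed at most $2^{d-1}$ times). Your ``nested configuration'' is not specified enough to see that this can be done while preserving $\Theta(n^{-1})$ thickness. Second, the reduction from general $m$ to $m=3$ is nontrivial and not addressed: the Milnor-group argument lives in $\pi_1$ of the complement, and the paper handles the higher-dimensional case by choosing a $3$-sphere $P\subset S^m$ meeting the $(m-2)$-spheres in unlinked circles, homotoping the embedded circle into $P$, and invoking Proposition~\ref{Milnor=period} to see that the homotopy-period computation restricts to $P$. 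Your Massey-product formula $\int_{S^1}\langle\alpha_1,\ldots,\alpha_{d-1}\rangle$ is not the formula in Proposition~\ref{simpler} or Theorem~\ref{thm:int-formula} and is not explained, so it does not serve as a substitute for this reduction.
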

\begin{proof}
  The complement of a set of $d-1$ unlinked $(m-2)$-spheres is homotopy equivalent to a wedge of circles; in this complement, we will embed a circle which traces out an element of $\pi_1 \cong F_{d-1}$ of the form
  \[g=[e_1^{n^{m+1}},[\cdots[e_{d-2}^{n^{m+1}},e_{d-1}^{n^{m+1}}]\cdots]],\]
  where $e_i$ is the inclusion of the $i$th circle in the wedge.  We first show that this implies the Milnor invariant is as desired.
  
  Suppose first that $m=3$; we explain the general case later.  As explained in \cite{Milnor}, the Milnor invariants (and indeed the link homotopy type) of the resulting link are determined by the element induced by $g$ in the \strong{Milnor group}, a certain quotient of the fundamental group of its complement.  Specifically, the Milnor group of the trivial link is the \strong{free Milnor group}
  \[\langle e_1,\ldots,e_{d-1} \mid [e_i,w^{-1}e_iw], 1 \leq i \leq d-1, w\text{ any word}\rangle.\]
  In this group, we have
  \[[e_i^a,w] = [e_i,w]^{e_i^{a-1}}[e_i^{a-1},w]=[e_i,w][e_i^{a-1},w] = \cdots = [e_i,w]^a,\]
  so the element $g$ is equivalent to
  \[[e_1,[\cdots[e_{d-2},e_{d-1}]\cdots]]^{n^{(m+1)(d-1)}}.\]
  Therefore the Milnor invariant $\mubar(1,\ldots,d)$ of the resulting link is $n^{(m+1)(d-1)}$ \cite[p.~190]{Milnor}.

  For higher $m$, the computation reduces to that in dimension $3$: we can choose a $3$-sphere $P \subset S^m$ that intersects the $(m-2)$-spheres in unlinked circles, and our map from the circle can be homotoped into $P$.  Proposition \ref{Milnor=period} will allow us to compute the Milnor invariant $\mubar(1,\ldots,d)$ as a \emph{homotopy period} of the $d$th component in the complement of the others, that is, by applying a sequence of primitives and wedge products to the $1$-forms Alexander dual to the components.  Restricting this $m$-dimensional computation to $P$ yields precisely the $3$-dimensional computation.

  Now we give the construction of a thick link.  Throughout the construction, $\epsi_0, \dots, \epsi_4$ will be positive numbers that may depend  on $m$ and $d$, but not on $n$.  First, fix $(m-2)$-spheres $S_1,\ldots,S_{d-1}\subset S^m$, circles $C_1,\ldots,C_{d-1} \subset S^m$, and $\epsi_0>0$ such that both $S_1 \sqcup \cdots \sqcup S_{d-1}$ and  $C_1 \sqcup \cdots \sqcup C_{d-1}$ have thickness $2\epsi_0$, and such that $S_i$ has linking number $1$ with $C_i$.  In the $\epsi_0$-neighborhood $N_i$ of $S_i$ we can place $n^2$ copies of $S^{m-2}$ so that their disjoint union is $\epsi_1 n^{-1}$-thick (i.e., so that its cross-sectional volume is proportional to $n^{-2}$).  
  Finally, we take the connected sum of all these spheres, via tubes of controlled thickness between neighboring spheres, to create a single $(m-2)$-sphere $S_i'$ in $N_i$ whose projection to $S_i$ has degree $n^2$ and which has thickness $\epsi_2 n^{-1}$.  By the same token, we can embed a circle $C_i'$ in the $\epsi_0$-neighborhood of $C_i$ which winds $n^{m-1}$ times around $C_i$ with thickness $\epsi_3 n^{-1}$.  Then the linking number of $C_i'$ and $S_i'$ is $n^{m+1}$.  By connecting together copies of the various $C_i'$, we get a loop representing the word $g$, though it may not a priori be an embedding.  However, it travels along any $C_i'$ (in either direction) at most $2^{d-1}$ times, so we can perturb it to an $\epsi_4 n^{-1}$-thickly embedded loop.  Putting $\epsi=\epsi_4$ completes the proof.
\end{proof}

\begin{figure} 
    \centering
    \includegraphics[width=0.9\linewidth]{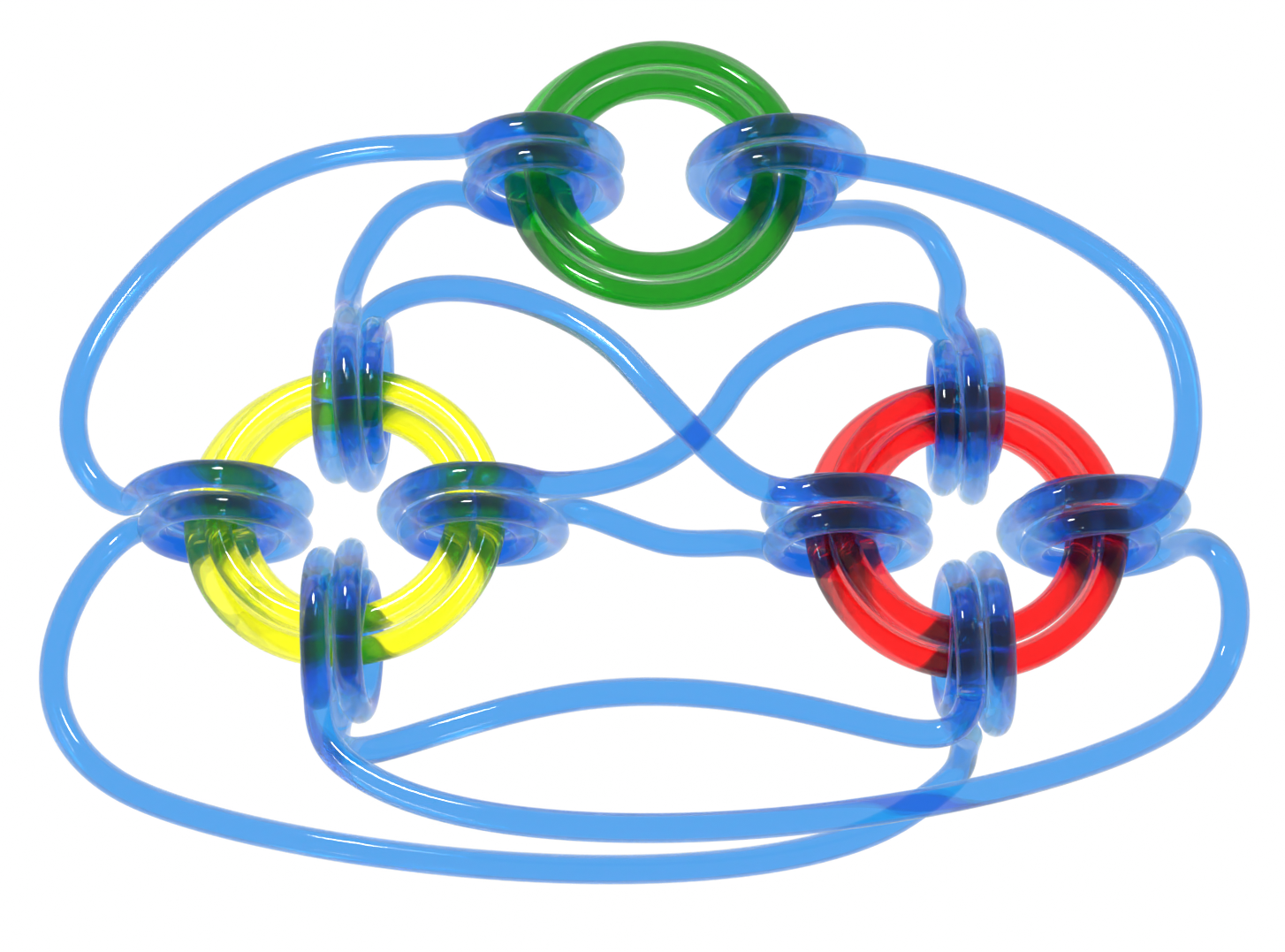}
    \caption{Illustration of the $4$--component link obtained in Theorem \ref{thm:polynomial-thickness-mu} for $d=4$, $n=2$, $m=3$, with thickness $\tau=\varepsilon n^{-1}$ and $\mubar(1,2,3,4)=n^{12}$. Components $e_1$, $e_2$ and $e_3$ are respectively in green, red and yellow, the commutator component $g=[e^{2^4}_1,[e^{2^4}_2,e^{2^4}_3]]=[e_1,[e_2,e_3]]^{2^{12}}$ is shown in blue. }
    \label{fig:commutator-link}
\end{figure}

\subsection{Exponential construction} \label{S:exp}
Our next task is to prove the sharpness of the upper bound in Theorem \ref{main}\ref{intro-case:exp}.
We start with the special case that involves the fewest components of the lowest dimensions possible: for each $n \in \NN$, we will construct a link $S^2 \sqcup S^2 \sqcup S^3 \subset S^5$ of thickness $\epsi n^{-1}$ and triple linking number $2^{n^5}$, where $\epsi>0$ is some constant.  

We start by choosing embeddings of the two copies of $S^2$: we embed them both as round spheres of radius $n^{-1}$.
We will describe an embedding of $S^3$ of thickness\footnote{We use the notation $\sim f(n)$ to mean ``proportional to $f(n)$'', i.e., a constant times $f(n)$.}
$\sim n^{-1}$ which represents $2^{n^5}[e_1,e_2]$.  Before doing so, we discuss how its homotopy class is detected.  Given a map $f:S^3 \to S^2 \vee S^2$ that is smooth away from the wedge-point, we can take the linking number of preimages of generic points in the two copies of $S^2$.  As with Hopf's original formulation of the Hopf invariant, one sees that this linking number is homotopy invariant and additive; in other words, it defines a homomorphism $\pi_3(S^2 \vee S^2) \to \ZZ$.\footnote{In \S\ref{S:periods}, we explain Sullivan's recipe for computing such homomorphisms using differential forms, generalizing Whitehead's formulation of the Hopf invariant.  This may be the easiest way of making the informal description in this section precise.}  Moreover, this homomorphism is surjective: the Whitehead product of the identity maps on the two spheres has an explicit representative which splits $S^3$ into two solid tori $S^1 \times D^2$ and maps each to the respective copy of $S^2$ via the $D^2$ coordinate; then the preimages of generic points are linked circles.  In our setting, the preimages of points are replaced by intersections of our embedded $S^3$ with ``Seifert surfaces'' of the two summands of $S^2$, i.e., the 3-balls bounded by these 2-spheres.  Thus we will construct an embedding of $S^3$ in which the linking number of these intersections is exponential.

We build this embedding out of two types of standard building blocks.  The first building block, which we call $C$, is topologically a solid torus $S^1 \times D^2$.  We embed two copies of $C$ in $S^5$ by choosing copies of $S^2$ which link with the two embedded copies of $S^2$, cutting out a small disk from each, and taking the product of what remains with a small circle.

The second building block, which we call $B$, has the topology of the complement in $S^1 \times D^2$ of a tubular neighborhood of an embedded circle that winds twice around the $S^1$ factor.  
\begin{figure}
    \centering
    \includegraphics[width=0.3\linewidth]{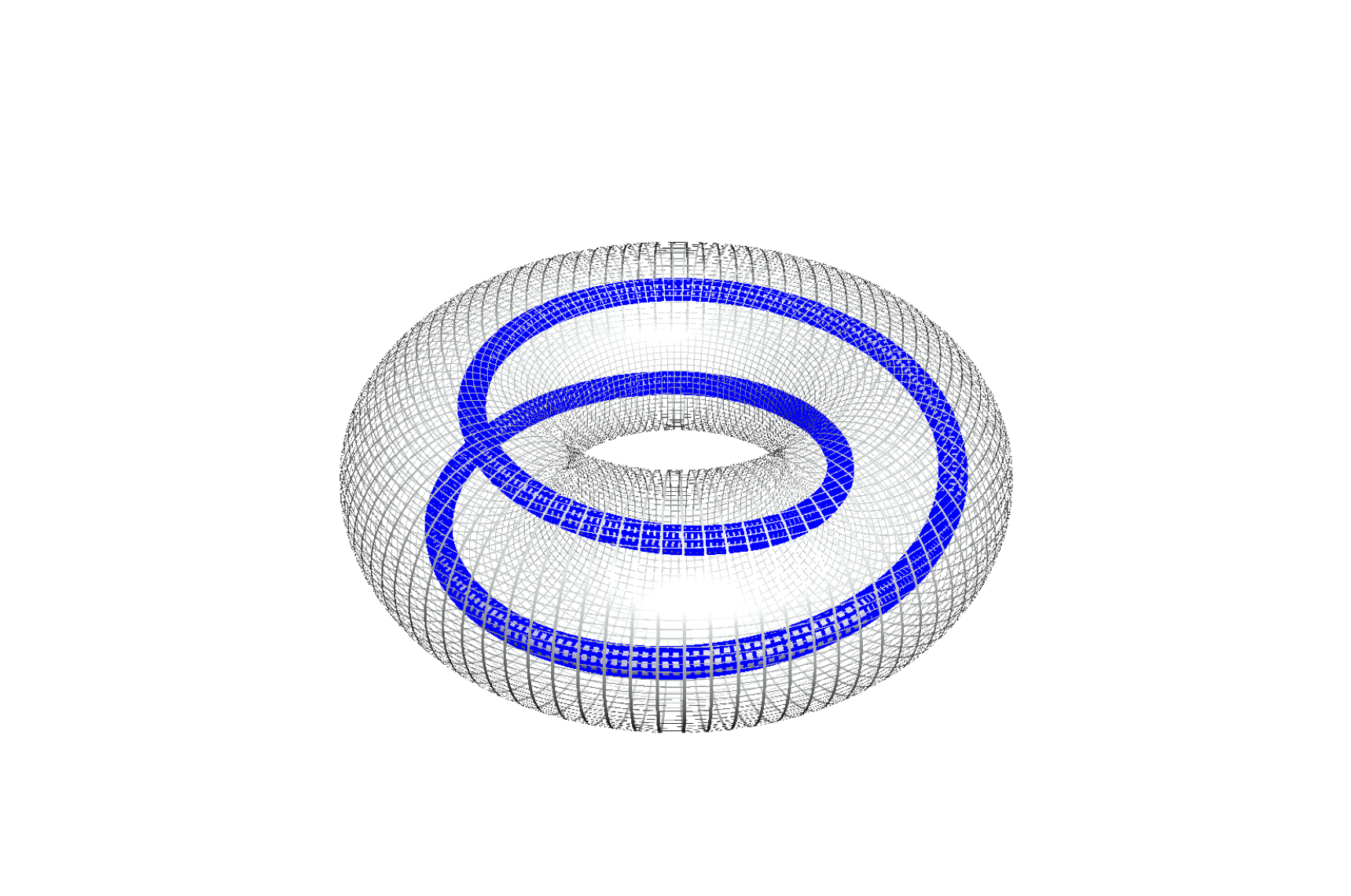}
    \caption{The building block $B$ is the complement in the solid torus of a neighborhood of the blue curve.}
    \label{fig:T12}
\end{figure}
We embed $B$ in a small ball in $\RR^5$ so that the outer torus and the inner torus have the same geometry.  (To see that this is possible, notice that it is possible even in $\RR^4$: thinking of the fourth dimension as time, we glue together the standard embedding of $B$ in $\RR^3$ with an isotopy that ``unwraps'' the inner torus to make it isotopic to the outer one.)

Now we build our embedding of $S^3$.  First decompose $S^3$ into two solid tori.  One of these is one copy of $C$.  The other will consist of $n^5$ nested copies of $B$ with the second copy of $C$ in the interior.  This nesting makes sense because the complement of $B$ in $S^1 \times D^2$ is another solid torus.  Note also that the interior copy of $C$ winds around the outer solid torus $2^{n^5}$ times.  Having already embedded the two copies of $C$, we place the copies of $B$ along a curve of thickness $n^{-1}$ which connects the two summands of $S^2$ and (with its tubular neighborhood) fills up the rest of $S^5$ along the way.  This curve has length $\sim n^4$ (since its neighborhood has cross-sectional volume $\sim n^{-4}$), so we can split it into $n^5$ sections of length $\sim n^{-1}$; each copy of $B$ is embedded in the normal tube of such a section with thickness $\sim n^{-1}$.


Now the intersection of this embedding with a 3-ball filling an $S^2$ summand is a circular fiber inside the respective copy of $C$.  These two circular fibers have linking number $2^{n^5}$, so the embedding constructed has triple linking number $2^{n^5}$, as desired.

To prove the general result, we first show that any iterated Whitehead product can be represented by an embedding.
Recall that $q_i=m-p_i-1$.
\begin{lem} \label{lem:Whitehead}
    Fix $p_1,\ldots,p_d$ such that 
    \[p:=q_1+\cdots+q_d-(d-1) \leq m-2,\]
    and consider an unlink $L=S^{p_1} \sqcup \cdots \sqcup S^{p_d} \subset S^m$.  Then there is an embedding representing
    \[[e_1, [\cdots [e_{d-1},e_{d}] \cdots ]] \in \pi_p(S^m \setminus L)\]
    whose image lies in a $(p+2)$-dimensional sphere in $S^m$, where 
    $e_i$ maps $S^{q_i}$ into the link complement as a meridian of the $i$th component $S^{p_i}$.  
\end{lem}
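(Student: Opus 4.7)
The plan is to prove this lemma by induction on $d$, with the main geometric content in the base case $d = 2$ and the inductive step invoking a slight generalization of that construction.

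\textbf{Base case $d = 2$.} Since $L = S^{p_1} \sqcup S^{p_2}$ is an unlink, I place the two components in disjoint small balls and choose disjoint meridional disks $D_i^{q_i+1}$ meeting $S^{p_i}$ transversely at points $x_i$, with $\partial D_i = e_i(S^{q_i})$. Connect $x_1$ to $x_2$ by a smooth embedded arc $\gamma$ whose interior avoids $L \cup D_1 \cup D_2$. Inside a thin tubular neighborhood of the skeleton $K = D_1 \cup \gamma \cup D_2$, I construct an embedded $(q_1+q_2-1)$-sphere representing $[e_1, e_2]$ together with a containing $(q_1+q_2+1)$-sphere $\Sigma$ (which coincides with all of $S^m$ in the edge case $p = m - 2$, and is a proper subsphere otherwise). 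The construction is based on the standard CW model for the Whitehead product attaching map $S^{q_1+q_2-1} = \partial(D^{q_1} \times D^{q_2}) \to S^{q_1} \vee S^{q_2}$: the two halves $S^{q_1-1} \times D^{q_2}$ and $D^{q_1} \times S^{q_2-1}$ are embedded as tubes that sweep meridians of $S^{p_2}$ and $S^{p_1}$ along $\gamma$, glued along their common $S^{q_1-1} \times S^{q_2-1}$. The codimension-$2$ hypothesis $p \leq m - 2$ ensures there is exactly enough room to turn this combinatorial model into a true embedding. A direct comparison with the cellular definition identifies the resulting homotopy class with $[e_1, e_2]$ in $\pi_{q_1+q_2-1}(S^m \setminus L)$.

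\textbf{Inductive step.} Assume the lemma for $d - 1$. Applying it to the sublink $L' = S^{p_2} \sqcup \cdots \sqcup S^{p_d}$ (still an unlink) yields an embedding $\psi: S^{p'} \hookrightarrow S^m \setminus L'$ representing $[e_2, [\cdots [e_{d-1}, e_d] \cdots ]]$, with image in a $(p'+2)$-sphere $\Sigma' \subset S^m$, where $p' = q_2 + \cdots + q_d - (d-2)$. Since $L$ is an unlink, I isotope $S^{p_1}$ into a small ball disjoint from $\Sigma'$, so that $\psi$ in fact lands in $S^m \setminus L$. I then apply the base-case construction with $\psi$ in place of the second meridian---the only input required is that the second factor be an embedded sphere disjoint from the small meridian $e_1$ of $S^{p_1}$, not that it itself be a meridian: connect a point on $e_1$ to a point on $\psi(S^{p'})$ by a short arc and run the same disk-and-tube recipe. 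The result is an embedding representing $[e_1, \psi] = [e_1, [e_2, [\cdots [e_{d-1}, e_d] \cdots ]]]$, whose image lies in a sphere of dimension $q_1 + p' + 1 = p + 2$, as required.

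\textbf{Main obstacle.} The bulk of the technical work is in the base-case construction: (i) verifying that the glued tube configuration gives a genuine embedding rather than an immersion with crossings, which depends essentially on the codimension-$2$ assumption $p \leq m - 2$, and (ii) identifying the homotopy class of the embedded sphere with the algebraic Whitehead product. For (ii), the cleanest route is to observe that after collapsing the $S^{q_i-1}$ factors in each half of $\partial(D^{q_1} \times D^{q_2})$, the embedding reduces to the standard attaching map of the top cell of $S^{q_1} \times S^{q_2}$ composed with the inclusion into $S^{q_1} \vee S^{q_2}$, which is the Whitehead product by definition. A secondary but important point is tracking basepoints and framings compatibly when iterating the construction in the inductive step.
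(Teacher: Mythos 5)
Your high-level plan (induction on $d$, build the embedded Whitehead product from two solid torus pieces glued along their common boundary) is the same as the paper's, but you have transferred the substantive work into a base case $d=2$ that you do not actually carry out, and the inductive step hides a real difficulty that the paper solves with an idea absent from your sketch.

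The critical gap is in the inductive step. You say to ``apply the base-case construction with $\psi$ in place of the second meridian,'' but the base-case recipe (meridional disks $D_1, D_2$ joined by an arc, tubes swept along $\gamma$) relied on $e_2$ bounding a small, standard meridional disk near $S^{p_2}$. The sphere $\psi(S^{p'})$ is not a meridian; it is an arbitrarily complicated embedded sphere inside a $(p'+2)$-sphere $\Sigma'$. To thicken it into the piece $S^{q_1-1}\times D^{p'}$ of the Whitehead product model, you need a trivialization of its normal bundle in $\Sigma'$, and it is not at all obvious that this normal bundle is trivial. The paper's proof makes exactly this observation: the normal bundle of $\psi(S^{p'})\subset\Sigma'$ has rank $2$, so it is determined by its Euler class, and the Euler class vanishes because it factors through $H^2(S^m)=0$. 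This is the key lemma-within-the-lemma, and it does not appear anywhere in your proposal. Without it, the ``same disk-and-tube recipe'' has no starting point.

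The second gap concerns your claim that the embedded sphere ``together with a containing $(p+2)$-sphere $\Sigma$'' comes out of the tube construction. You assert the existence of $\Sigma$ but do not construct it; and indeed, assembling the two tubes inside a contractible neighborhood of $K=D_1\cup\gamma\cup D_2$ does not automatically produce a $(p+2)$-dimensional subsphere containing the image. The paper's mechanism for this is specific: it extends the rank-$2$ framing to a rank-$q_1$ framing in $S^{p+1}$, restricts to boundary sphere bundles to get two embedded copies of $S^{q_1-1}\times S^{p-q_1}$ inside $S^{p+1}$, observes these are each isotopic to the Clifford torus (hence to each other), and then realizes the final $p$-sphere as the trace of this isotopy inside $S^{p+2}$. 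Your gluing ``along their common $S^{q_1-1}\times S^{q_2-1}$'' replaces this movie with an unjustified assertion. You flag both points under ``Main obstacle,'' which is honest, but as written the proposal identifies the gaps without bridging them, and the bridge (Euler class vanishing plus the Clifford-torus isotopy movie) is the actual content of the paper's proof.
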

\begin{proof}
    We proceed by induction on $d$.  If $d=1$, we can just take a sphere linked with $S^{p_1}$ lying in a $(p+1)$-dimensional subspace.  Now suppose we have an embedding $f:S^{p-q_1+1} \hookrightarrow S^m \setminus L$ representing $\alpha=[e_2, [\cdots [e_{d-1},e_{d}] \cdots ]]$ whose image lies in a $(p-q_1+3)$-dimensional sphere in $S^m$.
      
    Recall that $[e_1,\alpha]$ is represented by a map as follows.  Write
    \[S^{p_1}=D^{q_1} \times S^{p-q_1} \cup S^{q_1-1} \times D^{p-q_1+1}.\]
    Then on each of the two solid tori, the map forgets the sphere coordinate and maps the disk coordinate to the wedge of spheres via $e_1$ and $\alpha$, respectively.  We would like to construct an embedding $g:S^p \hookrightarrow S^m \setminus L$ which is homotopic to this map.
      
    The normal bundle of $f(S^{p-q_1+1}) \subset S^{p-q_1+3}$ is determined by the Euler class, because the codimension is $2$.  The Euler class is the image of the Thom class \cite[Theorem 11.3]{MS} under the composition
    \[H^2(S^m, S^m \setminus f(S^n)) \to H^2(S^m) \to H^2(f(S^n)).\]
    This shows that this class is cohomologically trivial.  So we can choose a framing (unlinked with the image of $f$, if $p-q_1+1=1$).  This extends to a framing of the $q_1$-dimensional normal bundle in $S^{p+1}$; by restricting this framing to a small sphere bundle, we get a map
    \[f':S^{q_1-1} \times S^{p-q_1+1} \to S^m \setminus L\]
    which lands in $S^{p+1}$ and, homotopically, represents a projection to the second coordinate followed by $\alpha$.  Similarly, we can get a map
    \[f'':S^{q_1} \times S^{p-q_1} \to S^m \setminus L\]
    which lands in $S^{p+1}$ and represents a projection to the first coordinate followed by $e_1$.

    Now by restricting $f'$ and $f''$ to preimages of the boundary of a small ball in $S^{p-q_1+1}$ and $S^{q_1}$, respectively, we get two embeddings of $S^{q_1-1} \times S^{p-q_1}$, each representing the normal sphere bundle of an unknotted sphere in $S^p$.  In $S^{p+1}$, these normal bundles are isotopic to the ``Clifford torus'', i.e., to the product of the standard embeddings $S^{q_1-1} \hookrightarrow \RR^{q_1}$ and $S^{p-q_1} \hookrightarrow \RR^{p-q_1+1}$, and hence to each other.  We create our map $g$ by connecting the punctured $f'$ and $f''$ via a movie of this isotopy in $S^{p+2}$.
\end{proof}

Now we establish the sharpness of the upper bound in Theorem \ref{main} \ref{intro-case:exp}:

\begin{thm}
  Suppose that $m-p_i>2$ for at least two values of $i$.  Then there is a
  constant $\epsi=\epsi(m,d)>0$ such that for any $n \in \NN$ there is a 
  link $S^{p_1} \sqcup \dots \sqcup S^{p_d} \to S^m$
  with thickness $\epsi n^{-1}$ and
  \[\mubar(1,\ldots,d)=2^{n^m}.\]
\end{thm}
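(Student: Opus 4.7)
The plan is to extend the construction already given for the case $S^2 \sqcup S^2 \sqcup S^3 \subset S^5$ to general dimensions. The first step is to produce a baseline ``multiplicity $1$'' link. Applying Lemma \ref{lem:Whitehead} yields an embedding $f_0 : S^{p_d} \hookrightarrow S^m \setminus (S^{p_1} \sqcup \cdots \sqcup S^{p_{d-1}})$ representing the iterated Whitehead product $[e_1,[\cdots[e_{d-1},e_d]\cdots]]$ whose image lies in a $(p_d+2)$-dimensional subsphere of $S^m$. I would arrange the first $d-1$ components as a trivial link concentrated, together with this baseline embedding, inside a small region of $S^m$.

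The second step is to construct a generalized doubling block $\tilde B$ playing the role of the block $B$ from the special case. Topologically, $\tilde B$ should be a codimension-$0$ submanifold of a standard tube whose complement has two diffeomorphic boundary pieces, so that a smaller ``inner configuration'' can be nested inside it. The hypothesis that $m-p_i>2$ for at least two values of $i$ (equivalently, two meridian dimensions $q_i \geq 2$) provides the extra ``free'' normal directions needed to realize $\tilde B$ as a smooth embedding in a small ball of $S^m$ for which the inner and outer boundary tubes are isometric. This parallels the unwrapping isotopy that made $B \subset \RR^5$ have geometrically identical inner and outer tori.

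In the third step, I would thread $n^m$ nested copies of $\tilde B$ along a thin tube of length $\sim n^{m-1}$ inside $S^m$, with each block of thickness $\epsi n^{-1}$. This is possible by a volume count: $S^m$ has unit volume and each block of thickness $\sim n^{-1}$ occupies $\sim n^{-m}$. Each nesting of $\tilde B$ doubles the wrapping of the $d$th component around the core configuration, yielding an embedding of $S^{p_d}$ representing $2^{n^m}$ times the iterated Whitehead product $[e_1,[\cdots[e_{d-1},e_d]\cdots]]$ in $\pi_{p_d}(S^m \setminus (S^{p_1} \sqcup \cdots \sqcup S^{p_{d-1}}))$. Finally, computing the Milnor invariant as a homotopy period via Proposition \ref{Milnor=period} (exactly as in the special case and in the proof of Theorem \ref{thm:polynomial-thickness-mu}) gives $\mubar(1,\ldots,d) = 2^{n^m}$.

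The main obstacle is the construction of $\tilde B$. In the special case, the fact that the inner and outer tori of $B$ can be made isometric used an explicit isotopy in $\RR^4$ viewed as the ``time track'' of an unwrapping in one dimension up. Extending this to general dimensions requires identifying the correct model for $\tilde B$ and producing the analogous unwrapping isotopy in the two codim-$\geq 3$ directions provided by the hypothesis, while making sure the two boundary pieces have the same intrinsic geometry. Once $\tilde B$ is built, the nesting, volume estimate, and Milnor invariant computation are more or less routine generalizations of the special case already treated.
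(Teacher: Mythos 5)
Your outline matches the paper's strategy at the level of structure, but you have left the central technical step unresolved and explicitly flagged it as such, so this is a genuine gap rather than a complete proof. The paper resolves the question of what $\tilde B$ should be by making a concrete choice: after invoking cyclic symmetry (Proposition~\ref{cyclic}) to arrange $q_1\geq 2$ and $q_i\geq 2$ for some $1<i<d$, it decomposes $S^{p_d}$ into two solid tori $T_1 = S^{p_d-q_1}\times D^{q_1}$ and $T_2 = D^{p_d-q_1+1}\times S^{q_1-1}$. The two dimensional conditions are used precisely here: $q_1\geq 2$ makes the disk factor of $T_1$ at least $2$-dimensional, and the existence of another $q_i\geq 2$ guarantees $p_d-q_1 = \sum_{k=2}^{d-1}(q_k-1)\geq 1$, so the core $S^{p_d-q_1}$ is at least a circle. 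Then $B$ is defined as $T_1\setminus T'$, where $T'$ is a tubular neighborhood of the connected sum of two $S^{p_d-q_1}$ fibers, giving a cobordism between two copies of $\partial T_1 \cong S^{p_d-q_1}\times S^{q_1-1}$. Your description of $\tilde B$ as ``a codimension-$0$ submanifold of a standard tube whose complement has two diffeomorphic boundary pieces'' stops short of saying what that submanifold is, and you also do not mention the cyclic-symmetry reindexing, which is what makes the decomposition of $S^{p_d}$ applicable to the component you are embedding.

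A secondary inaccuracy: you invoke Lemma~\ref{lem:Whitehead} to build a ``baseline multiplicity-$1$ embedding'' of $S^{p_d}$ representing the full $(d-1)$-fold Whitehead product and then insert doubling blocks into it. The paper instead embeds $T_2$ via the \emph{shorter} product $[e_2,[\cdots[e_{d-2},e_{d-1}]\cdots]]$ (using the map $f'$ from Lemma~\ref{lem:Whitehead}), embeds the innermost solid torus $C$ via $e_1\circ\pi_{D^{q_1}}$, and obtains the final class $[2^{n^m}e_1,[e_2,\cdots]]$ by intersecting with a Seifert surface for $S^{p_1}$ and counting; bilinearity of the Whitehead product then finishes the computation. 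Your scheme of first building a whole multiplicity-$1$ $S^{p_d}$ and then ``inserting'' blocks doesn't explain what structure on that sphere the blocks are doubling -- the paper's torus decomposition is what supplies that structure. The volume count ($n^m$ blocks of thickness $\sim n^{-1}$ along a tube of length $\sim n^{m-1}$) and the final appeal to Propositions~\ref{Milnor=period} and~\ref{Whitehead/period} are correct and match the paper.
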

\begin{proof}
  By Proposition \ref{cyclic}, Milnor invariants are symmetric under cyclic permutations of the indices, so we may assume without loss of generality that $p_1<m-2$ and that $p_i<m-2$ for some $1<i<d$.  Thus $q_1\geq 2$ and $q_i \geq 2$.  Recall that by condition \eqref{eq:dim-assumption-nr}, $p_d = q_1 + \dots + q_{d-1} -(d-2)$.
  
  The general construction is very similar to the special case of of $S^2 \sqcup S^2 \sqcup S^3 \to S^5$ described above.  We first embed $S^{p_1},\ldots,S^{p_{d-1}}$ as small spheres of radius $n^{-1}$.  We will next construct an embedding of $S^{p_d}$ in their complement which represents a multiple of an iterated Whitehead product, namely $2^{n^m}[e_{1}, [\cdots [e_{d-2},e_{d-1}] \cdots ]]$.  The Milnor invariant $\mubar(1,\ldots,d)$ of this link will then be $2^{n^m}$ by Proposition \ref{Milnor=period} and Proposition \ref{Whitehead/period}.
  
  As in the special case, we split $S^{p_d}$ into two solid tori: $T_1=S^{p_d-q_1} \times D^{q_1}$ and $T_2=D^{p_d-q_1+1} \times S^{q_1-1}$.

  By our dimensional assumptions, $q_1 \geq 2$ and $p_d-q_1 \geq 1$.  This
  allows us to choose a smaller solid torus $T'$ inside $T_1$ which is a tubular
  neighborhood of the connected sum of two $S^{p_d-q_1}$ fibers.  We let
  $B=T_1 \setminus T'$; as in the special case, we can embed $B$ as a cobordism between two
  isometric copies of $\partial T_1$.

  Once again, we further split $T_1$ into $n^m$ nested copies of $B$ capped off
  by a copy of $T_1$ that we call $C$.  We embed $S^{p_d}$ as follows, where for a product of spaces $X \times Y$, we write $\pi_X$ and $\pi_Y$ for the two projections:
  \begin{itemize}
  \item $T_2$ via an embedding which is a perturbation of the map
    \[[e_{2},[\cdots[e_{d-2},e_{d-1}]\cdots]] \circ \pi_{D^{p_d-q_1+1}}:D^{p_d-q_1+1} \times S^{q_1-1} \to S^m \setminus (S^{p_2} \sqcup \cdots \sqcup S^{p_{d-1}}),\]
    for example the restriction of the map $f'$ constructed in Lemma \ref{lem:Whitehead};
  \item $C$ via a perturbation of $e_{1} \circ \pi_{D^{q_1}}$; and
  \item the $n^m$ copies of $B$ along a curve of thickness $n^{-1}$ which fills
    up the rest of $S^m$, with one copy along each interval of length $n^{-1}$.
  \end{itemize}
  The embedding restricted to $T_1 \cap T_2$ lies in a small ball that does not intersect any of the other embedded spheres.  Shrinking this ball to a point, we get a map
  \[f_0:T_1/\partial T_1 \cong S^{p_d-q_1} \times S^{q_1} \to S^m \setminus S^{p_1}.\]
  The intersection of its image with a ``Seifert surface'' for $S^{p_1}$ is a
  $S^{p_d-q_1}$ fiber inside $C$; intersecting this fiber with an $S^{q_1}$ fiber in
  $T_1$ gives $2^{n^m}$ points.  Therefore the restriction of $f_0$ to an
  $S^{q_1}$ fiber is a map $S^{q_1} \to S^m \setminus S^{p_1}$ whose linking
  number with $S^{p_1}$ is $2^{n^m}$.  So the embedding of $S^{p_d}$ is homotopic
  to
  \[[2^{n^m}e_{1},[e_{2},[\cdots[e_{d-2},e_{d-1}]\cdots]]],\]
  and by the bilinearity of the Whitehead product on higher homotopy groups, we are done.
\end{proof}

\section{Homotopy periods} \label{S:periods}

In \cite[\S11]{SulLong}, Sullivan described a way to compute homotopy periods.  Briefly, a \emph{homotopy period} on a space $X$ is a homomorphism $\pi_n(X) \to \RR$ which can be evaluated on a function $f:S^n \to X$ by applying the operations of pullback, primitive, and wedge to a fixed set of differential forms on $X$.  A trivial example is the degree of a map $f:S^n \to S^n$, computed as
\[\deg f=\int_{S^n} f^*d\vol_{S^n},\]
where $d\vol_{S^n}$ is a normalized volume form.  The simplest nontrivial example is Whitehead's formula for the Hopf invariant of a map $f:S^{4n-1} \to S^{2n}$, given by
\[\operatorname{Hopf}(f)=\int_{S^{4n-1}} f^*d\vol_{S^{2n}} \wedge \Prim(f^*d\vol_{S^{2n}}),\]
where $\Prim(\omega)$ designates any primitive of the form $\omega$.

More generally, Sullivan gives a construction of homotopy periods that, when $X$ is simply connected, compute any element of $\Hom(\pi_n(X),\RR)$.  Effectively, this construction produces an obstruction to extending $f:S^n \to X$ over $D^{n+1}$, or, equivalently, an obstruction to lifting it to a map $\tilde f:S^n \to \mathcal PX$, where $\mathcal PX$ is the space of based paths in $X$.  Thus one starts with a relative minimal model for the path space fibration $\mathcal PX \to X$ and constructs an extension one degree at a time.

\subsection{Background}
In order to give the precise definition of homotopy periods, we first review Sullivan's model of rational homotopy theory, as described in \cite{SulLong,GrMo,FHT}.  This model represents spaces using differential graded algebras (DGAs).  A \strong{commutative DGA} over a field $\mathbb K$ (typically $\RR$ or $\QQ$) is a cochain complex equipped with a graded-commutative multiplication satisfying the graded Leibniz rule.  Two types of DGAs are particularly important:
\begin{itemize}
    \item the algebra of smooth differential forms $\Omega^*X$ on a manifold $X$\footnote{Smooth forms on manifolds suffice for our purposes, but in the broader theory one needs a generalization.} and
    \item connected minimal DGAs.  A DGA $\mathcal A$ is \strong{minimal} if it is free as a graded commutative algebra ($\mathcal A=\Lambda V$ where $V=\bigoplus_n V_n$ is some graded vector space\footnote{More explicitly, $\Lambda V$ is the tensor product of the polynomial algebra on even-degree generators in $V$ and the exterior algebra on odd-degree generators in $V$.}) and its differential is decomposable ($dV \subseteq \Lambda^{\geq 2} V$).  It is \strong{connected} if $H_0(\mathcal A)=\mathbb K$; a minimal DGA is connected if and only if $V_0=0$.
\end{itemize}
The key observation of Sullivan is that if $X$ is simply connected (or, more generally, nilpotent) $\Omega^*X$ has a \strong{minimal model}, i.e., a homomorphism $m_X:\mathcal M_X \to \Omega^*X$ from a minimal DGA which induces isomorphisms on homology.  Moreover, $\mathcal M_X$ is unique up to (non-unique) isomorphism, and the contravariant functor $X \mapsto \mathcal M_X$ induces an equivalence of homotopy categories from \emph{rationalized} spaces to minimal DGAs.

In order to make sense of this, we need to define the notion of a homotopy of DGA homomorphisms.  Consider two DGA homomorphisms $\ph,\psi:\mathcal M \to \mathcal A$, where $\mathcal M=\Lambda V$ is minimal.  We give two equivalent definitions of a homotopy between the two:
\begin{description}
    \item[Cylinder model] Define an ``algebraic interval'', the free DGA $\Lambda(t,dt)$ with generators $t$ and $dt$ of degree $0$ and $1$, respectively.  Then a homotopy between $\ph$ and $\psi$ is a homomorphism
    \[\Phi:\mathcal M \to \mathcal A \otimes \Lambda(t,dt)\]
    such that $\Phi|_{t=0,dt=0}=\ph$ and $\Phi|_{t=1,dt=0}=\psi$.
    \item[Free path space model] For each $n$, define $W_{n-1}$ to be a vector space in degree $n-1$ equipped with an isomorphism $s:V_n \to W_{n-1}$.  We also denote $W_{n-1}$ by $sV_n$.  Observe that $s$ extends to a derivation of degree $-1$ on $\Lambda_n V_n \otimes \Lambda_{n-1} W_{n-1}$ if we set it to $0$ on $W_{n-1}$.  Then we define the free path space of $\mathcal M$ as the algebra
    \[\mathcal M^I=\mathcal M \otimes \mathcal M \otimes (\Lambda W,d_W),\]
    where
    \[d_W(sv)=v \otimes 1 \otimes 1 - 1 \otimes v \otimes 1 - 1 \otimes 1 \otimes sdv.\]
    A homotopy between $\ph$ and $\psi$ is an extension of
    \[\ph \otimes \psi:\mathcal M \otimes \mathcal M \to \mathcal A\]
    over $\mathcal M^I$.
\end{description}

\subsection{General definition}
We now give a complete account of Sullivan's construction of homotopy periods.  Let $\mathcal A=(\Lambda V_n,d)$ be a minimal DGA over $\RR$ and let $m:\mathcal A \to \Omega^*X$ be a homomorphism (which may or may not be a minimal model).  In short, the \strong{homotopy period} $\pi_v$ corresponding to an element $v \in V_n$ sends a map $f:S^n \to X$ to the obstruction to extending a partial nullhomotopy of $f^*m$ over $v$.  Following Sullivan, we now give a variation of the algebraic path space construction above which is tailored to this situation.

One defines a model for the ``path space of $\mathcal A$'' as follows.  Define $W_n$ and $s$ as before.  Then we define
\[P\mathcal A=\left(\Lambda_n V_n \otimes \Lambda_{n-1} W_{n-1},d\right)\]
where $d$ is defined inductively so that $ds+sd=\id$ (that is, we extend $d$ to $W_{n-1}$ by setting $dsv=v-sdv$ for $v \in V_n$).

Given a smooth map $f:S^n \to X$, we get a homomorphism $f^*m:\mathcal A \to \Omega^*S^n$.  Given $v \in V_n$, the homotopy period $\pi_v$ is the obstruction to extending $f^*m$ over $sv$, that is,
\[\pi_v(f)=\int_{S^n} \ph(dsv)=\int_{S^n} (f^*mv-\ph(sdv)),\]
where $\ph$ is any extension of $f^*m$ over the $W_i$.

It is purely formal to show that this is the same as the obstruction to extending a partial nullhomotopy of $f^*m$ over $v$.
\begin{prop} \label{props-of-periods}
  Given the data above, 
  the following properties hold:
  \begin{enumerate}[(a)]
  \item The extension $\ph$ can always be defined if $\mathcal A$ is simply
    connected.
    If $\mathcal A$ is merely nilpotent, let
    $V_n=\bigcup_k V_{n,k}$ where $V_{n,k}$ is the maximal subspace such that
    $dV_{n,k}$ lies in $\Lambda_{i=1}^{n-1} V_i \otimes V_{n,k-1}$.  Then for
    $v \in V_{n,k} \setminus V_{n,k-1}$, $\ph$ is defined if the homotopy periods
    $\pi_w(f)$ are zero for $w \in V_{n,k-1}$.
  \item The homotopy period $\pi_v$ is independent of the extension $\ph$.
  \item The homotopy period $\pi_v$ is a homotopy invariant.
  \item The homotopy period $\pi_v$ is a homomorphism from its domain of definition (a subgroup of $\pi_n(X)$) to $\RR$.
  \end{enumerate}
\end{prop}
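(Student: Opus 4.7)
The plan is to deduce all four parts from a single inductive construction of $\ph$ combined with Stokes' theorem. Throughout, write $\omega_v := f^*m(v) - \ph(sdv)$, so that $\pi_v(f)=\int_{S^n}\omega_v$ when $v\in V_n$. A direct Leibniz-plus-$d^2=0$ computation, using that $\ph$ is a chain map on the portion of $P\mathcal A$ already constructed, shows $\omega_v$ is always closed. For (a), I would build $\ph$ on $sV_i$ by induction on $i$; at each step the obstruction to defining $\ph(sw)$ for $w\in V_i$ is the class $[\omega_w]\in H^i(S^n)$. For $0<i<n$ this cohomology vanishes, so a primitive exists. In the simply connected case all generators sit in degree $\geq 2$, so the induction runs unimpeded in the range $i<n$. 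In the nilpotent case, $dv$ for $v\in V_n$ may involve other elements of $V_n$, and the filtration $V_{n,k}$ records the depth of these dependencies; $V_{n,0}$ causes no trouble, and inducting on $k$ shows that the obstruction to computing $\pi_v(f)$ for $v\in V_{n,k}$ is precisely the vanishing of $\pi_w(f)$ for all $w\in V_{n,k-1}$, since each such $\pi_w(f)$ is the cohomology obstruction in $H^n(S^n)\cong\RR$ to extending $\ph$ over $sw$.

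For (b), given two extensions $\ph,\ph'$, the differences $\eta_w := \ph'(sw)-\ph(sw)$ are closed of degree $|w|-1$ and exact whenever $1<|w|<n$: write $\eta_w = d\kappa_w$. Then $\omega_v'-\omega_v = \ph(sdv)-\ph'(sdv)$ decomposes via the Leibniz rule into a sum of terms of the form $\alpha\wedge\eta_w\wedge\beta = \alpha\wedge d\kappa_w\wedge\beta$, each of which I rewrite as $\pm d(\alpha\wedge\kappa_w\wedge\beta)$ plus error terms involving $d\alpha$ and $d\beta$; the errors cancel in the aggregate because $\ph$ was assembled to satisfy the derivation identity $ds+sd=\id$ on the already-defined part of $P\mathcal A$. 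Stokes then gives $\pi_v(f) = \pi_v(f)'$. I expect this bookkeeping to be the main obstacle, particularly in the nilpotent case when $sdv$ may involve $sw$ for $w\in V_n$; if it becomes unwieldy, the cleaner backup is to interpolate $\ph$ and $\ph'$ by a homotopy $\Phi:P\mathcal A\to\Omega^*S^n\otimes\Lambda(t,dt)$ built by the same obstruction theory as in (a), whose $dt$-component automatically exhibits $\omega_v'-\omega_v$ as exact.

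For (c) and (d), I reuse the construction in (a) in new geometric situations. Given a smooth homotopy $F:S^n\times I\to X$ between $f_0$ and $f_1$, I apply the inductive extension procedure to $F^*m:\mathcal A\to\Omega^*(S^n\times I)$; the obstructions now live in $H^i(S^n\times I)\cong H^i(S^n)$ and vanish for $0<i<n$, so an extension exists, whose restrictions to the two ends compute $\pi_v(f_0)$ and $\pi_v(f_1)$ by (b). Since $\omega_v$ is closed on $S^n\times I$, Stokes gives $\pi_v(f_1)-\pi_v(f_0) = \int_{S^n\times I}d\omega_v = 0$. For (d), represent a sum in $\pi_n(X)$ as $S^n\to S^n\vee S^n \xrightarrow{f\vee g} X$ by pinching an equator; build extensions on each closed hemisphere, whose restrictions to the equatorial $S^{n-1}$ each extend the basepoint map and so agree up to exact boundary terms by (b). Splitting $\int_{S^n}\omega_v$ as a sum over the two hemispheres then yields $\pi_v(f\cdot g) = \pi_v(f)+\pi_v(g)$ after one more application of Stokes to absorb the equatorial matching error.
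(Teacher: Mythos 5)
Your proposal is correct and tracks the paper's proof closely, with some minor mechanical differences worth noting. For (a), your inductive obstruction argument is the paper's: for $i<n$ the obstructions land in $H^i(S^n)=0$, and in the nilpotent case the surviving obstruction on $W_{n-1}$ for $v\in V_{n,k}$ is precisely the vanishing of the lower periods $\pi_w(f)$ for $w\in V_{n,k-1}$. For (b), your \emph{primary} plan (direct Leibniz telescoping, writing $\eta_w=d\kappa_w$ and pushing the $d$ out) is not what the paper does, and you are right to be nervous about it: the bookkeeping is genuinely delicate because $\ph$ and $\ph'$ can already disagree on lower-degree $sw'$ appearing inside $sdv$, so the ``error terms'' do not cancel in a single Leibniz pass but require a further telescope over all generators. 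Your backup — build a homotopy $\Phi:\mathcal A\to\Omega^*S^n\otimes\Lambda(t,dt)$ extending the constant homotopy, then Stokes on the algebraic interval — is exactly the paper's argument (via \cite[Prop.~10.4]{GrMo}), and I'd promote it to the main line. For (c), your version (extend $F^*m$ over $S^n\times I$, observe $\omega_v$ is closed, apply Stokes) is equivalent to the paper's: the paper instead constructs $\ph_1$ on $S^n\times\{1\}$ directly from $\ph_0$ by fiberwise integration $\ph_1(sv)=\ph_0(sv)+\int_0^1 f_t^*mv$, but since $\int_{S^n\times I}=\int_{S^n}\int_0^1$, the two formulations are the same computation. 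For (d), the paper simply notes that $\mu^*(\ph_1\vee\ph_2)$ is an extension for the pinch map $\mu:S^n\to S^n\vee S^n$, which is your hemispherical argument stated without the explicit equatorial matching; if you do the hemispheres by hand, it's cleanest to arrange both extensions to vanish identically near the basepoint (which maps the collar of the equator), so there is no matching error to absorb.

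One small point both you and the paper gloss over: when the obstruction group is $H^{i-1}(S^n)$ with $i=1$ (possible when $\mathcal A$ is nilpotent but not simply connected), this is $H^0(S^n)\cong\RR\neq 0$; the uniqueness in degree $0$ is rescued by the basepoint condition, which forces the constant discrepancy to vanish. Worth a sentence if you want an airtight write-up.
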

\begin{proof}
  For part (a), notice that for $i<n$, there is no obstruction to
  extending $f^*m$ to $W_{i-1}$.  Thus the only relevant obstruction is that to
  extending $f^*m$ over $W_{n-1}$.

  Part (b) is proved using homotopy theory of DGAs.  Suppose $\ph$ and
  $\psi$ are two different extensions.  We attempt to extend the constant
  homotopy
  \[f^*m \otimes 1:\mathcal A \to \Omega^*S^n \otimes \Lambda(t,dt)\]
  to a homotopy from $\ph$ to $\psi$.  By \cite[Prop.~10.4]{GrMo}, for $w \in W_{i-1}$, the obstruction to
  extending the homotopy to $w$ lies in
  \[H^i(\Omega^*S^n \otimes \Lambda(t,dt), \Omega^*S^n \oplus \Omega^*S^n;\RR) \cong H^{i-1}(\Omega^*S^n; \RR),\]
  which is zero since $i \leq n$.  Therefore there is a homotopy $\Phi$ from
  $\ph$ to $\psi$.  Then
  \[\int_{S^n} \psi(dsv)=\int_{S^n} \left(\ph(sdv)-d{\textstyle\int_0^1} \Phi(dsv)+{\textstyle\int_0^1} d\Phi(dsv)\right)=\int_{S^n} \ph(dsv).\]

  For part (c), to show that $\pi_v$ is a homotopy invariant, consider a homotopy
  $f_t:[0,1] \times S^n \to X$, and let $\ph_0$ be an extension of $f_0^*m$ over
  the relevant $W_i$.  Then an extension of $f_1^*m$ is given inductively by
  \[\ph_1(sv)=\ph_0(sv)+\int_0^1 f_t^*mv,\]
  where $\int_0^1$ denotes the fiberwise integral.  From the identity
  \[d\int_0^1 f_t^*\omega+\int_0^1 df_t^*\omega=f_1^*\omega-f_0^*\omega,\]
  it follows that this is indeed a DGA homomorphism.  Then
  \begin{align*}
    \pi_v(f_1) &= \int_{S^n} (f_1^*mv-\ph_1(sdv)) \\
               &= \int_{S^n} \left(f_1^*mv-\ph_0(sdv)-{\textstyle\int_0^1} f_t^*mdv\right) \\
               &= \int_{S^n} \left(f_0^*mv-\ph_0(sdv)-d{\textstyle\int_0^1} f_t^*mv\right) \\
    &= \pi_v(f_0).
  \end{align*}

  Finally, for part (d), to show that $\pi_v$ is a homomorphism, notice that if
  $\mu:S^n \to S^n \vee S^n$ is the co-H-space operation on $S^n$, then given
  extensions $\ph_1$ of $f_1^*m$ and $\ph_2$ of $f_2^*m$ over the $W_i$,
  $\mu^*(\ph_1 \vee \ph_2)$ is an extension of $\mu^*(f_1 \vee f_2)^*m$.
\end{proof}

\begin{ex}
  We now demonstrate that the homotopy period associated with a map
  $f:S^{4n-1} \to S^{2n}$ (which is unique up to a multiplicative constant) 
  is Whitehead's formula for the Hopf invariant given above.  The minimal model
  of $S^{2n}$ is
  \[\mathcal A=\left(\Lambda(a^{2n},b^{4n-1}),da=0,db=a^2/2\right),\]
  and a homomorphism $m:\mathcal A \to \Omega^*S^{2n}$ is given by
  $m(a)=d\vol_{S^{2n}}$ and $m(b)=0$.  Then we can extend $f^*m$ to $sa$ by
  assigning it any primitive of $f^*d\vol_{S^{2n}}$, and the homotopy period
  $\pi_b$ is given by
  \begin{align*}
    \pi_b(f) &= \int_{S^{4n-1}}(f^*mb-\ph(s(a^2/2))) \\
           &= \int_{S^{4n-1}} \ph(a \cdot s(a)) \\
           &= -\int_{S^{4n-1}} f^*d\vol_{S^{2n}} \wedge \Prim(f^*d\vol_{S^{2n}})
             = -\operatorname{Hopf}(f).
  \end{align*}
\end{ex}

\subsection{Homotopy periods for wedges of spheres} \label{S:wedges}
Now we specialize to the case that the space $X$ is homotopy equivalent to a wedge of spheres:
\[X \simeq S^{q_1} \vee \cdots \vee S^{q_s}.\]
Suppose first that $X$ is simply connected, i.e., $q_i>1$ for each $i$.  Then by the Hilton--Milnor theorem \cite{Hilton}, $\pi_*(X) \otimes \QQ$ is the free graded Lie algebra on generators of degrees $q_1,\ldots,q_s$; the graded dual of this space is the vector space of indecomposable generators of the minimal model.  Moreover, the pairing can be computed using a formula proved by Andrews and Arkowitz \cite[Theorem 6.1]{AA} relating Whitehead products and the differential in the minimal model: for $\alpha \in \pi_k(X),\beta \in \pi_\ell(X)$ and $x$ a $(k+\ell-1)$-dimensional indecomposable in the minimal model of $X$,
\begin{equation} \label{eq:AA}
  x([\alpha,\beta])=\sum_i C_i((-1)^{k \ell}y_i(\alpha)z_i(\beta)+y_i(\beta)z_i(\alpha)),
\end{equation}
where $i$ runs over quadratic terms $C_iy_iz_i$ of $dx$.  The differential (which, for wedges of spheres, turns out to be purely quadratic) can therefore be thought of as a comultiplication on $V^*:=\Hom(\pi_*(X),\RR)$ dual to the Whitehead product, giving the vector space of indecomposables the structure of a cofree Lie coalgebra.

These dual objects were studied in more detail by Sinha and Walter \cite{SW1,SW2}, who gave combinatorial descriptions of bases and the pairing between them.  Write $\overline x=(-1)^{\deg x}x$.  Then $V^*$ is spanned by elements of the form $x_I$, where $I=(i_1,\ldots,i_r)$ is a multiindex, $x_{(i)}$ is dual to the inclusion of $S^{p_i}$, and
\[dx_I=\sum_{k=1}^{r-1} \overline{x_{(i_1,\ldots,i_k)}}x_{(i_{k+1},\ldots,i_r)}.\]
We say $r$ is the \strong{depth} of the homotopy period $x_I$.\footnote{Outside of \S\ref{S:periods}, we use $d$ for the depth of the relevant homotopy periods and $r$ for the number of link components.  In \S\ref{S:Milnor-and-Massey}, these two quantities are equal.}  
Moreover \cite[Proposition~3.21]{SW1}, the $x_I$ satisfy the \emph{shuffle relations}
\[\sum_\sigma (-1)^{\kappa(\sigma)}x_{\sigma(I,J)}=0,\]
where $\sigma$ runs over all \emph{shuffles} of $I$ and $J$, i.e., sequences in which $I$ and $J$ are subsequences and which are the union of these two subsequences, and the \emph{Koszul sign} $\kappa(\sigma)$ is given by
\[\kappa(\sigma)=\sum_{i \in I, j \in J: \sigma(j)<\sigma(i)} (q_i-1)(q_j-1).\]

Applying the Andrews--Arkowitz formula recursively, one sees for example that $x_{(1,\ldots,s)}$ pairs nontrivially with $[\iota_1,[\iota_2,\cdots[\iota_{s-1},\iota_s] \cdots ]]$, where $\iota_i \in \pi_{q_i}(X)$ is the inclusion of $S^{q_i}$.  More generally, if $S \subset \Sigma_s$ is the subgroup of permutations of $\{1,\ldots,s\}$ fixing $s$, then $\{x_\sigma:=x_{\sigma(1),\ldots,\sigma(s-1),s}: \sigma \in S\}$ and
\[\{\iota_\sigma:=[\iota_{\sigma(1)},[\iota_{\sigma(2)},\cdots[\iota_{\sigma(s-1)},\iota_s] \cdots ]] : \sigma \in S\}\]
form dual bases for subspaces of $V^*$ and $\pi_*(X) \otimes \RR$.  See for example work of Walter \cite[Example~2.19]{Walter} for further calculations.

The Sullivan minimal model can be constructed for any space $X$, although it is no longer necessarily of finite type, nor is it clear exactly what homotopical information it preserves (beyond nilpotent quotients of the fundamental group) when the fundamental group of $X$ is not nilpotent.  In particular, we can consider the Sullivan model of a non--simply connected wedge of spheres (or another space which is related to it by a zigzag of rational homology equivalences).  It has essentially the same structure, and in particular its indecomposables still form a cofree Lie coalgebra \cite[Theorem 1]{FH}, although the free Lie algebra (generated by the $s$ spheres) to which it is dual is somewhat harder to interpret geometrically.

\subsection{An alternate computation}
Now, again following Sullivan, we modify the construction of homotopy periods to get one that is equivalent, but somewhat more general, using an alternate model for the path space.  We use the same additional generators $W_n=sV_n$, but instead of defining $d$ so that $ds+sd=\id$, we ensure $ds+sd=\Delta$, where $\Delta$ is some isomorphism in positive degrees.  Proposition \ref{props-of-periods} still holds after this modification.  In the case of a wedge of spheres 
$X \simeq S^{q_1} \vee \cdots \vee S^{q_s}$, for a fixed $I=(i_1,\ldots,i_r)$, we can define $d$ so that for every $1 \leq k \leq r$,
\begin{align}
  dsx_{i_k,\ldots,i_r} &= x_{i_k,\ldots,i_r}-\sum_{\ell=k}^{r-1} x_{i_k,\ldots,i_\ell}(sx_{i_{\ell+1},\ldots,i_r}) \\
  \Delta x_{i_k,\ldots,x_r} &= x_{i_k,\ldots,i_r}+\sum_{\ell=k}^{r-1} (s\overline{x_{i_k,\ldots,i_\ell}})x_{i_{\ell+1},\ldots,i_r}.
\end{align}
It suffices to check that this satisfies $d^2=0$; one can then extend $\Delta$ arbitrarily to the rest of the indecomposables, for example by setting it to be the identity on a complementary subset.  Now given a homomorphism $m:\mathcal A \to \Omega^*X$ and a map $f:S^n \to X$, where $n=q_{i_1}+\cdots+q_{i_r}-r+1$, 
an alternate computation of the homotopy period $\pi_{x_{i_k,\ldots,i_r}}(f)$ is given by
\begin{equation} \label{eq:Delta}
  \pi_{x_{i_1,\ldots,i_r}}(f)=\int_{S^n} \ph(dsx_{i_1,\ldots,i_r})=\int_{S^n} \Bigl(f^*mx_{i_1,\ldots,i_r}-\sum_{k=1}^{r-1} f^*mx_{i_1,\ldots,i_k}\ph(sx_{i_{k+1},\ldots,i_r})\Bigr),
\end{equation}
with $\ph$ again constructed inductively so that
\[\ph(sx_{i_k,\ldots,i_r})=\Prim(\ph(\Delta v-sdv))=\Prim\Bigl(f^*mx_{i_k,\ldots,i_r}-\sum_{\ell=k}^{r-1} f^*mx_{i_k,\ldots,i_\ell}\ph(sx_{i_{\ell+1},\ldots,i_r})\Bigr).\]

\subsection{Wedges of circles and generalizations}
Now suppose that $X$ is the complement of a collection of $k$ embedded copies of $S^{m-2}$ inside $S^m$, for any $m \geq 4$.   Consider a map $i:\bigvee_k S^1 \to X$ which takes each circle to a small loop linked with the corresponding $S^{m-2}$.
Let $F_k$ denote the free group on $k$ generators, and for any group $\Gamma$, let $\Gamma_r = [\Gamma, \Gamma_{r-1}]$ denote the $r$th stage in the lower central series of $\Gamma$, starting with $\Gamma_1=\Gamma$.
\begin{prop} \label{prop:almost-free}
    The map $i$ induces an isomorphism $F_k/(F_k)_r \to \pi_1(X)/(\pi_1(X))_r$ on the quotients of the fundamental groups by any stage of their lower central series.
\end{prop}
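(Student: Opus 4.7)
My plan is to invoke Stallings' theorem on the lower central series: a group homomorphism inducing an isomorphism on $H_1(-;\ZZ)$ and a surjection on $H_2(-;\ZZ)$ induces isomorphisms on every nilpotent quotient $G/G_r$. Applied to $i_* \colon F_k = \pi_1(\bigvee_k S^1) \to \pi_1(X)$, this yields the desired conclusion immediately.

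So I need to verify the two homological hypotheses for $i$. For $H_1$, I will use Alexander duality to identify $H_1(X) \cong \tilde H^{m-2}(\bigsqcup_k S^{m-2}) \cong \ZZ^k$. Under this isomorphism the class of the small meridian loop $\gamma_j$ linked with the $j$th component corresponds to the generator of the $j$th summand; this is the standard interpretation of the linking number as the Alexander-duality pairing. Hence $i_*$ sends the $k$ wedge generators to a basis of $H_1(X)$ and is an isomorphism on $H_1$. For $H_2$, Alexander duality gives $H_2(X) \cong \tilde H^{m-3}(\bigsqcup_k S^{m-2})$. The reduced cohomology of an $(m-2)$-sphere is concentrated in degree $m-2$, and the hypothesis $m \ge 4$ rules out both $m-3 = 0$ and $m-3 = m-2$, so this group vanishes. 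The surjection hypothesis then holds trivially, since $H_2(\bigvee_k S^1) = 0$ as well.

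I do not expect a serious obstacle: Stallings does the real work, and the point of the hypothesis $m \ge 4$ is precisely to force $H_2(X) = 0$. In the classical case $m = 3$, by contrast, $H_2$ of the complement of a $k$-component link is typically $\ZZ^{k-1}$, and correspondingly the fundamental group of the complement need not be free nor agree with $F_k$ on all nilpotent quotients---one must pass instead to the Milnor group, imposing extra commutator relations. The present hypothesis sidesteps this classical subtlety entirely, and it is also what makes the codimension-$2$ case in higher dimensions behave more simply than its classical counterpart.
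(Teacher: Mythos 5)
Your proof is correct and follows essentially the same route as the paper: Alexander duality to identify $H_1(X)\cong\ZZ^k$ (with the meridians mapping to a basis) and to show $H_2(X)=0$ when $m\ge 4$, followed by Stallings' theorem. The paper states this in two sentences, citing \cite[Theorem 5.1]{Sta}; your version simply spells out the duality computation in more detail.
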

\begin{proof}
    By Alexander duality, $X$ has $H_1$ generated by the Alexander duals of the $S_j$ and $H_2 \cong 0$, so $i$ induces isomorphisms on both $H_1$ and $H_2$.  By a theorem of Stallings \cite[Theorem 5.1]{Sta}, it induces isomorphisms on all lower central series quotients.
\end{proof}
We consider homotopy periods of loops in $X$; although $X$ is not a nilpotent space and does not have a minimal model, we can still use homotopy periods to compute invariants of the fundamental group.

When $X$ is such that $\Gamma=\pi_1(X)$ is a free group, the homotopy periods on $X$ as redefined in \eqref{eq:Delta} are exactly the \emph{letter-linking invariants} described by Monroe and Sinha \cite{MonSin}, who show that they are equal to the corresponding coefficients of the Magnus expansion \cite{Magnus2}; see also \cite{Gadish,GOSW} for generalizations.  In particular, \cite[Theorem 5.1]{MonSin} states that homotopy periods of depth $r$ are dual to $(\Gamma_r/\Gamma_{r+1}) \otimes \QQ$.

Now suppose $\Gamma$ is not free, but only admits a map from $F_k$ 
which induces isomorphisms on lower central series quotients $F_k/(F_k)_r \to \Gamma/\Gamma_r$ for all $r$.  In this case, homotopy periods enjoy the same properties:
\begin{prop} \label{prop:period=Magnus}
    Homotopy periods of depth $r$ are dual to $(\Gamma_r/\Gamma_{r+1}) \otimes \QQ$, where $\Gamma=\pi_1(X)$.  Moreover, their values on elements are given by coefficients of the Magnus expansion.
\end{prop}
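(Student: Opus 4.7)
The plan is to reduce to the free case established in \cite[Theorem 5.1]{MonSin}, using that the map $i\colon \bigvee_k S^1 \to X$ induces an isomorphism $F_k/(F_k)_{r+1}\xrightarrow{\sim}\Gamma/\Gamma_{r+1}$ for every $r$ (by hypothesis) and that the homotopy-period construction is natural under pullback of forms. The two key ingredients are naturality of $\pi_v$ under $i$ and the fact that depth-$r$ periods factor through $\Gamma/\Gamma_{r+1}$.

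\emph{Naturality and factoring.} A depth-$r$ period $\pi_v$ on $X$ is defined by a DGA homomorphism $m\colon \mathcal A \to \Omega^* X$; pulling back along $i$ gives $i^*m\colon \mathcal A \to \Omega^*(\bigvee_k S^1)$, and since the formula \eqref{eq:Delta} is built from $d$, $\wedge$, and primitives---all of which commute with pullback---one has $\pi_v^X(i\circ g) = \pi_v^{\bigvee_k S^1}(g)$ for every loop $g$ in the wedge. By Proposition \ref{prop:almost-free} applied with $r=2$, the forms $m(x_{(i)})$ may be taken as closed 1-forms representing a basis of $H^1(X;\QQ)$ Kronecker-dual to the images of the free generators, so that $i^*m$ gives the standard model on the wedge. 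Unrolling \eqref{eq:Delta} for a loop $\gamma$ and these closed 1-forms presents $\pi_v^X(\gamma)$ as a Chen iterated integral of length $r$, which vanishes on $\Gamma_{r+1}$ by the standard inductive argument using the shuffle / integration-by-parts identity. Consequently, $\pi_v^X$ descends to a homomorphism $\Gamma/\Gamma_{r+1}\to \RR$.

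\emph{Conclusion and main obstacle.} Given $\gamma \in \Gamma$, pick $w\in F_k$ with $i_*(w)\equiv \gamma \pmod{\Gamma_{r+1}}$ using the hypothesis. Factoring plus naturality give
\[
\pi_v^X(\gamma) \;=\; \pi_v^X(i_*w) \;=\; \pi_v^{\bigvee_k S^1}(w),
\]
and by \cite[Theorem 5.1]{MonSin} the right-hand side equals the corresponding Magnus coefficient of $w$. Letting $v$ range over a basis of depth-$r$ generators transports the perfect pairing on the free side to one of depth-$r$ periods with $(\Gamma_r/\Gamma_{r+1})\otimes \QQ$. The principal obstacle is the factoring step: since $X$ is not nilpotent, the Sullivan machinery of \S\ref{S:periods} does not strictly apply at the level of $\pi_1$, so making precise the identification of \eqref{eq:Delta} with a Chen iterated integral---and thus its vanishing on higher commutator subgroups---requires either invoking the bar-construction version of Chen's $\pi_1$-de Rham theorem or carrying out the inductive vanishing argument directly from the shuffle relations satisfied by the relevant forms.
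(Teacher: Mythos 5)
Your proposal correctly identifies the crux of the matter --- that one must show a depth-$r$ homotopy period on $X$ descends to $\Gamma/\Gamma_{r+1}$ --- but it does not actually close that step, only flags it as the ``principal obstacle'' and sketches two possible workarounds (Chen's bar-construction theorem, or a hands-on induction via shuffle identities) without carrying either out. The naturality of the period formula under $i^*$ and the reduction to \cite[Theorem~5.1]{MonSin} are fine, and your observation that unrolling \eqref{eq:Delta} for a loop yields an iterated integral of length $r$ is sound; but asserting that such integrals ``vanish on $\Gamma_{r+1}$ by the standard inductive argument'' is precisely the point at issue, and you explicitly acknowledge that Sullivan's machinery does not apply directly to the non-nilpotent $X$. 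As written, then, the proposal has a genuine gap at its central step.

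The paper closes this gap by a different and cleaner route: it introduces a nilmanifold $N$ realizing $K(\Gamma/\Gamma_{r+1},1)$ and uses the composite $\bigvee_k S^1 \xrightarrow{i} X \xrightarrow{f} N$, which on $\pi_1$ realizes the quotient $F_k \to F_k/(F_k)_{r+1} \cong \Gamma/\Gamma_{r+1}$. Because $f\circ i$ induces an isomorphism on the first $r$ stages of the minimal model, the depth-$\leq r$ homotopy periods on the wedge can be computed in $N$ and then pulled back; since $N$ is a nilmanifold (hence aspherical and nilpotent), the period of a loop in $N$ depends only on its class in $\pi_1(N) = \Gamma/\Gamma_{r+1}$. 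This bypasses the need for Chen's $\pi_1$-de~Rham theorem or a direct vanishing argument entirely: the nilpotency is injected by mapping forward to $N$, not by analyzing the iterated integrals on $X$ themselves. Your route via Chen's theorem could presumably also be made to work, but it is heavier machinery, and the paper's nilmanifold factorization is the more economical choice for the depth-$r$ statement at hand.
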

\begin{proof}
    Define a pair of homomorphisms
    \[F_k \xrightarrow{i_*} \Gamma \to \Gamma/\Gamma_{r+1} \cong F_k/(F_k)_{r+1}\]
    whose composition is the quotient map, induced by maps
    \[\bigvee_k S^1 \xrightarrow{i} X \xrightarrow{f} N,\]
    where $N$ is a nilmanifold.  Since $f \circ i$ induces an isomorphism on the first $r$ stages of the minimal model, any homotopy period in $\bigvee_k S^1$ of depth up to $r$ can be computed by doing the computation in $N$ and then pulling back.  Thus the value of a homotopy period of depth $r$ on an element of $\Gamma$ depends only on its image in $\Gamma/\Gamma_{r+1}$.

    The statement about the Magnus expansion follows from the discussion immediately before the Proposition statement.
\end{proof}

\subsection{Pairing with Whitehead products in the general case}
For the sake of \S\ref{S:exp} and \S\ref{S:Massey-Milnor-Koschorke}, we would like to apply an analogue of the Andrews--Arkowitz formula \eqref{eq:AA} in a more general context, where some of the $q_i$ are $1$ and others are not.  To eschew overly technical hypotheses, we will give a result narrowly tailored to our situation.  We suspect that one can make a more general statement by relating the homotopy groups of the wedge to the homotopy Lie algebra discussed in \cite{FH}, but we do not pursue it here.
\begin{prop} \label{Whitehead/period}
    Let $I=(1,\ldots,r)$ and let $\alpha_i$ be the inclusion of $S^{q_i}$ in $S^{q_1} \vee \cdots \vee S^{q_r}$.  Then for a permutation $\sigma$ of $\{1,\ldots,r-1\}$,
    \[x_I([\alpha_{\sigma(1)},[\alpha_{\sigma(2)},\cdots[\alpha_{\sigma(r-1)},\alpha_r] \cdots]])=\pm 1\quad\text{if }\sigma=\id,\qquad 0\quad\text{otherwise}.\]
\end{prop}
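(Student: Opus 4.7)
My plan is to argue by induction on the depth $r$, using the formula \eqref{eq:Delta} for homotopy periods together with an Andrews--Arkowitz-type identity relating the Sullivan model differential to Whitehead products.

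For the base case $r=2$, the permutation is forced to be the identity, so the claim reduces to $x_{(1,2)}([\alpha_1,\alpha_2]) = \pm 1$, which is essentially Whitehead's classical computation of the Hopf invariant of a Whitehead product. Using formula \eqref{eq:Delta} with $I=(1,2)$ on a standard geometric representative of $[\alpha_1,\alpha_2]$, the relevant integral evaluates to $\pm 1$ by a direct calculation insensitive to whether $q_1$ or $q_2$ equals $1$.

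For the inductive step, I would decompose $w_\sigma := [\alpha_{\sigma(1)},[\alpha_{\sigma(2)},\cdots[\alpha_{\sigma(r-1)},\alpha_r]\cdots]] = [\alpha_{\sigma(1)}, w_{\sigma'}]$, where $\sigma'$ is the restriction of $\sigma$ to $\{2,\ldots,r-1\}$. Applying the Andrews--Arkowitz formula \eqref{eq:AA} to the quadratic decomposition $dx_I = \sum_{k=1}^{r-1} \overline{x_{(1,\ldots,k)}} x_{(k+1,\ldots,r)}$, the only contribution that can survive comes from the term with $k=1$ and $\sigma(1)=1$: the factor $\alpha_{\sigma(1)}$ is dual to $x_{(\sigma(1))}$, and the only single-index factor appearing in the quadratic terms of $dx_I$ is $x_{(1)}$. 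When $\sigma(1) = 1$, the computation reduces (up to sign) to $x_{(2,\ldots,r)}(w_{\sigma'})$, to which the inductive hypothesis applies, giving the desired $\pm 1$ if and only if $\sigma' = \id$, that is, $\sigma = \id$.

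The main obstacle is making the inductive step rigorous when some $q_i = 1$, since the Andrews--Arkowitz formula as originally stated in \cite{AA} applies to simply connected spaces. One resolution is to invoke the general fact, already cited in \S\ref{S:wedges} via \cite{FH}, that the indecomposables of the Sullivan model of a wedge of spheres form a cofree Lie coalgebra even in the non-simply-connected case, so that the structural identity relating the model differential to the Whitehead product persists in the appropriate form. Alternatively, one can appeal to Proposition \ref{prop:period=Magnus}: since $w_\sigma$ with $r \geq 2$ lies in $\pi_n(X)$ for $n \geq 2$, the depth-$r$ homotopy period $x_I$ factors through a nilpotent rational Postnikov approximation of $X$ through degree $n+1$, and this approximation is simply connected above degree $1$ in the relevant range, where the original Andrews--Arkowitz formula applies directly.
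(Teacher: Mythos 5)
Your high-level plan---induction on depth via an Andrews--Arkowitz-type quadratic decomposition of $dx_I$---is the same as the paper's, and the bookkeeping you describe (only the $k=1$ term of $dx_I=\sum_k \overline{x_{(1,\ldots,k)}}x_{(k+1,\ldots,r)}$ can survive, and it forces $\sigma(1)=1$) is essentially the content of the paper's inductive lemma. But the step you yourself flag as the ``main obstacle'' is where the actual work lies, and neither of your two resolutions closes it.

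The paper is explicit that formula \eqref{eq:AA} gives the proposition ``immediately'' only when \emph{all} $q_i\geq 2$; in the mixed case (some $q_i=1$), the authors do not invoke any black box. Instead they prove a bespoke identity by hand: they take the explicit clutching representative of $[\alpha_j,\beta]$, replace $\beta^*m$ by a homotopic homomorphism that vanishes on all $x_J$ for short $J$, and trace through the obstruction integral \eqref{eq:Delta} to show that exactly one term survives and reduces to a linking number. This is genuinely a calculation, not a citation.

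Your Resolution~1 (appeal to \cite{FH}) is exactly the route the paper mentions but explicitly declines to take (``We suspect that one can make a more general statement\dots but we do not pursue it here''); citing it as if it already supplies the needed AA-type formula for the non--simply-connected wedge is a gap, not a proof. Your Resolution~2 is incorrect as stated: when some $q_i=1$, $\pi_1(X)$ is a nonabelian free group, $X$ is not nilpotent, and the higher homotopy groups of $X$ carry a nontrivial (and non-nilpotent) $\pi_1$-action. There is no ``nilpotent rational Postnikov approximation of $X$ through degree $n+1$'' that is both simply connected above degree $1$ and receives the Whitehead product $w_\sigma$ while detecting $x_I$; Proposition~\ref{prop:period=Magnus} handles only depth-$r$ periods of \emph{degree-$1$} elements via a nilmanifold target, and that argument does not transfer to elements of $\pi_n(X)$ with $n\geq 2$. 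So the heart of the proof---the AA-type identity in the presence of circle summands---remains unestablished in your proposal, whereas the paper supplies it by direct computation.
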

When each of the $q_i$ is at least $2$, this follows immediately from formula \eqref{eq:AA}.  In the general case, it follows by induction from the following lemma:
\begin{lem} Let $I=(i_1,\ldots,i_r)$.  
    \begin{enumerate}[(a)]
    \item \label{pi1-AA} If $r \geq 2$ and $j \neq i_k$ for $k \geq 2$, we have
        \[x_I([\alpha_j,\beta])=\pm x_{i_1}(\alpha_j)x_{(i_2,\ldots,i_r)}(\beta),\]
        so long as $x_{(i_2,\ldots,i_r)}(\beta)$ is defined.
    \item \label{diff-heights} If $s \geq 2$, then
        \[x_I([\alpha_{j_s},\cdots[\alpha_{j_{r-1}},\alpha_{j_r}] \cdots ])=0.\]
    \end{enumerate}
\end{lem}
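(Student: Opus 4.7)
The plan is to deduce both parts from the Andrews--Arkowitz formula \eqref{eq:AA} applied to the differential $dx_I = \sum_{k=1}^{r-1} \overline{x_{(i_1,\ldots,i_k)}} x_{(i_{k+1},\ldots,i_r)}$ of the minimal model, exploiting the fact that for a sphere inclusion $\alpha_j$ the homotopy period $x_K(\alpha_j)$ is nonzero only when $K=(j)$ has depth~$1$.

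For part \ref{pi1-AA}, the expansion of $x_I([\alpha_j, \beta])$ runs over the quadratic summands of $dx_I$, each producing two contributions of the form $\pm\,\overline{x_{(i_1,\ldots,i_k)}}(\alpha_j)\,x_{(i_{k+1},\ldots,i_r)}(\beta)$ and $\pm\,\overline{x_{(i_1,\ldots,i_k)}}(\beta)\,x_{(i_{k+1},\ldots,i_r)}(\alpha_j)$. By the observation above, the first kind survives only when $k=1$ and $i_1 = j$, while the second requires $k=r-1$ and $i_r = j$; the hypothesis $j \neq i_k$ for $k \geq 2$ eliminates every instance of the second kind, leaving exactly the single term claimed in the statement. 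The main obstacle is that \eqref{eq:AA} was originally derived for simply connected targets, while we must allow some $q_i = 1$. To bridge this, I would rely on the alternate path-space computation \eqref{eq:Delta} together with Proposition \ref{prop:period=Magnus}, which identifies depth-$r$ periods with coefficients of the Magnus expansion; using an explicit representative of $[\alpha_j, \beta]$ (a commutator when $\deg\beta=1$, or a path-lifted conjugation when $\deg\beta>1$), the behaviour of the Magnus expansion under commutators reproduces the same two families of terms with the same cancellation pattern.

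For part \ref{diff-heights}, I would induct on $r$. The base case $r=2$ gives $s=2$ and reduces to $x_I(\alpha_{j_2})=0$, since a depth-$2$ period vanishes on a sphere inclusion. For the inductive step, applying the same expansion to $x_I([\alpha_{j_s}, W'])$ with $W' = [\alpha_{j_{s+1}}, \cdots [\alpha_{j_{r-1}}, \alpha_{j_r}] \cdots]$ of length $r-s$, the only potentially nonzero summands are the $k=1$ term (if $i_1 = j_s$), contributing $\pm x_{(i_2,\ldots,i_r)}(W')$, and the $k=r-1$ term (if $i_r = j_s$), contributing $\pm \overline{x_{(i_1,\ldots,i_{r-1})}}(W')$. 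In either case the remaining factor is a depth-$(r-1)$ period applied to a right-nested Whitehead product of length $r-s$; since the depth-length discrepancy $(r-1) - (r-s) = s-1 \geq 1$ is preserved, the inductive hypothesis yields zero.
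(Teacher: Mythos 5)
Your proposal captures the same underlying mechanism as the paper's proof: expand $x_I([\alpha_j,\beta])$ along the quadratic terms of $dx_I$, and use the fact that $\alpha_j$ is a sphere inclusion to observe that $\overline{x_{(i_1,\ldots,i_k)}}(\alpha_j)$ can be nonzero only for $k=1$ (with $i_1=j$), while $x_{(i_{k+1},\ldots,i_r)}(\alpha_j)$ requires $k=r-1$ and $i_r=j$, which the hypothesis excludes; for part (b) one inducts on the gap between depth and length of the Whitehead product. However, the paper does \emph{not} invoke the Andrews--Arkowitz formula in the proof of this lemma. It instead chooses an explicit DGA homomorphism $\theta=\partial^*(\alpha_j^*m\oplus b)$, with $b$ constructed (by the inductive hypothesis) to be homotopic to $\beta^*m$ and to kill the lower periods, and then reads the surviving term off the alternate path-space computation \eqref{eq:Delta}. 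The base case for (b) is $r=1$ rather than $r=2$ (a cocycle pairs trivially with a Whitehead product), but that is cosmetic.

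The genuine gap is your proposed bridge for the non--simply-connected case. You correctly note that \eqref{eq:AA} is established only for simply connected targets, and suggest repairing this with Proposition~\ref{prop:period=Magnus}. But that proposition concerns \emph{elements of the fundamental group} and the Magnus expansion of group words; it says nothing about homotopy periods evaluated on classes in $\pi_n$ for $n\geq 2$. When some $q_i=1$ (some codimension-$2$ components) and others are $>1$ --- precisely the mixed case this lemma must cover --- the class $[\alpha_j,\beta]$ with $\deg\beta>1$ lives in a higher homotopy group, and the ``path-lifted conjugation'' you gesture at is not supported by any cited result. The paper avoids the issue entirely by never appealing to Andrews--Arkowitz here: the computation in \eqref{eq:Delta} is carried out directly from the minimal model of the wedge, which makes sense regardless of simple-connectivity. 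To make your argument rigorous you would need to either prove an Andrews--Arkowitz--type identity in this mixed setting or, as the paper does, work from \eqref{eq:Delta} and avoid citing \eqref{eq:AA}.
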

\begin{proof}
    We use induction on $r$ and the formula \eqref{eq:Delta}.

    First, part \ref{diff-heights} is true for $r=1$ because $x_{(i)}$ is a cocycle and Whitehead products are always homologically trivial.  For $r \geq 2$, part \ref{diff-heights} follows from part \ref{diff-heights} for $r-1$ and part \ref{pi1-AA} for $r$.

    It remains to prove part \ref{pi1-AA}.  We assume by induction that both parts hold for $|I|<r$.  Write $n=\deg \beta$.

    Denote the minimal model of the wedge by $m:\mathcal A \to \Omega^*(S^{q_1} \vee \cdots \vee S^{q_r})$; we can assume that $m$ is nonzero only on the $x_{(i)}$.  Write $f:S^n \to S^{q_1} \vee \cdots \vee S^{q_r}$ for the standard representative of $[\alpha_j,\beta]$; in particular, $f$ factors through
    \[S^n \xrightarrow{\partial} S^{q_j} \vee S^{n-q_j+1} \xrightarrow{\alpha_j \vee \beta} S^{q_1} \vee \cdots \vee S^{q_r}.\]
    We would like to compute the obstruction to nullhomotoping $f^*m$.  Since this is a homotopy invariant of homomorphisms $\mathcal A \to \Omega^*S^n$, it is enough to compute it for $\partial^*(\alpha_j^*m \oplus b)$, where $b$ is a homomorphism homotopic to $\beta^*m$.

    Since $x_{i_2},\ldots,x_{i_r}$ evaluate to zero on $S^{q_j}$, $\alpha_j^*m$ sends them to zero, as well as $x_J$ for every tuple $J$ of elements not equal to $j$.  It sends $x_j$ to the volume form $d\vol_{S^{q_j}}$.

    By induction, we can construct a homotopy of $\beta^*m$ to a homomorphism which sends $x_J \mapsto 0$ for every $s$-tuple $J$ of distinct elements in $i_2,\ldots,i_r$, for $s<r-1$.  This map extends to one which sends $x_{(i_2,\ldots,i_r)} \mapsto x_{(i_2,\ldots,i_r)}(\beta)d\vol_{S^{n-q_j+1}}$.

    By formula \eqref{eq:Delta}, the obstruction to nullhomotoping $\theta=\partial^*(\alpha_j^*m \oplus b)$ is given by
    \[-\int_{S^n} \sum_{k=1}^{r-1}\theta(x_{i_1,\ldots,i_k})\ph(sx_{i_{k+1},\ldots,i_r}),\]
    with $\ph$ constructed inductively.  Following the induction, we see that for $k>1$, $\ph(sx_{i_{k+1},\ldots,i_r})=0$, and for $k=1$ it is a primitive of $\partial^*b(x_{(i_2,\ldots,i_r)})$.  Therefore, we get
    \begin{align*}
        x_I([\alpha_j,\beta]) &= -\int_{S^n} \partial^*(\alpha_j^*m \oplus 0)(x_{i_1}) \wedge \Prim(\partial^*(0 \oplus b)(x_{(i_2,\ldots,i_r)})) \\
        &= -x_{i_1}(\alpha_j)x_{(i_2,\ldots,i_r)}(\beta)\int_{S^n} \partial^* d\vol_{S^{q_j}}\Prim(\partial^*d\vol_{S^{n-q_j+1}}).
    \end{align*}
    The last integral is, up to sign, the linking number of preimages of arbitary points of the two spheres, i.e., $\pm 1$.
\end{proof}

\section{Milnor invariants and Massey products}\label{S:Milnor-and-Massey}

In the classical setting of $1$-dimensional links in $S^3$, Turaev \cite{Turaev}
and later Porter \cite{Porter} showed that Milnor's $\mubar$-invariants can be
computed via Massey products.  This Massey product definition of Milnor invariants extends verbatim to higher dimensions, as we discuss next.  In this section, we restrict to Milnor invariants with distinct indices, so the number of components $r$ is the depth $d$ of the Milnor invariant.

\subsection{Massey products} \label{S:Massey}
Massey products are a family of higher cohomology operations generalizing the cup product.  One starts with the Massey triple product of three cohomology classes, which can be defined when their pairwise cup products are zero.  In general, Massey products are non-unique because the definition includes a choice of primitive: the $d$-fold Massey product of a $d$-tuple is defined as a \emph{set} of cohomology classes, which is nonempty when certain lower-order Massey product sets contain zero.

We now give the formal definition.  Given a space $X$, let $u_i \in H^{q_i}(X;R)$, for $i=1,\ldots,d$ be cohomology classes with coefficients in a ring $R$.  Then the \strong{Massey product} $\langle u_1,\ldots,u_d \rangle$ is defined as follows.  Let $a_{i..i}=u_i$, and for $1 \leq i<j \leq d$, suppose that there exist cochains
$a_{i..j}$ such that
\[
\delta a_{i..j}=\sum_{k=i}^{j-1} \overline{a_{i..k}}a_{k+1..j}
\]
where $\overline x=(-1)^{\deg x}x$.  Then $\langle u_i,\ldots,u_d \rangle$ is the set of all possible cohomology classes of
\[\sum_{k=i}^{d-1} \overline{a_{i..k}}a_{k+1..d}.\]
In particular, the two-fold Massey product is just the cup product.  In general, the $d$-fold Massey product lies in degree
\begin{equation}
\label{eq:deg-masssey}
\sum_{i=1}^d q_i-(d-2).
\end{equation}

More generally, we can take $u_i \in H^{q_i}(X_i;R)$, where the $X_i$ are subspaces of a common space $X$.  Then the Massey product $\langle u_1,\ldots,u_d \rangle$ is a class in $H^*(X_1 \cap \cdots \cap X_d;R)$.  This is the construction used by Porter to give an alternate definition of Milnor invariants, which we repeat here.  

\subsection{Definition of Milnor invariants}
Consider a smooth embedding
\begin{equation}\label{eq:emb-link-map-f}
  f:S^{p_i} \sqcup \cdots \sqcup S^{p_d} \longrightarrow S^m=\RR^m\cup \{\infty\},
\end{equation}
where $1 \leq p_1,\ldots,p_d \leq m-2$.  As before, define $q_i=m-p_i-1$.  Let $N_i$ be disjoint tubular neighborhoods of $f(S^{p_i})$, and let $u_i \in H^{q_i}(S^m \setminus N_i;R)$ be the Alexander dual of $[f(S^{p_i})]$.  We consider the Massey product $\langle u_1,\ldots,u_d \rangle$. 
Using its degree from equation \eqref{eq:deg-masssey} and the cohomology of $S^m \setminus \img f$ via Alexander duality, we deduce the following fact:
\begin{prop}
    The Massey product $\langle u_1,\ldots,u_d \rangle$ is zero unless it is $(m-1)$-dimensional, i.e., when
    \[
    \pushQED{\qed}
    \sum_{i=1}^d p_i=(m-2)(d-1)+1.
    \qedhere
    \popQED
    \]
\end{prop}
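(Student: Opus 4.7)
The plan is to combine the degree formula \eqref{eq:deg-masssey} with a computation of the cohomology of $S^m\setminus\img f$ via Alexander duality, and then observe that the only possible ``slot'' for a nontrivial Massey product lies in the top dimension $m-1$.

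First I would translate the given setup. Writing $X_i=S^m\setminus N_i$, we have $X_1\cap\cdots\cap X_d\simeq S^m\setminus\img f$ (up to homotopy, by retracting each tubular neighborhood). By the definition recalled in \S\ref{S:Massey}, the Massey product $\langle u_1,\ldots,u_d\rangle$ is a subset of $H^N(S^m\setminus\img f;R)$, where by \eqref{eq:deg-masssey} the total degree is
\[
N=\sum_{i=1}^d q_i-(d-2),
\]
using that $\deg u_i=q_i$.

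Next I would compute $H^*(S^m\setminus\img f;R)$. Since $\img f=S^{p_1}\sqcup\cdots\sqcup S^{p_d}$, Alexander duality gives $\tilde H^k(S^m\setminus\img f;R)\cong\tilde H_{m-k-1}(\img f;R)$. The reduced homology of a disjoint union of spheres is concentrated in degree $0$ (giving $R^{d-1}$, hence a contribution to $H^{m-1}$) and in the degrees $p_i$ (each contributing a copy of $R$ in cohomological degree $q_i=m-p_i-1$). Consequently,
\[
H^k(S^m\setminus\img f;R)\neq 0\quad\Longrightarrow\quad k\in\{0,\,q_1,\ldots,q_d,\,m-1\}.
\]

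Now I would rule out the low-degree possibilities for $N$. Because $1\leq p_i\leq m-2$, each $q_i\geq 1$. If $N=q_j$ for some $j$, then $\sum_{i\neq j}q_i=d-2$, which is impossible since it is a sum of $d-1$ positive integers. Similarly $N=0$ would force $\sum q_i=d-2<d-1$, also impossible. Hence, if the Massey product is nonzero, we must have $N=m-1$. Translating $\sum q_i-(d-2)=m-1$ using $q_i=m-p_i-1$ yields $\sum p_i=(m-2)(d-1)+1$, as required.

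There is no real obstacle here; the only point that warrants care is the identification of $X_1\cap\cdots\cap X_d$ with the link complement and the correct bookkeeping of cohomology degrees under Alexander duality (including the contribution from the multiple connected components of $\img f$ to $H^{m-1}$, which is the ``target'' degree but does not affect the vanishing statement).
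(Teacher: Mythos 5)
Your proof is correct and follows exactly the route the paper indicates (the paper states the proposition with a \qed and only gestures at the proof in the preceding sentence: "Using its degree from equation \eqref{eq:deg-masssey} and the cohomology of $S^m \setminus \img f$ via Alexander duality, we deduce the following fact"). Your careful Alexander-duality computation and the elementary arithmetic that rules out the degrees $0$ and $q_j$ (since $q_i \geq 1$ forces $\sum_{i\neq j}q_i \geq d-1 > d-2$) fills in precisely what the paper leaves implicit.
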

For $m=3$ and $p_1=\cdots=p_d=1$, Porter defines the Milnor invariant $\mubar(1,\ldots,d) \in R$ to be such that the Massey product $\langle u_1,\ldots,u_d \rangle$ consists of the single element
\[\mubar(1,\ldots,d)v_{1,d},\]
where $v_{i,j}$ is the Lefschetz dual of a path between the $i$th and $j$th spheres.  In the classical case, he shows that this is well-defined whenever the Milnor invariants corresponding to subsequences of $(1,\ldots,d)$ are zero.  Below, we extend this result to all possible $m$ and $p_1,\ldots,p_d$.

Assuming for now that this gives a well-defined invariant, we get a relatively simple integral formula for it:
\begin{prop} \label{simpler}
    Define the following differential forms:
    \begin{itemize}
    \item $\xi_i \in \Omega^{q_i+1}(S^m)$, a Poincar\'e dual to $f(S^{p_i})$ supported on $N_i$,
    \item $\omega_i=\omega_{i..i}\in \Omega^{q_i}(S^m)$, a primitive in $S^m$ of $\xi_i$,
    \item $\omega_{i..j}$, a primitive in $S^m \setminus (N_i \cup \cdots \cup N_j)$ of
    \[d\omega_{i..j}=\sum_{k=i}^{j-1} \overline{\omega_{i..k}}\omega_{k+1..j}, \text{ and} \]
    \item $\theta_{1,d} \in \Omega^1(S^m \setminus (N_1 \cup \cdots \cup N_d))$, a representative of the Lefschetz dual of $[\partial N_1]$.
    \end{itemize}
    Then
    \[
    \pushQED{\qed} 
    \mubar(1,\ldots,d)=\int_{S^m \setminus (N_1 \cup \cdots \cup N_d)} \theta_{1,d}\omega_{1..d}.
    \qedhere
    \popQED
    \]
\end{prop}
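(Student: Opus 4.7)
The plan is to identify the forms $\omega_{i..j}$ with a de~Rham defining system for the Massey product $\langle u_1,\ldots,u_d\rangle$ and to extract $\mubar(1,\ldots,d)$ from the resulting representative by Stokes' theorem on $X:=S^m\setminus(N_1\cup\cdots\cup N_d)$. First I would verify the defining-system claim: on $S^m\setminus N_i$, the form $\omega_i$ is closed (since $\xi_i$ is supported in $N_i$) and has period $1$ on a meridian $q_i$-sphere of $f(S^{p_i})$, so its restriction represents the Alexander dual class $u_i$. The recursion
\[
d\omega_{i..j}=\sum_{k=i}^{j-1}\overline{\omega_{i..k}}\omega_{k+1..j}
\]
is exactly the de~Rham translation of the cochain-level defining equations for Massey products (with the sign convention $\overline{x}=(-1)^{\deg x}x$ from \S\ref{S:Massey}). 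So the closed $(m-1)$-form $\eta:=\sum_{k=1}^{d-1}\overline{\omega_{1..k}}\omega_{k+1..d}=d\omega_{1..d}$ is a de~Rham representative of $\langle u_1,\ldots,u_d\rangle$, and Porter's definition gives $[\eta]=\mubar(1,\ldots,d)\cdot[v_{1,d}]$ in $H^{m-1}(X)$.

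Next, I would read off the coefficient $\mubar$ by pairing $[\eta]$ with the Lefschetz dual of $v_{1,d}$. The 1-form $\theta_{1,d}$ is exactly such a dual pairing form: by Lefschetz duality in $(X,\partial X)$, the class $[\partial N_1]$ is Poincar\'e dual to a meridian path from $N_1$ to $N_d$, and the periods of $\theta_{1,d}$ against the other basis classes $v_{i,j}$ of $H^{m-1}(X)\cong\widetilde H_0(\bigcup_i f(S^{p_i}))$ vanish. Applying Stokes' theorem on $X$ with $\theta_{1,d}$ closed and $\eta=d\omega_{1..d}$ then gives
\[
\int_X \theta_{1,d}\wedge d\omega_{1..d}=\pm\int_{\partial X}\theta_{1,d}\wedge\omega_{1..d},
\]
which is the integral formula in the proposition (after the signs encoded in the $\overline{(\cdot)}$ notation and in Stokes' theorem are tracked through, and after reading the displayed integrand $\theta_{1,d}\omega_{1..d}$ under its natural Stokes-pairing interpretation).

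The main step requiring care, and what I expect to be the main obstacle, is confirming that the Lefschetz-duality pairing really isolates the $v_{1,d}$-component of $[\eta]$ rather than some other combination of the $(d-1)$ basis classes of $H^{m-1}(X)$, and that the normalization works out to give exactly $\mubar$ with the correct sign. This amounts to checking the prescribed periods of $\theta_{1,d}$ on the boundary components and the compatibility of Stokes' boundary orientations with Porter's convention fixing $v_{1,d}$; the bookkeeping of signs through the $\overline{(\cdot)}$ convention and the degree shift inherent in primitives is the tedious but essential part.
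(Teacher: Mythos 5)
Your approach is essentially the same as what the paper intends: identify the $\omega_{i..j}$ as a de~Rham defining system for $\langle u_1,\ldots,u_d\rangle$, observe that $\sum_{k=1}^{d-1}\overline{\omega_{1..k}}\omega_{k+1..d}$ represents the Massey product class $\mubar(1,\ldots,d)\,v_{1,d}$, and extract the coefficient by pairing against $\theta_{1,d}$. The paper gives no written proof; the displayed $\qed$ and the surrounding text ``this formula follows from the definition'' amount to precisely this chain of reasoning.

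Two points deserve attention. First, the Stokes step at the end is both unnecessary and hazardous: if $\omega_{1..d}$ existed as a globally defined primitive on $X=S^m\setminus(N_1\cup\cdots\cup N_d)$, the Massey product representative would be exact on $X$, so $\mubar(1,\ldots,d)$ would vanish. In the regime where the proposition is interesting, $\omega_{1..d}$ does not exist on $X$. The notation ``$d\omega_{1..d}$'' in the formula should be read as a mnemonic for the Massey representative $\sum_k\overline{\omega_{1..k}}\omega_{k+1..d}$ (already a closed $(m-1)$-form in $X$), so the clean statement is simply $\mubar(1,\ldots,d)=\int_X \theta_{1,d}\wedge\bigl(\sum_k\overline{\omega_{1..k}}\omega_{k+1..d}\bigr)$; no boundary integral is needed or well-defined. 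Second, you correctly flagged that one must check $\theta_{1,d}$ isolates the $v_{1,d}$-coefficient rather than seeing a mixture of $v_{i,j}$'s. This is indeed a nontrivial point, but it is not part of what Proposition~\ref{simpler} proves: the proposition is stated under the explicit standing assumption that the Massey product is already a well-defined pure multiple of $v_{1,d}$. That statement is established separately, with the more elaborate forms involving $\ph_{i..j}^{l/r}$, in Theorem~\ref{thm:int-formula}, which shows the Massey product vanishes on $\partial N_i$ for $1<i<d$. So your instinct that this is where the real work lies is right, but Proposition~\ref{simpler} deliberately offloads that work.
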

This formula follows from the definition of $\mubar(1,\ldots,d)$ and will be useful to us in some cases.  In particular, it extends verbatim to the case in which some indices are repeated.  However, for many results, including the proof that the Massey product takes the stated form, we need a more complicated formula:

\begin{thm} \label{thm:int-formula}
  Let $f=(f_1,\dots, f_d):S^{p_i} \sqcup \cdots \sqcup S^{p_d} \to S^m$ be a smooth embedding,
  where $1 \leq p_1,\ldots,p_d \leq m-2$.  Let $N_i$ be disjoint tubular
  neighborhoods of $f(S^{p_i})$, and let $u_i \in H^{q_i}(S^m \setminus N_i)$
  be the Alexander dual of $[f(S^{p_i})]$.  Suppose that all Massey products
  \[\langle u_{i_1},\ldots,u_{i_r} \rangle \subseteq H^*(S^m \setminus (N_{i_1} \cup \cdots \cup N_{i_r}))\]
  are zero for every proper subsequence $i_1,\ldots,i_r$ of $1,\ldots,d$, and that
  \[\sum_{i=1}^d p_i=(m-2)(d-1)+1.\]
  Then the Massey product $\langle u_1,\ldots,u_d \rangle$ consists of a single element
  \[\mubar(1,\ldots,d)v_{1,d},\]
  where $v_{i,j}$ is the Lefschetz dual of a path between the $i$th and $j$th
  spheres.  Finally, the number $\mubar(1,\ldots,d)$ can be computed by
  \begin{equation} \label{eqn:Massey}
    \mubar(1,\ldots,d)=\int_{N_d} \sum_{k=1}^{d-1}\overline{\omega_{1..k}}\ph_{k+1..d}^l\xi_d,
  \end{equation}
  where one defines the following differential forms:
  \begin{itemize}
  \item Let $\xi_i \in \Omega^{q_i+1}(S^m)$ be a Poincar\'e dual to $S^{p_i}$
    supported on $N_i$.
  \item Let $\omega_i=\omega_{i..i} \in \Omega^{q_i}(S^m)$ be a primitive of $\xi_i$, and let $\ph_{i..i}^l=\ph_{i..i}^r=1 \in \Omega^0(S^m)$.
  \item For $i<j$, $(i,j) \neq (1,d)$, define $\omega_{i..j}$ as a primitive in
    $S^m$ of
    \begin{equation}
        \label{eq:d-omega-ij}
        d\omega_{i..j}=\sum_{k=i}^{j-1} \overline{\omega_{i..k}}\omega_{k+1..j} + \sum_{k=i}^j \overline{\ph_{i..k}^l}\xi_k\ph_{k..j}^r
    \end{equation}
    where $\ph_{i..j}^l$ and $\ph_{i..j}^r$ are primitives in $\Omega^*(N_j)$ and
    $\Omega^*(N_i)$, respectively, of
    \begin{equation} \label{eq:d-ph-ij}
        d\ph_{i..j}^l=\sum_{k=i}^{j-1} \overline{\omega_{i..k}}\ph_{k+1..j}^l \quad\text{and}\quad d\ph_{i..j}^r=\sum_{k=i}^{j-1} \overline{\ph_{i..k}^r}\omega_{k+1..j}.
    \end{equation}
  \end{itemize}
\end{thm}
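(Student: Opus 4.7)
My plan is to construct the forms $\omega_{i..j}$, $\ph^l_{i..j}$, and $\ph^r_{i..j}$ inductively on $j-i$, verify that they restrict to a Massey defining system on the complement $X := S^m \setminus (N_1 \cup \cdots \cup N_d)$, and then extract $\mubar(1,\ldots,d)$ by a Stokes-theorem computation localized at $N_d$.

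For step one, the base case $\omega_i$ exists globally because $[\xi_i] \in H^{q_i+1}(S^m)=0$. Assuming inductively that all forms with smaller range have been constructed, I would first check by direct graded-Leibniz computation that the right-hand sides of \eqref{eq:d-ph-ij} and \eqref{eq:d-omega-ij} are closed. Exactness of the RHS of \eqref{eq:d-ph-ij} on $N_j$ (respectively $N_i$) follows because $N_j$ retracts onto $S^{p_j}$, whose cohomology is nonzero only in degrees $0$ and $p_j$; a degree count rules out obstructions. For the globally-defined $\omega_{i..j}$, exactness on $S^m$ reduces via $H^*(S^m)=H^*(S^0)$ to showing that a single top-degree obstruction class vanishes. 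The $\xi$-correction terms in \eqref{eq:d-omega-ij} are designed precisely so that this class, viewed on $X$, coincides with the lower Massey product $\langle u_i,\ldots,u_j\rangle$, which vanishes by hypothesis whenever $(i,j) \neq (1,d)$.

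For step two, on $X$ all the $\xi_k$ and $\ph^{l/r}$ terms vanish, so the restricted $\omega_{i..j}$ satisfy the ordinary Massey recursion and $\Omega := \sum_{k=1}^{d-1}\overline{\omega_{1..k}}\omega_{k+1..d}|_X$ represents $\langle u_1,\ldots,u_d\rangle \subseteq H^{m-1}(X;\RR)$. The Massey-product indeterminacy is a sum of products of the form (lower Massey)$\cdot$(cohomology class), which all vanish by hypothesis, so this Massey product is a single element. By Alexander duality, $H^{m-1}(X;\RR)$ is spanned by the Lefschetz duals $v_{i,j}$ of arcs between pairs of components; to show $[\Omega]=\mubar \cdot v_{1,d}$, I would pair $[\Omega]$ against the cycle dual to $v_{i,j}$ for $(i,j)\neq(1,d)$ and observe that the pairing reduces to a Massey product over a proper sublink (realized in a neighborhood of the sub-link $S^{p_i}\cup S^{p_j}$ plus a connecting arc disjoint from the other components), which vanishes by hypothesis.

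For step three, I would begin from the complement-integral formula of Proposition \ref{simpler} applied to the globally-constructed forms restricted to $X$, namely $\mubar = \int_X \theta_{1,d}\,\omega_{1..d}$ for a primitive $\omega_{1..d}$ of $\Omega$ on $X$, and transform it via Stokes' theorem into an $N_d$-integral. The form $\ph^l_{k+1..d}$ plays the role of the ``$N_d$-restriction'' of the hypothetical global extension of $\omega_{k+1..d}$ whose differential lacks the $\overline{\ph^l_{k+1..d}}\xi_d$ correction term; using \eqref{eq:d-ph-ij} and the fact that $\xi_d$ is supported in $N_d$, the boundary contribution at $\partial N_d$ can be rewritten as $\int_{N_d}\sum_k \overline{\omega_{1..k}}\ph^l_{k+1..d}\xi_d$. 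I expect the main obstacle to be the bookkeeping in this final Stokes argument---tracking signs, supports, and the interplay of the various $\xi$-corrections---together with the precise identification (needed in step two) of the Lefschetz-dual cycles used to isolate the $v_{1,d}$-coefficient, so that the passage from the Massey defining system on $X$ back to the $N_d$-localized formula is watertight.
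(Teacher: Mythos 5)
Your overall architecture---inductive construction of the forms, restriction to the complement $X = S^m \setminus \bigcup_i N_i$ to produce a Massey defining system, then Stokes localization to $N_d$---matches the paper's proof. However, you have the obstruction analysis exactly backwards, and this is where the hypothesis about vanishing lower Massey products actually does its work, so the gap is substantive.

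For $\omega_{i..j}$ the right-hand side of \eqref{eq:d-omega-ij} is a closed form on $S^m$ of degree $\sum_{a=i}^{j} q_a - (j-i) + 1 \leq m-1$; since $H^k(S^m)=0$ for $0<k<m$, it is automatically exact, with no appeal to lower Massey products and no role for the $\xi$-correction terms here. Your claim that those correction terms ``are designed precisely so that this class \dots coincides with the lower Massey product'' misattributes their purpose. The genuine obstruction is for $\ph^{l}_{i..j}$ and $\ph^{r}_{i..j}$. On $N_j \simeq S^{p_j}$, the right-hand side of \eqref{eq:d-ph-ij} has degree $\sum_{a=i}^{j-1} q_a - (j-i) + 1$, and your assertion that ``a degree count rules out obstructions'' is false: this degree equals $p_j$ precisely when every index $\ell \notin \{i,\ldots,j\}$ has $q_\ell = 1$, i.e., codimension $2$. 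That is the generic situation in the polynomial regime (all but one component of dimension $m-2$), so it cannot be dismissed. In that case one must show $\int_{f_j(S^{p_j})} d\ph^l_{i..j} = 0$, and a Stokes computation identifies this integral with $\langle u_i,\ldots,u_j\rangle(\partial N_j)$, which vanishes by hypothesis. This is the only place the proper-subsequence hypothesis enters the construction of the defining system, and you have misplaced it onto a step where it is not needed.

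A secondary divergence: to isolate the $v_{1,d}$-coefficient, the paper evaluates $[\Omega]$ on $[\partial N_i]$ for $1<i<d$ and shows these pairings vanish by a direct Stokes argument on $N_i$, exploiting the $\xi_i$-correction structure of $d\omega_{i..j}$; that is exactly what those corrections buy you. Your proposal to pair against cycles dual to $v_{i,j}$ and reduce to ``a Massey product over a proper sublink realized in a neighborhood'' is vaguer and not an obviously valid reduction; you should verify you can carry it out, or else adopt the $\partial N_i$-evaluation argument. (Note also that the uniqueness claim---that the Massey product set is a singleton---is not actually established inside the paper's proof of this theorem; the paper defers it to the homotopy-period formalism. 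Your indeterminacy sketch is a different route, but as stated it uses an oversimplified description of Massey indeterminacy for $d>3$.)
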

Submanifolds dual to the forms defined above are shown in an example in Figure \ref{fig:borr-rings}.
\begin{defn}
\label{D:Milnor-invt}
  The number $\mubar(1,\ldots,d)$, defined as in Theorem \ref{thm:int-formula} as the coefficient of $v_{1,d}$ in the Massey product $\langle u_1,\ldots,u_d \rangle$, is the \strong{Milnor invariant}
  of the ordered $d$-tuple of embedded spheres determined by the embedding  
  $f:S^{p_i} \sqcup \cdots \sqcup S^{p_d} \to S^m$.
\end{defn}
\begin{rmk}
  Porter \cite[Theorem 3]{Porter} shows that this definition recovers
  that of Milnor invariants of classical links in $S^3$.  Note that our formula
  differs from Porter's by a factor of $(-1)^d$ since we use a different sign
  convention for Massey products.
\end{rmk}
\begin{rmk}
  Yet another integral formula interpolates between those of Proposition \ref{simpler} and Theorem \ref{thm:int-formula}: as in Proposition \ref{simpler}, define the forms $\omega_{i..j}$, $j<d$, on the complement of the tubular neighborhoods $N_i,\ldots,N_j$, but then define the Milnor invariant via equation \eqref{eqn:Massey}.  As we discuss below, this gives an alternate definition of the Milnor invariant as a homotopy period of $f_d$ in $S^m \setminus (N_1 \cup \cdots \cup N_{d-1})$.  In the classical case, this alternate definition was already mentioned by Porter in the guise of ``functional Massey products'', and in codimension $\geq 3$ it coincides with those of Haefliger and Steer \cite{HaeSt} and Turaev \cite{Turaev2}.

  Our more complicated formula has multiple advantages.  On the topological
  front, it allows us to show that the Massey product evaluates to zero on
  $\partial N_i$ for $i \neq 1,d$.  Geometrically, it avoids taking
  primitives in the (geometrically rather mysterious) complement, instead
  taking them in $S^m$ (whose geometry is fixed) and the $N_i$ (whose geometry
  is controlled).
\end{rmk}
\begin{rmk}
  While we express everything in terms of differential forms and real coefficients, Theorem \ref{thm:int-formula} can be proved the same way using simplicial or singular cochains in any coefficient ring.
\end{rmk}
\begin{proof}[Proof of Theorem \ref{thm:int-formula}]
  A straightforward calculation shows that the forms defined in \eqref{eq:d-omega-ij} and \eqref{eq:d-ph-ij} are closed, under the inductive assumption that the forms used within them are well-defined and their differentials also satisfy \eqref{eq:d-omega-ij} and \eqref{eq:d-ph-ij}.  We use this in several steps below.

  Assume that the primitives defining $\omega_{i..j}$ and $\ph_{i..j}^{l/r}$ exist for $1 \leq i \leq j \leq d$ when $(i,j) \neq (1,d)$.  Restricting to the subspace $S^m \setminus (N_i \cup \cdots \cup N_j)$, we get
  \[d\omega_{i..j}=\sum_{k=i}^{j-1} \overline{\omega_{i..k}}\omega_{k+1..j},\]
  since the $\xi_k, i \leq k \leq j$, are zero on this subspace; hence $\sum_{k=1}^d \overline{\omega_{1..k}}\omega_{k+1..d}$ restricts on $S^m \setminus (N_1 \cup \cdots \cup N_d)$ to a representative of an element of $\langle u_1,\ldots,u_d \rangle$.
  
  By Stokes' theorem and the fact that $\xi_j$ is zero on $N_i$ for $i\neq j$, we get that for $1<i<d$,
  \begin{align*}
    \langle u_1,\ldots,u_d \rangle(\partial N_i) &= \int_{\partial N_i} \sum_{k=1}^{d-1} \overline{\omega_{1..k}}\omega_{k+1..d} \\
    &= \int_{N_i} d\left(\sum_{k=1}^{d-1} \overline{\omega_{1..k}}\omega_{k+1..d}\right) \\
    &= -\int_{N_i} d\left(\overline{\ph_{1..i}^l}\xi_i\ph_{i..d}^r\right)  \quad \text{(since \eqref{eq:d-omega-ij} is closed)}\\
    &= -\int_{\partial N_i} \overline{\ph_{1..i}^l}\xi_i\ph_{i..d}^r = 0 \quad \text{(since $\xi_i$ is supported on $N_i$)}.
  \end{align*}
  By Alexander duality, $H^{m-1}(S^m \setminus (N_1 \cup \cdots \cup N_d)) \cong \widetilde H_0(N_1 \cup \cdots \cup N_d)$, and therefore $\langle u_1,\ldots,u_d \rangle$ must be a multiple of $v_{1,d}$.  Again by Stokes' theorem, the coefficient is
  \[\int_{\partial N_i} \sum_{k=1}^{d-1} \overline{\omega_{1..k}}\omega_{k+1..d}=\int_{N_d} \sum_{k=1}^{d-1}\overline{\omega_{1..k}}\ph_{k+1..d}^l\xi_d.\]
  (These are the only surviving terms since terms of the form $\pm \omega_{1..k}\omega_{k+1..\ell}\omega_{\ell+1..d}$ cancel out, and terms including a $\xi_i$ for $i<d$ are zero on $N_d$.)
  
  Finally we need to show that the primitives $\omega_{i..j}$ and $\ph_{i..j}^{l/r}$ exist for all $(i,j) \neq (1,d)$.  We have already remarked that the forms \eqref{eq:d-omega-ij} and \eqref{eq:d-ph-ij} are closed.  Then $d\omega_{i..j}$ is exact since its degree is always $\leq m-1$.  On the other hand, $d\ph_{i..j}^l$ may be a $p_j$-form, in which case in order to show that it is exact we must show that its integral over $f_j(S^{p_j})$ is zero.  Similarly, $d\ph_{i..j}^r$ may be a $p_i$-form.  However, by again using Stokes' theorem, equation \eqref{eq:d-omega-ij}, and the fact that $\xi_j$ is zero on $N_i$ when $i\neq j$, we get 
  \begin{align*}
    \int_{f_j(S^{p_j})} d\ph_{i..j}^l &= \int_{N_j} d\ph_{i..j}^l\xi_j \\
    &= \int_{\partial N_j} \sum_{k=i}^{j-1} \overline{\omega_{i..k}}\omega_{k+1..j} \\
    &= \langle u_i,\ldots,u_j \rangle(\partial N_j).
  \end{align*}
  Similarly
  \[\int_{f(S^{p_i})} d\ph_{i..j}^r=\langle u_i,\ldots,u_j \rangle(\partial N_i).\]
  Thus either integral computes the Massey product $\langle u_i,\ldots,u_j \rangle$, which we have assumed to be zero.
\end{proof}


\begin{figure} 
    \begin{tikzpicture}
    \node at (0,0)
    {\includegraphics[scale=0.3]{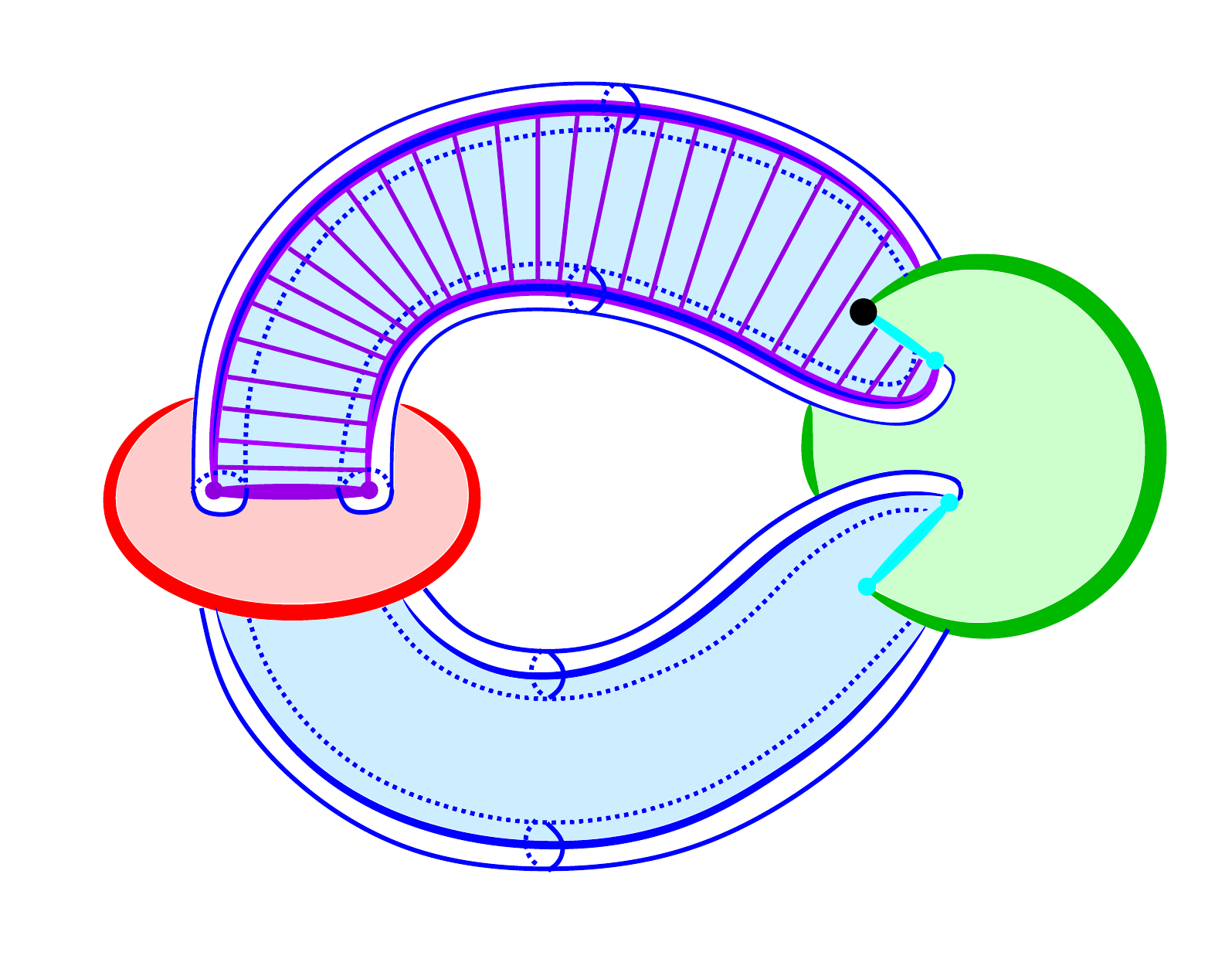}};
    \node at (-4.25,1.25) {$\omega_{1..1}$};
    \draw[->, thick] (-4,1) -- (-3,0);
    \node at (-5,-0.25) {$\xi_1$};
    \draw[->, thick] (-4.75,-0.25) -- (-3.35,-0.25);
    \node at (4.5,1) {$\omega_{3..3}$};
    \draw[->, thick] (4,0.9) -- (2.75,0.25);
    \node at (5,0) {$\xi_3$};
    \draw[->, thick] (4.75,0) -- (3.6,0);
    \node at (-2.35,-2.95) {$\omega_{2..2}$};
    \draw[->, thick] (-2.25,-2.75) -- (-1.5,-1.5);
    \node at (-1.5,-3.25) {$\xi_2$};
    \draw[->, thick] (-1.4,-3) -- (-1.1,-2.35);
    \node at (-3.15, -2.15) {$\omega_{1..1} \omega_{2..2}$};
    \draw[->, thick] (-2.9,-1.9) -- (-2.1,-0.15);
    \node at (-3.25,2.75) {$\varphi_{1..2}^l$};
    \draw[->, thick] (-2.9,2.4) -- (-2.25,1.75);
    \node at (-1.25,3.5) {$\varphi_{1..2}^l \xi_2$};
    \draw[->, thick] (-1.65,3.2) -- (-1.25,2.25);
    \node at (1,3.25) {$\omega_{1..2}$};
    \draw[->, thick] (0.85,3) -- (1,1.5);
    \node at (2.25,2.75) {$\omega_{1..2} \xi_3$};
    \draw[->, thick] (1.8,2.5) -- (1.62,1.2);
    \end{tikzpicture}
    \caption{For the Borromean rings shown, we represent the various forms in Theorem \ref{thm:int-formula} by their Poincar\'e or Lefschetz duals.
    }
    \label{fig:borr-rings}
\end{figure}

\begin{ex}
\label{ex:borr-rings}
For 3-component links, Theorem \ref{thm:int-formula} gives 
\[
\mu(1,2,3) = \int_{N_3} \left( \overline{\omega_{1..1}} \varphi_{2..3}^l \xi_3 + \overline{\omega_{1..2}}  \xi_3 \right),
\]
where we use that $\varphi_{3..3}^l=1$.
We apply it to  the Borromean rings shown in Figure \ref{fig:borr-rings}, where $m=3$ and each $p_i=1$, visualizing the various forms by their Poincar\'e or Lefschetz duals.
Thus the $\xi_i$ correspond to link components and the $\omega_{i..i}$ to Seifert surfaces.  
(This correspondence can be made more precise by using for example the geometric cochains of Friedman, Medina-Mardones, and Sinha \cite{FMMS}.)
The first summand $\overline{\omega_{1..1}} \varphi_{2..3}^l \xi_3$ contributes nothing because, although $\varphi_{2..3}^l$ 
cannot be taken to be zero, $\omega_{1..1}$ and $\xi_3$ can be taken to have disjoint support, since component 3 does not intersect Seifert surface 1.
On the other hand, we may take $\varphi_{1..2}^r=0$, since there is a neighborhood of component 1 disjoint from Seifert surface 2.
We take $\varphi_{1..2}^l$ as the portion of the tubular neighborhood of the blue curve that lies above Seifert surface 1 (the red disk).  
Then the formula for $\omega_{1..2}$ simplifies to $d^{-1}(\overline{\omega_{1..1}} \omega_{2..2} + \varphi_{1..2}^l \xi_2)$.  
We take $\omega_{1..2}$ to correspond to the surface shaded by purple line segments.  Therefore $\overline{\omega_{1..2}}  \xi_3$ corresponds to the black intersection point between component 3 and Seifert surface 2.  Integrating an $m$-form dual to this point (and supported in a neighborhood of the green curve) gives $\mu(1,2,3)=\pm 1$.
\end{ex}

\subsection{Properties of Milnor invariants} \label{S:Milnor-props}
\begin{prop}
  In the notation of Theorem \ref{thm:int-formula}, the following quantities
  are equivalent definitions of the Milnor invariant $\mubar(1,\ldots,d)$:
  \begin{gather} \label{form-1}
    \int_{\partial N_d} \sum_{k=1}^{d-1}\overline{\omega_{1..k}}\omega_{k+1..d} \\
    \int_{N_d} \sum_{k=1}^{d-1}\overline{\omega_{1..k}}\ph_{k+1..d}^l\xi_d  \label{form-2} \\
    \int_{S^m} \sum_{k=1}^{d-1}\overline{\omega_{1..k}}\ph_{k+1..d}^l\xi_d  \label{form-3} \\
    \int_{f(S^{p_d})} \sum_{k=1}^{d-1}\overline{\omega_{1..k}}\ph_{k+1..d}^l. \label{form-4}
  \end{gather}
\end{prop}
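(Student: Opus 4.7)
My plan is to establish the chain of equalities $\eqref{form-1} = \eqref{form-2} = \eqref{form-3} = \eqref{form-4}$, using, respectively, Stokes' theorem on $N_d$, the support condition on $\xi_d$, and the Poincar\'e duality property of $\xi_d$.

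For $\eqref{form-1} = \eqref{form-2}$, I would apply Stokes' theorem to rewrite the boundary integral as $\int_{N_d} d\bigl(\sum_{k=1}^{d-1}\overline{\omega_{1..k}}\omega_{k+1..d}\bigr)$, and expand $d\omega_{1..k}$ and $d\omega_{k+1..d}$ via \eqref{eq:d-omega-ij}.  The cubic $\omega\omega\omega$ contributions cancel by telescoping; on $N_d$ every $\xi_j$ with $j<d$ vanishes because $\xi_j$ is supported in $N_j$; and the remaining $\xi_d$ terms, simplified using $\ph^r_{d..d}=1$, reproduce the integrand of $\eqref{form-2}$.  This is essentially the same calculation already performed at the end of the proof of Theorem \ref{thm:int-formula} to extract the coefficient of $v_{1,d}$, so I would simply invoke that computation.

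The equality $\eqref{form-2} = \eqref{form-3}$ is immediate from the support of $\xi_d$: extending each $\ph^l_{k+1..d}$ from $N_d$ arbitrarily (by a partition of unity, say) to all of $S^m$ does not change the integral, since the integrand already vanishes off $N_d$.

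For $\eqref{form-3} = \eqref{form-4}$, I set $\alpha = \sum_{k=1}^{d-1}\overline{\omega_{1..k}}\ph^l_{k+1..d}$, a form defined on $N_d$, and verify that $d\alpha = 0$ there.  Expanding by the graded Leibniz rule together with \eqref{eq:d-omega-ij} and \eqref{eq:d-ph-ij}, the $\omega\omega\ph$ terms arising from $d\omega_{1..k}\,\ph^l_{k+1..d}$ cancel pairwise against those from $\omega_{1..k}\,d\ph^l_{k+1..d}$ by exactly the telescoping used in the first step, leaving a double sum of terms of the form $\pm\overline{\ph^l_{1..j}}\xi_j\ph^r_{j..k}\ph^l_{k+1..d}$ with $1\le j\le k\le d-1$; each such $\xi_j$ has $j<d$ and hence vanishes on $N_d$.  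Once $\alpha|_{N_d}$ is known to be closed, the fact that $\xi_d$ represents the Poincar\'e dual of $[f(S^{p_d})]$ and is supported in $N_d$ gives $\int_{S^m}\alpha\xi_d = \int_{N_d}\alpha\xi_d = \int_{f(S^{p_d})}\alpha$, which is $\eqref{form-4}$.  The main obstacle will be the sign bookkeeping in this closedness check, since the overline convention interacts with the graded Leibniz rule; but the cancellation pattern is identical to the one already exploited in Theorem \ref{thm:int-formula}, and no new geometric input is needed.
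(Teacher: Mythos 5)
Your proposal is correct and follows essentially the same route as the paper's proof.  Both identify \eqref{form-1} $=$ \eqref{form-2} with the computation at the end of the proof of Theorem \ref{thm:int-formula}, both use the support of $\xi_d$ to pass between \eqref{form-2} and \eqref{form-3}, and both pass to \eqref{form-4} via the Thom isomorphism for the normal bundle of $f(S^{p_d})$, which requires $\sum_{k=1}^{d-1}\overline{\omega_{1..k}}\ph^l_{k+1..d}$ to be closed on $N_d$ --- a fact the paper simply asserts (it is an instance of the remark at the start of the proof of Theorem \ref{thm:int-formula}), whereas you spell out the Leibniz-rule telescoping, which is a reasonable amount of extra detail but not a different argument.
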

\begin{proof}
  The equivalence of formulations \eqref{form-1} and \eqref{form-2} is given in the proof of
  Theorem \ref{thm:int-formula}.  Formulation \eqref{form-3} is equivalent to \eqref{form-2}
  simply because $\xi_d$ is zero outside $N_d$, and therefore so is the form
  being integrated.
  Finally, since $\sum_{k=1}^{d-1}\overline{\omega_{1..k}}\ph_{k+1..d}^l$ is a
  closed form and $\xi_d$ represents the Thom class of the bundle
  $N_d \to S^{p_d}$, \eqref{form-4}
  is equal to \eqref{form-2} by the Thom isomorphism.
\end{proof}
\begin{prop} \label{Milnor=period}
  The Milnor invariant $\mubar(1,\ldots,d)$ is a homotopy period of $f_d$ in
  $X=S^m \setminus (N_1 \cup \cdots \cup N_{d-1})$.
\end{prop}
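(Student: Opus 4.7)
The plan is to identify the integral expression \eqref{form-4} with the Sullivan homotopy period \eqref{eq:Delta} for the map $f_d : S^{p_d} \to X$.  Note that $X$ contains $N_d$, so $f_d$ factors through $X$.  The key observation is that on $X$, where $\xi_1,\ldots,\xi_{d-1}$ all vanish, the defining equation \eqref{eq:d-omega-ij} collapses to $d(\omega_{i..j}|_X) = \sum_{k=i}^{j-1} \overline{\omega_{i..k}|_X}\,\omega_{k+1..j}|_X$ for every subsequence of $(1,\ldots,d-1)$.  This matches the differential in the minimal model of $S^{q_1}\vee\cdots\vee S^{q_{d-1}}$ described in \S\ref{S:wedges}, so setting $m(x_I) = \omega_I|_X$ for subsequences $I$ of $(1,\ldots,d-1)$ defines a DGA homomorphism to $\Omega^*(X)$ on which the alternate recipe of \S\ref{S:periods} can be run.

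With this $m$, I would apply the alternate computation \eqref{eq:Delta} to $\pi_{x_{(1,\ldots,d-1)}}(f_d)$, making the choice $\ph(sx_{(k+1)..(d-1)}) = \pm f_d^*\ph_{k+1..d}^l$ for the required primitives.  The defining equation \eqref{eq:d-ph-ij} for $\ph_{k+1..d}^l$ on $N_d$ pulls back via $f_d$ to exactly the relation mandated by the Sullivan recipe, once one separates off the $\ell = d-1$ contribution (using $\ph_{d..d}^l = 1$) and identifies it with the leading term $f_d^*m\,x_{(k+1)..(d-1)}$ in \eqref{eq:Delta}.  Substituting these primitives into \eqref{eq:Delta} reproduces the integrand of \eqref{form-4}, yielding $\mubar(1,\ldots,d) = \pm\pi_{x_{(1,\ldots,d-1)}}(f_d)$, which exhibits the Milnor invariant as a homotopy period of $f_d$ in $X$.

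The main obstacle will be careful sign bookkeeping, reconciling the overlines appearing in Theorem \ref{thm:int-formula} with the minus signs in the Sullivan recipe and the Koszul conventions of \S\ref{S:wedges}.  A secondary subtlety arises when some $q_i = 1$ (forcing $p_d = 1$ and all other $p_i = m-2$), in which case $X$ is not simply connected and the classical Sullivan construction does not literally apply.  However, Propositions \ref{prop:almost-free} and \ref{prop:period=Magnus} ensure that the same recipe still computes homomorphisms on $(\pi_1(X))_d/(\pi_1(X))_{d+1}\otimes\QQ$, which is all that is required to evaluate a depth-$d$ Milnor invariant.
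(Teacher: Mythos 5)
Your proposal is correct and takes essentially the same approach as the paper: define a DGA homomorphism $m$ from the subalgebra $\Lambda\{a_{i..j}\}_{1 \leq i \leq j \leq d-1}$ of the minimal model of $S^{q_1} \vee \cdots \vee S^{q_{d-1}} \simeq X$ by sending $a_{i..j}$ to (the restriction to $X$ of) $\omega_{i..j}$, then identify the integral formula \eqref{form-4} with the homotopy period $\pi_{a_{1..d-1}}(f_d)$ by matching the primitives $\ph(s\overline{a_{i..j}})$ of the Sullivan recipe with (pullbacks of) the $\ph^l_{i..d}$. Your write-up is, if anything, more careful than the paper's in two respects: you explicitly note that the $\omega$'s must be restricted to $X$ (so that the $\xi_k$ terms in \eqref{eq:d-omega-ij} drop out and the differential becomes quadratic), and you spell out how the $\ell=d-1$ term in \eqref{eq:d-ph-ij} accounts for the leading term $f_d^*m\,x$ in \eqref{eq:Delta}. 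One small inaccuracy: the parenthetical claim that ``some $q_i = 1$'' forces $p_d = 1$ and all other $p_i = m-2$ is wrong in general (e.g., $m=5$, $(p_1,p_2,p_3)=(3,2,2)$ gives $q_1=1$ but $p_3 = 2$); the non--simply-connected case encompasses more than the all-circles situation. This does not derail the argument, since the paper's discussion at the end of \S\ref{S:wedges} (citing F\'elix--Halperin) already justifies running the Sullivan recipe against any wedge of spheres, and the primitives needed for \eqref{eq:Delta} exist by the hypotheses of Theorem \ref{thm:int-formula} regardless of simple connectivity.
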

\begin{proof}
  We continue to use the notation of Theorem \ref{thm:int-formula}.

  Consider the minimal DGA $\mathcal A$ generated by indecomposables $a_{i..j}$,
  where $1 \leq i \leq j \leq d-1$, $a_{i..i}$ has degree $q_i$ and
  $da_{i..i}=0$, and
  \[da_{i..j}=\sum_{k=i}^{j-1} \overline{a_{i..k}}a_{k+1..j}.\]
  By the results in \S\ref{S:wedges}, this is a subalgebra of the minimal model
  of $S^{p_1} \vee \cdots \vee S^{p_{d-1}}$.  Moreover, setting
  \[m(a_{i..j})=\omega_{i..j} \qquad\text{and}\qquad \ph(s\overline{a_{i..j}})=\ph^l_{i..j},\]
  we obtain that our formula \eqref{form-4} for $\mubar(1,\ldots,d)$ is
  exactly the formula for the homotopy period $\pi_{a_{1..d-1}}(f_d)$ computed
  using the construction \eqref{eq:Delta}.
\end{proof}
\begin{prop}
  The Milnor invariant $\mubar(1,\ldots,d)$ is well-defined, i.e., it does not depend
  on choices of primitives.
\end{prop}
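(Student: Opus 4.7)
The plan is to reduce the claim to the well-definedness of homotopy periods via Proposition \ref{Milnor=period}, after which the bulk of the work is handled by Proposition \ref{props-of-periods}.

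By Proposition \ref{Milnor=period}, formula \eqref{form-4} identifies $\mubar(1,\ldots,d)$ with the homotopy period $\pi_{a_{1..d-1}}(f_d)$ for the map $f_d: S^{p_d} \to X$, where $X = S^m \setminus (N_1 \cup \cdots \cup N_{d-1})$. The primitive choices entering the construction split naturally into two groups. First, the $\omega_{i..j}$ for $j \le d-1$ together with the auxiliary $\ph_{i..j}^{l/r}$ for $j \le d-1$ assemble (after restriction to $X$) into the DGA homomorphism $m: \mathcal{A} \to \Omega^*(X)$, where $\mathcal{A}$ is the minimal DGA of Proposition \ref{Milnor=period}. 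Crucially, each $\xi_k$ with $k < d$ vanishes on $X$, so on $X$ equation \eqref{eq:d-omega-ij} collapses to the minimal-model differential $d\omega_{i..j} = \sum_k \overline{\omega_{i..k}}\omega_{k+1..j}$, making $m$ a genuine DGA homomorphism. Second, the $\ph_{k+1..d}^l$ on $N_d$ play the role of the extension $\ph$ in Sullivan's construction.

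Independence from the choices in the second group is immediate from Proposition \ref{props-of-periods}(b). For the first group, my approach is to show that any two valid sets of choices produce homotopic DGA homomorphisms $m^{(0)}, m^{(1)}: \mathcal{A} \to \Omega^*(X)$. I would do this inductively on the depth $j - i$: at each step, the difference $\omega_{i..j}^{(1)}|_X - \omega_{i..j}^{(0)}|_X$ is exact on $X$, since both have the same differential by the inductive hypothesis combined with the reduction above. Linear interpolation then builds a DGA homotopy between $m^{(0)}$ and $m^{(1)}$, and an $m$-variant of the argument in the proof of Proposition \ref{props-of-periods}(c)---the same Stokes-theoretic cancellation, with a fibered integral playing the role of $\int_0^1 f_t^* m$---shows the corresponding homotopy periods agree.

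The main obstacle is the inductive bookkeeping for this DGA-level homotopy. Since each $\omega_{i..j}$ is recursively built from lower-depth primitives, and the recursion includes correction terms supported on various $N_k$, a perturbation at one level must be tracked through all higher levels, and the Stokes cancellations managed with care. Nevertheless, the structure parallels the proofs of parts (b) and (c) of Proposition \ref{props-of-periods}, and I do not anticipate any genuinely new ideas beyond patience with the recursion.
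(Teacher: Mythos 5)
Your proposal correctly reduces the claim to well-definedness of homotopy periods via Proposition \ref{Milnor=period}, which is exactly the route the paper takes; the paper's own proof is a two-line citation of Propositions \ref{Milnor=period} and \ref{props-of-periods}. You go further, however, in noticing something the paper's terse argument does not spell out: the primitive choices split into those determining the DGA homomorphism $m:\mathcal A \to \Omega^*(X)$ (the $\omega_{i..j}|_X$, $j\leq d-1$) and those giving Sullivan's extension $\ph$ (the $\ph^l_{k+1..d}$), and Proposition \ref{props-of-periods}(b) only literally covers independence of the latter. Your plan to handle independence of $m$ by constructing a DGA homotopy $m^{(0)}\simeq m^{(1)}$ and then running the Stokes-theoretic argument of Proposition \ref{props-of-periods}(c) with an algebraic interval in place of a geometric one is the right idea, and is the standard rational-homotopy-theory fact being invoked implicitly.

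Two details in your sketch need tightening. First, the claim that $\omega_{i..j}^{(1)}|_X-\omega_{i..j}^{(0)}|_X$ is exact ``since both have the same differential by the inductive hypothesis'' is not literally correct: $d\omega_{i..j}|_X=\sum_k \overline{\omega_{i..k}}\omega_{k+1..j}|_X$, so if the lower-level forms differ, the differentials differ. The inductive hypothesis only gives you a \emph{homotopy} between the lower-level data, not equality, so one must build the DGA homotopy $H:\mathcal A\to\Omega^*(X)\otimes\Lambda(t,dt)$ generator by generator, solving an obstruction problem at each stage (obstructions lie in the cohomology of $X$, which vanishes in the relevant degrees precisely because the lower-order Milnor invariants are assumed trivial). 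Second, ``linear interpolation'' of two DGA maps is not itself a DGA map; what you want is the standard obstruction-theoretic construction of a homotopy from a minimal source, which is what \cite[Prop.~10.4]{GrMo} (cited in the paper's proof of Proposition \ref{props-of-periods}(b)) supplies. Your own closing caveat about ``tracking perturbations through all higher levels'' shows you are aware of this; with those repairs the argument is complete and arguably more careful than what appears in print.
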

\begin{proof}
  This follows from Proposition \ref{Milnor=period} and the fact that,
  according to Proposition \ref{props-of-periods}, homotopy periods are
  well-defined.
\end{proof}
\begin{prop} \label{shuffle}
  The Milnor invariants $\mubar(i_1,\ldots,i_d)$ satisfy shuffle relations.
  That is, given multiindices $I$ and $J$ whose lengths add up to $d-1$, we have
  \[\sum_\sigma (-1)^{\kappa(\sigma)}\mubar(\sigma(I,J),d)=0,\]
  where $\sigma$ runs over all shuffles of $I$ and $J$ and $\kappa(\sigma)$ is the
  Koszul sign
  \[\kappa(\sigma)=\sum_{i \in I, j \in J: \sigma(j)<\sigma(i)} (q_i+1)(q_j+1).\]
\end{prop}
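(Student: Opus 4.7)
The plan is to reduce Proposition \ref{shuffle} to the shuffle relations on the cofree Lie coalgebra structure of the relevant minimal model, exploiting the identification of Milnor invariants with homotopy periods.

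My first step will be to invoke Proposition \ref{Milnor=period}: for each shuffle $\sigma$ of $I$ and $J$, the invariant $\mubar(\sigma(I,J),d)$ equals the homotopy period $\pi_{a_{\sigma(I,J)}}(f_d)$, computed in the complement $X=S^m \setminus (N_1 \cup \cdots \cup N_{d-1})$ using the minimal DGA $\mathcal{A}$ and homomorphism $m:\mathcal{A} \to \Omega^*(X)$ constructed in the proof of that proposition. Under the identification of $\mathcal{A}$ with a subalgebra of the minimal model of $S^{q_1} \vee \cdots \vee S^{q_{d-1}}$ described in \S\ref{S:wedges}, the indecomposables $a_I$ correspond to the Sinha--Walter elements $x_I$.

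Next, I will apply \cite[Proposition~3.21]{SW1}, recalled in \S\ref{S:wedges}: the $x_I$ satisfy the shuffle relations
\[\sum_\sigma (-1)^{\kappa'(\sigma)}\, x_{\sigma(I,J)} = 0,\]
with sign $\kappa'(\sigma) = \sum_{i \in I,\, j \in J,\, \sigma(j)<\sigma(i)} (q_i-1)(q_j-1)$. A parity check gives $(q_i-1)(q_j-1) \equiv (q_i+1)(q_j+1) \pmod{2}$, so this matches the $\kappa(\sigma)$ in the proposition. Since the assignment $v \mapsto \pi_v(f_d)$ is $\RR$-linear in the indecomposable $v$ (transparent from $\pi_v(f) = \int_{S^n} \ph(dsv)$, with $s$ and $d$ both linear), applying it to the shuffle relation yields
\[\sum_\sigma (-1)^{\kappa(\sigma)}\, \mubar(\sigma(I,J),d) = 0,\]
as claimed.

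The main subtlety, and the only part requiring care, is the case when some $q_i=1$: then the wedge $S^{q_1} \vee \cdots \vee S^{q_{d-1}}$ is not simply connected and the classical Sinha--Walter statement applies only after adapting to the non-nilpotent setting. Here I would appeal to Proposition \ref{prop:period=Magnus}: homotopy periods of depth $r$ in $X$ are controlled by the Magnus expansion of $\pi_1(X)$, and the classical shuffle identities for Magnus coefficients (equivalently, the cofreeness of the shuffle algebra dual to the free Lie algebra on the meridians) provide the required algebraic relation. In either regime the argument has the same shape: an algebraic shuffle identity among indecomposables is transported to an identity on Milnor invariants by linearity of the homotopy-period pairing.
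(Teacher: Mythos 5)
Your proposal is correct and follows essentially the same route as the paper's proof: invoke Proposition \ref{Milnor=period} to realize each $\mubar(\sigma(I,J),d)$ as a homotopy period $\pi_{x_{\sigma(I,J)}}(f_d)$, then apply the Sinha--Walter shuffle identity for the cofree Lie coalgebra of indecomposables and transport it through the linear pairing. Two things you make explicit that the paper leaves tacit are worth noting: the parity check $(q_i-1)(q_j-1)\equiv(q_i+1)(q_j+1)\pmod{2}$, which reconciles the sign convention in \S\ref{S:wedges} with the one in the Proposition statement and is genuinely needed; and your handling of the non-simply connected case, where the paper appeals to the cofree Lie coalgebra structure from \cite{FH} (as recalled at the end of \S\ref{S:wedges}) rather than to the Magnus-expansion picture of Proposition \ref{prop:period=Magnus} — both are valid.
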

\begin{proof}
  By Proposition \ref{Milnor=period}, this follows immediately from the
  corresponding facts for homotopy periods discussed in \S\ref{S:wedges}, starting with \cite[Proposition 3.21]{SW1}.
\end{proof}
\begin{prop} \label{cyclic}
  The Milnor invariants $\mubar(i_1,\ldots,i_d)$ satisfy a signed form of
  cyclic symmetry, namely,
  \[\mubar(1,\ldots,d)=(-1)^{m(q_1+1)}\mubar(2,\ldots,d-1,d,1).\]
\end{prop}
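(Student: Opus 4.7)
The plan is to express both Milnor invariants as integrals over $S^m$ via formulation \eqref{form-3} of Proposition~\ref{Milnor=period} and to relate them through integration by parts on $S^m$.  Using the forms $\omega_{i..j}$ and $\ph^{l/r}_{i..j}$ from Theorem~\ref{thm:int-formula}, we have
\[\mubar(1,\ldots,d) = \int_{S^m} \sum_{k=1}^{d-1} \overline{\omega_{1..k}}\,\ph^l_{k+1..d}\,\xi_d.\]
For the cyclic sequence $(2,\ldots,d-1,d,1)$ I would construct parallel primitives $\tilde\omega$, $\tilde\ph^{l/r}$, reusing the originals on subsequences contained in $(2,\ldots,d)$ or equal to $(1)$, and building new primitives for wrap-around subsequences $(k,\ldots,d,1)$, $k\ge 2$.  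The latter exist because the Brunnian hypothesis---combined with cyclic symmetry at lower depths, the base case being the skew-symmetry of the linking number ($d=2$)---implies the vanishing of the corresponding sub-Milnor invariants; this yields
\[\mubar(2,\ldots,d,1) = \int_{S^m} \sum_{k=1}^{d-1} \overline{\tilde\omega_{2..k+1}}\,\tilde\ph^l_{k+2..d,1}\,\xi_1.\]

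The heart of the argument is to relate the two integrals by exploiting the global identity $\xi_i=d\omega_i$ on $S^m$.  Integration by parts transfers the distinguished factor $\xi_d$ onto the other factors of the integrand; graded super-commutativity then moves $\omega_1$ (which appears inside $\overline{\omega_{1..k}}$ as the leftmost factor $\omega_{1..1}$) to the new distinguished position, converting it back to $\xi_1$ via another application of $\xi_1=d\omega_1$.  Under this transformation each term in the sum for $(1,\ldots,d)$ becomes, up to an accumulated sign, the corresponding term of the cyclic sum.  Intermediate cross-terms arising from the defining equations \eqref{eq:d-omega-ij} and \eqref{eq:d-ph-ij}---namely the contributions involving $\xi_i$ for $1<i<d$---either combine inductively via the same defining equations or vanish by the Brunnian hypothesis (i.e., vanishing of sub-Milnor invariants forces vanishing of certain sub-integrals).

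The main obstacle will be the precise sign tracking and the systematic verification of cancellations.  The sign $(-1)^{m(q_1+1)}$ emerges from combining the Koszul signs for permuting $\omega_1$ (of degree $q_1$) past factors of total degree $m-q_1$ in the $m$-form integrand with the signs accumulated from the integration-by-parts moves; working mod $2$, one must show these together produce the exponent $m(q_1+1)=m(m-p_1)$.  Reconciling all contributions from the many terms in the sums---particularly those involving the auxiliary primitives $\ph^{l/r}_{i..j}$, whose defining equations \eqref{eq:d-ph-ij} mix $\omega$'s and whose behavior near each $N_i$ introduces boundary-like corrections---is the most delicate part of the argument, and would be handled by induction on $d$ together with the inductive hypothesis that cyclic symmetry already holds at lower depths.
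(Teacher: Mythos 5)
The paper's proof goes a quite different route from what you sketch, and your plan contains a conceptual error that would block the argument as stated.  Your central move is to "move $\omega_1$ (which appears inside $\overline{\omega_{1..k}}$ as the leftmost factor $\omega_{1..1}$) to the new distinguished position."  But $\omega_{1..k}$ for $k\geq 2$ is \emph{not} a product with leftmost factor $\omega_1$; by definition \eqref{eq:d-omega-ij} it is an (essentially arbitrary) primitive of a sum of products, and it has no such factorization.  Only the single $k=1$ term of the sum in \eqref{form-3} literally contains the factor $\omega_1$.  So graded commutativity cannot simply "extract" $\omega_1$ from each summand and reconvert it to $\xi_1$ via $\xi_1=d\omega_1$; the proposed rewriting step is not available, and the claimed term-by-term correspondence between the two sums does not exist.

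The paper's actual proof avoids this entirely.  It works with the formulation \eqref{form-1} on $\partial N_d$, extends the definitions of $\omega_{i..j}$ cyclically mod $d$, and defines $\eta_k=\sum_i \overline{\omega_{k..i}}\,\omega_{i+1..(k-1\bmod d)}$, the integrand that computes $\mubar(\sigma_k(1,\ldots,d))$ over the boundary of the last tubular neighborhood in the permuted order.  The key insight is a \emph{purely algebraic} telescoping identity: the alternating sum $\sum_{k=1}^d (-1)^{m\sum_{i<k}(q_i+1)}\eta_k$ vanishes identically, because every product $\omega_{k..i}\,\omega_{i+1..(k-1\bmod d)}$ appears exactly twice with opposite signs.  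Integrating that identity over $\partial N_1$, the terms $\eta_3,\ldots,\eta_d$ drop out by the same vanishing argument used in the proof of Theorem~\ref{thm:int-formula} for $\langle u_1,\ldots,u_d\rangle(\partial N_i)$ with $1<i<d$, leaving precisely $-\mubar(1,\ldots,d)+(-1)^{m(q_1+1)}\mubar(2,\ldots,d,1)=0$.  This exact-cancellation-of-integrands argument is the idea your plan is missing; Stokes-type manipulations on \eqref{form-3} with sign-tracking do not by themselves produce it, and your "handle by induction on $d$" clause is where the actual difficulty lives.
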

In the classical dimension, this proof was given by Mayer in his PhD thesis \cite[p.~37]{Mayer}.
\begin{proof}
  Let $\sigma_k$ be the cyclic permutation of $\{1, \dots, d\}$ that sends $1$ to $k$.
  By relabeling using $\sigma_k$, we can extend the definitions in Theorem \ref{thm:int-formula} to forms $\omega_{i..j}$ whenever $1\leq i, j \leq d$ and $i-j \not\equiv 0$ or $1 \operatorname{mod} d$.
  If we let
  \[\eta_k=\sum_{i=1}^d \overline{\omega_{k..i}}\omega_{i+1..(k-1 \operatorname{mod} d)},\]
  then according to \eqref{form-1},
  \[\mubar(\sigma_k(1,\ldots,d))=\int_{N_{(k-1 \operatorname{mod} d)}} \eta_k.\]
  Now
  \[\sum_{k=1}^d (-1)^{m \sum_{i=1}^{k-1} (q_i+1)}\eta_k=0\]
  because every term $\omega_{k..i}\omega_{i+1..(k-1 \operatorname{mod} d)}$ appears twice with opposite signs.  
  When we integrate this sum over $\partial N_1$, the terms $\eta_3,\ldots,\eta_d$
  evaluate to zero as in the proof of Theorem \ref{thm:int-formula}. Hence
  \[0=\int_{\partial N_1}(\eta_1+(-1)^{m(q_1+1)}\eta_2) = -\mubar(1,\ldots,d)+(-1)^{m(q_1+1)}\mubar(2,\ldots,d-1,d,1). \qedhere\]
\end{proof}
\begin{prop}\label{prop:massey-link-homotopy}
  The Milnor invariant $\mubar(1,\ldots,d)$ is a link homotopy invariant.
\end{prop}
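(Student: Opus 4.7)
The plan is to combine Proposition \ref{Milnor=period}, which realizes $\bar\mu(1,\ldots,d)$ as a homotopy period of $f_d$ in the complement of the other components, with Proposition \ref{cyclic} (cyclic symmetry) and Proposition \ref{props-of-periods}(c) (homotopy invariance of homotopy periods). The argument proceeds by induction on the depth $d$. The base case $d=2$ is the classical fact that the linking number is a link homotopy invariant, and the inductive hypothesis guarantees that lower-depth Milnor invariants are themselves link homotopy invariants, so their vanishing at $t=0$ propagates throughout any link homotopy.

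First, I would reduce to link homotopies that move a single component at a time. A generic link homotopy can be subdivided into finitely many pieces, on each of which, after composing with an ambient isotopy of $S^m$ supported in the complement of the stationary components (which tautologically preserves $\bar\mu$), only one component $f_i$ moves, possibly through maps that are not embeddings.

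Suppose first that the moving component is $f_d$. The forms $\omega_{i..j}$ and $\varphi_{i..j}^{l/r}$ for $i,j<d$ from Theorem \ref{thm:int-formula} are constructed entirely from the stationary components and are therefore independent of the homotopy parameter $t$; together with $f_d$ they assemble into a fixed DGA homomorphism $m:\mathcal{A}\to \Omega^*(S^m\setminus (N_1\cup\cdots\cup N_{d-1}))$ exactly as in the proof of Proposition \ref{Milnor=period}. By the inductive hypothesis, the lower-depth homotopy periods of $f_d$ remain zero throughout the homotopy, so $\bar\mu(1,\ldots,d)=\pi_{a_{1..d-1}}(f_d)$ is defined at every time, and Proposition \ref{props-of-periods}(c) shows it is constant. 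If instead the moving component is $f_i$ with $i<d$, I would iterate Proposition \ref{cyclic} to obtain
\[\bar\mu(1,\ldots,d)=\pm\,\bar\mu(i+1,\ldots,d,1,\ldots,i),\]
placing $f_i$ into the final slot, and then apply the previous case.

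The main obstacle is verifying that the vanishing hypotheses needed to apply Proposition \ref{Milnor=period} to the cyclically permuted sequence---i.e., vanishing of the lower-depth invariants of sub-sequences of the rotation---are preserved under the link homotopy. This reduces via Proposition \ref{cyclic} at lower depth to the inductive hypothesis, and so no new difficulty arises. A secondary technical issue is a rigorous justification of the single-component-at-a-time decomposition of a link homotopy; as an alternative, one may differentiate the integral formulation \eqref{form-4} in $t$ and apply Cartan's magic formula together with Stokes' theorem to show $\frac{d}{dt}\bar\mu=0$ directly, sidestepping the decomposition entirely.
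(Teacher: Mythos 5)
Your proposal is correct and follows essentially the same route as the paper: decompose the link homotopy into pieces that move one component at a time, handle the last component via Proposition \ref{Milnor=period} and the homotopy invariance of homotopy periods (Proposition \ref{props-of-periods}(c)), and handle the other components via cyclic symmetry (Proposition \ref{cyclic}). Your added attention to the inductive preservation of the vanishing hypotheses is a useful detail that the paper leaves implicit, but the argument is the same.
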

The proof given here follows the outline suggested by Porter \cite[p.~42]{Porter}. 
\begin{proof}
  Given a link homotopy, we can, by compactness, deform it to a concatenation of
  homotopies in which only one component moves at a time.  Now given a homotopy
  in which only the $d$th component moves, Proposition \ref{Milnor=period}
  and the homotopy invariance of homotopy periods imply that the Milnor invariant
  $\mubar(1,\ldots,d)$ is the same on both ends.  Cyclic symmetry from Proposition \ref{cyclic} 
  allows us to say the same for any component.
\end{proof}

\begin{prop} \label{prop:connectsum}
    Let $f_1,f_2:S^{p_1} \sqcup \cdots \sqcup S^{p_d} \to S^m$ be two links.  If $\mubar_{f_1}(1,\ldots,d)$ and $\mubar_{f_2}(1,\ldots,d)$ are well-defined, then any connected sum $g=f_1 \mathbin{\#} f_2$ satisfies
    \[\mubar_g(1,\ldots,d)=\mubar_{f_1}(1,\ldots,d)+\mubar_{f_2}(1,\ldots,d).\]
\end{prop}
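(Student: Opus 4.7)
The plan is to prove additivity using the integral formula \eqref{form-3} of Theorem \ref{thm:int-formula}, namely
\[\mubar(1,\ldots,d)=\int_{S^m} \sum_{k=1}^{d-1}\overline{\omega_{1..k}}\ph_{k+1..d}^l\xi_d,\]
by making a choice of forms for $g$ that splits as a sum of forms for $f_1$, for $f_2$, and a tube correction. First, realize $g$ geometrically: place $f_j$ in a round ball $B_j\subset S^m$ with $B_1\cap B_2=\emptyset$, and join the $i$-th components of the two links via $d$ disjoint tubes $T_1,\ldots,T_d$ lying in the exterior $S^m\setminus(B_1\cup B_2)$.

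Next, for each $f_j$, choose the forms $\xi_i^{f_j},\omega_{i..k}^{f_j},\ph_{i..k}^{l/r,f_j}$ prescribed by Theorem \ref{thm:int-formula} so that their supports all lie in $B_j$. This is possible because $S^m\setminus B_j$ is contractible, so each closed form of degree $<m$ encountered in the inductive construction is exact on $S^m\setminus B_j$, and we may subtract off a global exact form at each stage to localize the primitive into $B_j$ without altering its defining differential. By construction, integrating the $f_j$-integrand over $S^m$ (equivalently, over $B_j$) yields $\mubar_{f_j}(1,\ldots,d)$.

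Now build forms for $g$. Set $\xi_i^g=\xi_i^{f_1}+\xi_i^{f_2}+d\nu_i$, where $\nu_i$ is supported in a neighborhood of $T_i$ and is chosen so that $\xi_i^g$ is Poincar\'e dual to the $i$-th component of $g$; such $\nu_i$ exists because $g_i(S^{p_i})$ is cobordant in $S^m$ to $(f_1)_i(S^{p_i})\sqcup(f_2)_i(S^{p_i})$ via the tube. Then inductively set
\[\omega_{i..k}^g=\omega_{i..k}^{f_1}+\omega_{i..k}^{f_2}+\rho_{i..k},\qquad \ph_{i..k}^{l/r,g}=\ph_{i..k}^{l/r,f_1}+\ph_{i..k}^{l/r,f_2}+\sigma_{i..k}^{l/r},\]
where the corrections $\rho_{i..k},\sigma_{i..k}^{l/r}$ are supported away from $B_1\cup B_2$ and are chosen so that the relations \eqref{eq:d-omega-ij} and \eqref{eq:d-ph-ij} for $g$ hold. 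The crucial point enabling this is that products of $f_1$-forms with $f_2$-forms vanish pointwise by disjoint supports, so the target differential splits cleanly into an $f_1$-piece on $B_1$ (already satisfied), an $f_2$-piece on $B_2$ (already satisfied), and a tube-piece that determines the corrections. At each inductive step the obstruction to finding the correction vanishes because the inductive hypothesis that lower-order Milnor invariants for $f_1$ and $f_2$ are trivial is equivalent, by Theorem \ref{thm:int-formula}, to exactness of the relevant lower-order Massey representatives.

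Finally, apply formula \eqref{form-3} to $g$ and expand each factor of the integrand into its $f_1$-, $f_2$-, and tube-part. All cross-terms between $f_1$- and $f_2$-parts vanish by disjoint supports, so
\[\mubar_g(1,\ldots,d)=\mubar_{f_1}(1,\ldots,d)+\mubar_{f_2}(1,\ldots,d)+\text{(tube contribution)}.\]
The main obstacle is to show that the tube contribution vanishes. The strategy is a Stokes' theorem argument on the tube region, which is a disjoint union of contractible balls: one organizes the tube corrections so that the tube integrand is visibly exact on this region and has vanishing boundary pairing with $\partial B_1\cup\partial B_2$. Equivalently, via Proposition \ref{Milnor=period}, the tube contribution computes a homotopy period of a nullhomotopic map (the link formed by the tubes together with trivial caps), which vanishes by the homotopy-invariance statement of Proposition \ref{props-of-periods}.
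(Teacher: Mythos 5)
Your strategy — build forms for $g$ as a superposition of forms for $f_1$, for $f_2$, and tube corrections, then show cross-terms and the tube contribution drop out of the integral formula — is a genuinely different route from the paper's. The paper instead appeals to functoriality of Massey products: it sets $X$ to be the complement of $g$, $Z\subset X$ the complement with the small ``necks'' of the connected-sum tubes deleted, and $Y=X_1\vee X_2$ the wedge of the complements of $f_1$ and $f_2$; then $Z\hookrightarrow X$ and the collapse $Z\to Y$ induce, respectively, an injection and an isomorphism in low degree on cohomology, so $\langle u_1^X,\ldots,u_d^X\rangle$ pulls back to $\langle u_1^{X_1}+u_1^{X_2},\ldots\rangle$ which splits as a sum by disjointness of the wedge summands. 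No forms are chosen, no supports are split, and no error terms appear.

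Your sketch has a genuine gap at the place you identify as ``the main obstacle.'' You do not actually carry out either of the two proposed arguments that the tube contribution vanishes, and neither is clearly viable as stated. First, the Stokes'-theorem route presupposes that the tube corrections $\rho_{i..j},\sigma_{i..j}^{l/r}$ can be chosen with supports in tube neighborhoods and that these corrections themselves make the tube integrand ``visibly exact.'' But a tubular neighborhood of $T_i\simeq S^{p_i-1}\times I$ in $S^m$ is not contractible when $p_i\geq 2$ (it is homotopy equivalent to $S^{p_i-1}$), so $H^*_c$ of such a region is not concentrated in top degree, and the inductive step ``choose $\rho_{i..j}$ supported near the tubes'' can meet a nonzero obstruction; you never address this. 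Second, the ``equivalently via Proposition \ref{Milnor=period}'' paraphrase is too loose: the tube corrections are not pullbacks of forms along any map of spheres, so it is unclear which map's homotopy period you claim the tube contribution computes, and hence the nullhomotopy-invariance argument doesn't attach to anything concrete. There are also smaller elided points — the decomposition $\int_{S^m}\overline{\omega^g_{1..k}}\varphi^{l,g}_{k+1..d}\xi^g_d$ into an $f_1$-piece, $f_2$-piece, and tube piece requires carefully matching supports at $\partial B_j$, since $g_d$'s tubular neighborhood straddles the balls and the tubes, and $\xi_d^{f_j}$ is dual to the full sphere $(f_j)_d(S^{p_d})$ rather than the sphere-minus-a-disk that actually appears in $g$ — but the unaddressed exactness obstruction and the unfinished tube-vanishing argument are the essential issues. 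I'd recommend either completing the Stokes' argument honestly (tracking the obstruction classes) or, more simply, switching to the paper's cohomological argument, which avoids the need to exhibit any of these forms.
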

The connected sum of two links is obtained by cutting out a ball inside each copy of $S^m$ which contains a ball inside each component of the link and identifying the boundaries.  This is not necessarily a well-defined operation: if some of the components have codimension 2, one may have multiple connected sums of the same two links that are topologically inequivalent.  For example, the Borromean rings may be obtained as a connected sum of a link consisting of a Hopf link and a third unlinked circle and its mirror image.  The same pair of links also have a trivial connected sum.  This does not contradict the Proposition \ref{prop:connectsum} since the triple linking number of these two is not well-defined.
\begin{proof}[Proof of Proposition \ref{prop:connectsum}]
    We use the functoriality of Massey products.  Define the following spaces:
    \begin{itemize}
        \item $X$, the link complement of $f_1 \mathbin{\#} f_2$; 
        \item $Y=X_1 \vee X_2$, the wedge of the link complements of $f_1$ and $f_2$; and
        \item $Z$, the subspace of $X$ obtained by removing, for each $i$, a small $D^{p_i} \times (-\epsi,\epsi)$ filling the ``neck'' connecting corresponding components of $f_1$ and $f_2$.
    \end{itemize}
    There are obvious maps
    \[X \xhookleftarrow{\iota} Z \xrightarrow{\pi} Y.\]
    By Alexander duality, $\pi$ induces isomorphisms on (co)homology below degree $m-1$ and a surjective map $H^{m-1}(Y) \to H^{m-1}(Z)$, and $\iota$ induces an injective map $H^*(X) \to H^*(Z)$.  Then $(\pi^*)^{-1}\iota^*(u_i^X)=u_i^{X_1}+u_i^{X_2}$, where the notation $u_i^A$ denotes the Alexander dual of $S^{p_i}$ in the space $A$.  So we see that if the Massey products $\langle u_1^{X_j},\ldots,u_d^{X_j} \rangle$ are unique, so is the Massey product $\langle u_1^X,\ldots,u_d^X \rangle$, and the summation formula holds.
\end{proof}

\subsection{Connection to Koschorke's link homotopy invariants}
\label{S:Massey-Milnor-Koschorke}
In this section, we recall the $\mu$-invariants introduced by Koschorke in \cite{Kos1,Kos2} and relate them to the Milnor invariants from Definition \ref{D:Milnor-invt}.  Although they are defined for somewhat different classes of maps, we show in Proposition \ref{P:Massey=Koschorke} below that the two notions coincide in the overlap between these classes.  Since Koschorke's invariants are invariant under link homotopy by definition, this provides an alternate, perhaps more natural proof of Proposition \ref{prop:massey-link-homotopy} in this restricted setting.  Koschorke also proved a result similar to Proposition \ref{Milnor=period} for links of codimension $\geq 3$.


We begin by introducing the setting for Koschorke's work. Consider a spherical $d$-component \strong{link map} $ f: S^{p_1} \sqcup \cdots \sqcup S^{p_d} \to \mathbb{R}^m $, meaning $f$ is smooth and the component maps $ f_i: S^{p_i} \to \mathbb{R}^m $ have pairwise disjoint images. Unlike embeddings (assumed elsewhere in this paper due to our focus on thickness), link maps allow self-intersections within components.
A \strong{link homotopy} is a homotopy through link maps, preserving disjointness between components but permitting self-intersections within each.

Koschorke's $\mu$-invariants will be defined for \emph{$\kappa$-Brunnian} links, a definition which fits into a ladder of notions of Brunnian link:
\begin{itemize}
\item A $d$-component embedded link is \strong{Brunnian} if every $(d-1)$-component sublink is isotopic to the trivial link.
\item A $d$-component link map is \strong{homotopy Brunnian} if every $(d-1)$-component sublink is link homotopic to the trivial link.
\item A $d$-component link map
\[f=f_1 \sqcup \cdots \sqcup f_d:S^{p_1} \sqcup \cdots \sqcup S^{p_d} \to \RR^m\]
is \strong{$\kappa$-Brunnian} if for every $i$, the map
\[f_1 \times \cdots \times \widehat{f_i} \times \cdots \times f_d:S^{p_1} \times \cdots \times \widehat{S^{p_i}} \times \cdots \times S^{p_d} \to (\RR^m)^{d-1}\]
is nullhomotopic inside the configuration space ${C}_{d-1}(\RR^m)$ of $(d-1)$-tuples of distinct points in $\RR^m$.
\item A $d$-component embedded link is \strong{rational homotopy Brunnian} if the $\mubar$-invariants of Definition \ref{D:Milnor-invt} are well-defined and zero for all $(d-1)$-component sublinks.
\end{itemize}
Note that the homotopy Brunnian property is a priori stronger than the $\kappa$-Brunnian property: for a homotopy Brunnian link each stage of the nullhomotopy can be taken to be a product map.  Moreover, for embedded links we have
\[\xymatrix{\text{Brunnian up to link homotopy} \ar@{=>}[r] & \text{homotopy Brunnian} \ar@{=>}[r] \ar@{=>}[d] & \kappa\text{-Brunnian} \\ &\text{rational homotopy Brunnian.}
}\]
The arrows in the top row are equivalences at least in the classical case $m=3$ \cite[Cor.~6.2]{Kos2} and when all the components have codimension $\geq 3$ \cite[Cor.~6.4]{Kos2}.


Let $N = \sum_{i=1}^d p_i$. 
Then $f$ is $\kappa$-Brunnian if and only if the evaluation map
\[
\kappa(f): \prod_{i=1}^d S^{p_i} \to {C}_d(\RR^m), \quad \kappa(f)(t_1, \ldots, t_d) = (f_1(t_1), \ldots, f_d(t_d)),
\]
factors up to homotopy through the projection $\pi: \prod_{i=1}^d S^{p_i} \to S^N$ which collapses all proper faces to a point, yielding a map $\kappa(f): S^N \to {C}_d(\mathbb{R}^m)$.  By \cite[Proposition 2.2]{Kos2}, this then induces a well-defined homotopy class $\tilde{\kappa}(f) \in \tilde{\pi}_N(\bigvee_{d-1} S^{m-1})$, where $\tilde{\pi}_*(\bigvee_i S^{q_i}) \subseteq \pi_*(\bigvee_i S^{q_i})$ denotes the subgroup 
\[
\tilde{\pi}_\ast\left(\bigvee^{d-1}_{i=1} S^{q_i}\right):=\bigcap^{d-1}_{k=1} \ker \left(\pi_\ast\Bigl(\bigvee^{d-1}_{i=1} S^{q_i}\Bigr)\longrightarrow \pi_\ast\bigl(S^{q_1}\vee\ldots\vee \widehat{S^{q_k}}\vee\ldots\vee S^{q_{d-1}}\bigr)\right).
\]
In our notation from \S\ref{S:wedges}, Koschorke's $\mu$-invariants for a link map $f$ are defined as
\begin{equation}\label{eq:mu-gamma}
\mu_\sigma(f) := x_\sigma(\tilde{\kappa}(f))
\end{equation}
for each permutation $\sigma \in \Sigma_{d-1}$ which fixes $d-1$.
As a refinement of our discussion in \S\ref{S:wedges}, when $N=(d-1)(m-1)-d+2$, the map $g \mapsto \{x_\sigma(g)\}$ defines an isomorphism \cite[Thm.~3.1]{Kos2}
\[\tilde\pi_N\left(\bigvee_{i=1}^{d-1} S^{m-1}\right) \to \ZZ^{(d-2)!},\]
with the $x_\sigma$ forming a dual basis to
\[\{\iota_\sigma=[\iota_{\sigma(1)},[\iota_{\sigma(2)},\cdots[\iota_{\sigma(d-2)},\iota_{\sigma(d-1)}] \cdots ]] : \sigma \in \Sigma_{d-1}\text{ fixes }d-1\}.\]

To compare our invariants to Koschorke's, we use the inclusion $\RR^m \to S^m$ and the induced map on links in these ambient spaces.

\begin{prop}
\label{P:Massey=Koschorke}
    Let $f=f_1 \sqcup \cdots \sqcup f_d:S^{p_1} \sqcup \cdots \sqcup S^{p_d} \to \RR^m$ be an embedded link that is Brunnian up to link homotopy.  Then our Milnor invariants and those of Koschorke agree on $f$, i.e., 
    \[
    \mubar(\sigma(1),\ldots,\sigma(d-2),\sigma(d-1),d)=\mu_\sigma(f)
    \]
    for all permutations $\sigma \in \Sigma_{d-1}$ fixing $d-1$.  If either $m-p_i \geq 3$ for all $i=1,\dots, d$ or $m=3$ and $p_1=\dots=p_d=1$, the agreement also holds for $\kappa$-Brunnian links $f$.
\end{prop}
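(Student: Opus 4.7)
The plan is to verify that, under the Brunnian hypothesis, both $\mubar$ and $\mu_\sigma$ compute the same pairing: the cohomology class $x_\sigma$ evaluated on the homotopy class of $f_d$ in the complement of the remaining components.

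First I would use link homotopy invariance of both invariants (Proposition \ref{prop:massey-link-homotopy} for $\mubar$, and by construction for $\mu_\sigma$) to reduce, in the ``Brunnian up to link homotopy'' case, to the situation where $f_1 \sqcup \cdots \sqcup f_{d-1}$ is a standard unlink. Then the complement $X = S^m \setminus \bigcup_{i<d} N_i$ deformation retracts onto $\bigvee_{i=1}^{d-1} S^{q_i}$, with the $i$th inclusion representing the meridian of $f_i$. By Proposition \ref{Milnor=period}, $\mubar(\sigma(1),\ldots,\sigma(d-2),d-1,d)$ equals the homotopy period $\pi_{x_{(\sigma(1),\ldots,\sigma(d-2),d-1)}}(f_d)$; by the duality between homotopy periods and iterated Whitehead products developed in \S\ref{S:wedges} (see also Proposition \ref{Whitehead/period}), this pairing is precisely $x_\sigma$ evaluated on the class $[f_d] \in \pi_{p_d}(X) \cong \pi_{p_d}(\bigvee_{i<d} S^{q_i})$.

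For Koschorke's side, I would unpack the fibration $C_d(\mathbb{R}^m) \to C_{d-1}(\mathbb{R}^m)$ obtained by forgetting the last point, whose fiber over $(f_1(t_1),\ldots,f_{d-1}(t_{d-1}))$ is $\mathbb{R}^m$ minus $d-1$ points and, with the sublink taken standard, deformation retracts naturally onto a model of $X$. Because the sublink is unknotted and unlinked, one can isotope so that $\kappa(f)|_{\prod_{i<d} S^{p_i}}$ is a constant map, so $\kappa(f)$ restricted to the last factor lands in a single fiber, where it recovers $f_d$ up to homotopy. Chasing this through the Brunnian collapse $\pi \colon \prod_i S^{p_i} \to S^N$ identifies $\tilde{\kappa}(f) \in \tilde{\pi}_N(\bigvee_{i<d} S^{q_i})$ with $[f_d] \in \pi_{p_d}(X)$, whence $\mu_\sigma(f) = x_\sigma(\tilde{\kappa}(f)) = x_\sigma([f_d]) = \mubar(\sigma(1),\ldots,\sigma(d-2),d-1,d)$.

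The main obstacle is making the identification of Koschorke's homotopy class with $[f_d]$ rigorous, since Koschorke's definition lives in a configuration space while the Massey-product invariant naturally sees the link complement. The fibration $C_d \to C_{d-1}$ supplies the bridge, but compatibility with the collapse map $\prod_i S^{p_i} \to S^N$ and with the identification of fiber with link complement requires careful bookkeeping of basepoints and orientations, particularly in verifying that the Brunnian factorization produces the \emph{same} element of $\pi_{p_d}(X)$ rather than one differing by a permutation or sign. For the $\kappa$-Brunnian case under either of the stated hypotheses on $(m, p_i)$, one invokes Koschorke's equivalences \cite[Cor.~6.2, Cor.~6.4]{Kos2} between the $\kappa$-Brunnian and Brunnian-up-to-link-homotopy conditions, reducing it to the case already handled.
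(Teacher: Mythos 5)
Your reduction to the Brunnian case via link homotopy invariance, the use of Proposition~\ref{Milnor=period}, and the handling of the $\kappa$-Brunnian cases via \cite[Cor.~6.2, Cor.~6.4]{Kos2} all match the paper's strategy (for $m=3$ the paper instead argues that both invariants recover Milnor's classical ones, but your route through Cor.~6.2 is also valid). However, the central step—your claimed identification of $\tilde\kappa(f)$ with $[f_d]$—does not work as stated, and this is precisely the nontrivial content that the paper instead obtains by citing Koschorke's Theorem~6.1.

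The dimensional mismatch is real. The fiber of $C_d(\RR^m) \to C_{d-1}(\RR^m)$ is $\RR^m$ minus $d-1$ points, homotopy equivalent to $\bigvee_{d-1} S^{m-1}$, a wedge of $(m-1)$-spheres; by contrast, the link complement $X = S^m \setminus \bigcup_{i<d} f_i(S^{p_i})$ is homotopy equivalent to $\bigvee_{i<d} S^{q_i}$ with $q_i = m - p_i - 1 \leq m-2 < m-1$. The fiber therefore does \emph{not} deformation retract onto $X$; rather $X$ maps into the fiber by forgetting the difference between removing a sphere and removing a point, and this map loses information. Correspondingly, $\tilde\kappa(f)$ lives in $\tilde\pi_N\bigl(\bigvee_{d-1} S^{m-1}\bigr)$ with $N = \sum_{i=1}^d p_i$ (not, as you wrote, in $\tilde\pi_N(\bigvee S^{q_i})$), whereas $[f_d] \in \pi_{p_d}\bigl(\bigvee_{i<d} S^{q_i}\bigr)$, and $p_d < N$; the homotopy groups are in different degrees over different spaces, so no literal identification is possible. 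What is true is that the two functionals named $x_\sigma$—one on each wedge—take the same value on these two different classes (up to sign); but these are functionals on different spaces, as the paper's proof takes care to flag. Establishing that the values agree is exactly Koschorke's \cite[Thm.~6.1]{Kos2}, a genuine theorem whose proof uses more than the fibration $C_d \to C_{d-1}$. Your proposal, in effect, tries to reprove this result but elides the hard part; the paper avoids the issue entirely by citing it.
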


\begin{proof}
    For Brunnian embedded links $f$, Koschorke shows \cite[Thm.~6.1]{Kos2} that $\mu_\sigma(f)=\epsi x_\sigma(f_d)$, where $\epsi=\pm 1$ is a sign depending on $p_1,\ldots,p_d$ and $m$, and where we think of $f_d$ as a map
    \[f_d:S^{p_d} \to S^m \setminus (f_1(S^{p_1}) \cup \cdots \cup f_{d-1}(S^{p_{d-1}})) \simeq S^{q_1} \vee \cdots \vee S^{q_{d-1}}.\]
    (Note that the two instances of $x_\sigma$ in the defining equation \eqref{eq:mu-gamma} of $\mu_\sigma$ and in $x_\sigma(f_d)$ refer to functionals for wedges of spheres of different dimensions.)
    By Proposition \ref{Milnor=period} it follows that Koschorke's $\mu_\sigma(f)$ is equal (again up to sign) to our $\mubar(\sigma(1),\ldots,\sigma(d-1),d)$ for Brunnian links $f$ and, since both sides are invariant under link homotopy, for links $f$ that are Brunnian up to homotopy.\footnote{Since Koschorke's invariants $\mu_\sigma(f)$ are defined via homotopy periods of wedges of spheres, they retain some facets of a Lie coalgebra structure.  This structure, and tree-diagrammatic expressions for these invariants, were studied directly by the first two authors \cite{KKV}.}

    Recall \cite[Cor.~6.4]{Kos2} that when all components have codimension $\geq 3$, $\kappa$-Brunnian embedded links are always Brunnian up to homotopy.  Therefore, in that case, our invariants are defined and coincide with Koschorke's for all $\kappa$-Brunnian embedded links; indeed, they form a complete link homotopy invariant \cite[Prop.~8.4(c)]{Kos2}.  Similarly, in the case $m=3$, Koschorke's invariants \cite[Cor.~6.2]{Kos2} and ours \cite{Porter} both coincide (up to a sign depending only on $d$) with Milnor's classical invariants and therefore with each other.  
\end{proof}

In codimension $2$, $\kappa(f)$ and hence Koschorke's $\mu$-invariants are not complete link homotopy invariants: the Fenn--Rolfsen link \cite{FennR} is a link map $S^2 \sqcup S^2 \to S^4$ which (trivially) is homotopy Brunnian and (trivially) has trivial $\kappa$-invariant, yet it is not homotopically trivial.  On the other hand, it is also not homotopic to \emph{any} embedded link, and indeed for $m \geq 4$ all codimension-$2$ embedded links are homotopically trivial \cite{BPT:2025}.  If one allows both codimension-$2$ and higher-codimension components, it is to our knowledge still conceivable that there may be embedded links that are homotopy Brunnian or $\kappa$-Brunnian but not homotopic to a Brunnian link.

Nevertheless, we conjecture that the two types of invariants agree in all cases:

\begin{conj}
    For $\kappa$-Brunnian embedded links, Koschorke's $\mu_\sigma(f)$ coincides up to sign with our $\mubar(\sigma(1),\ldots,\sigma(d-1),d)$.
\end{conj}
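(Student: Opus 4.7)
The plan is to proceed by induction on $d$, extending the line of argument behind Koschorke's Theorem 6.1 of \cite{Kos2}. For $d = 2$ both invariants coincide with the linking number, providing the base case. For $d \geq 3$, the inductive hypothesis applied to all proper sublinks ensures that the $\kappa$-Brunnian property of $f$ implies that the relevant $\mubar$-invariants of sublinks vanish, so that $\mubar(\sigma(1),\ldots,\sigma(d-1),d)$ is well-defined.

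For the inductive step, the key geometric observation is that an embedded $\kappa$-Brunnian link provides a natural comparison map between the two settings. Since $f$ is $\kappa$-Brunnian, $\kappa(f)$ factors through $\tilde\kappa(f)\colon S^N \to C_d(\RR^m)$, and the $\kappa$-Brunnian condition applied to the subcollection omitting the $d$th component makes the composite with the Fadell--Neuwirth projection $C_d(\RR^m) \to C_{d-1}(\RR^m)$ null-homotopic. This allows $\tilde\kappa(f)$ to be homotoped into the fiber $\RR^m \setminus \{x_1,\ldots,x_{d-1}\} \simeq \bigvee_{d-1} S^{m-1}$ over a basepoint configuration. On the link-complement side, collapsing each component $f_i(S^{p_i})$ to the corresponding point $x_i$ yields a map
\[r\colon S^m \setminus \textstyle\bigcup_{i<d} N_i \longrightarrow \RR^m \setminus \{x_1,\ldots,x_{d-1}\} \simeq \bigvee_{d-1} S^{m-1}\]
whose pullback sends the natural $(m-1)$-dimensional cohomology generators to the Alexander duals $u_i$ used in our Massey products. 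The plan is then to verify that $r \circ f_d$ represents the restriction of $\tilde\kappa(f)$ to the coordinate subsphere $S^{p_d} \hookrightarrow S^N$ corresponding to $\{x_1\} \times \cdots \times \{x_{d-1}\} \times S^{p_d}$; by Proposition \ref{Milnor=period} and Theorem \ref{thm:int-formula}, naturality of homotopy periods along $r$ would then identify $\mubar(\sigma(1),\ldots,\sigma(d-1),d)$ with $\pm\mu_\sigma(f)$.

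The main obstacle is the mismatch between $\bigvee_{d-1} S^{m-1}$, in which Koschorke's invariant is naturally evaluated, and the wedge $\bigvee_{i<d} S^{q_i}$, which models the rational homotopy type of the link complement when it is simply connected. When all $q_i \geq 2$, passing between the two is straightforward via the Andrews--Arkowitz formula \eqref{eq:AA}. When some $q_i = 1$ (codimension-$2$ components with $m \geq 4$), the link complement fails to be simply connected, and one must invoke Proposition \ref{prop:almost-free} together with the pairing analysis of Proposition \ref{Whitehead/period}. This is precisely the regime not covered by Koschorke's Corollaries 6.2 and 6.4 of \cite{Kos2}, and resolving it may require strengthening those corollaries by showing that every $\kappa$-Brunnian embedded link of mixed codimension is homotopy Brunnian, or alternatively a direct cochain-level matching of Sullivan minimal models via the collapse map $r$.
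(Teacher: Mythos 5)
This statement is explicitly an open \emph{conjecture} in the paper, immediately preceded by the sentence ``we conjecture that the two types of invariants agree in all cases,'' so there is no proof in the paper to compare against. The paper proves only the more limited Proposition \ref{P:Massey=Koschorke}: agreement for links Brunnian up to link homotopy, and agreement for $\kappa$-Brunnian links when either all codimensions are at least $3$ or $m=3$ with all $p_i=1$. You correctly locate the unresolved regime (mixed codimension with some $q_i=1$), which is exactly why the paper leaves this as a conjecture; in that sense your proposal is honest and your framing is consistent with the paper.

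However, the comparison map $r$ you propose does not work as stated. You assert that the pullback along $r$ sends the $(m-1)$-dimensional generators of $H^*(\bigvee_{d-1} S^{m-1})$ to the Alexander duals $u_i$, but the $u_i$ live in degree $q_i = m - p_i - 1$, not $m-1$; since $p_i \geq 1$ these degrees disagree, so no such pullback identification can exist. The genuine link between the two pictures is subtler and is precisely the content of Koschorke's \cite[Theorem 6.1]{Kos2}, which passes from functionals on $\tilde\pi_N(\bigvee_{d-1} S^{m-1})$ to functionals on $\pi_{p_d}(\bigvee_{i<d} S^{q_i})$ by a geometric argument; the paper even flags this mismatch explicitly in the parenthetical following its citation of Koschorke's Theorem 6.1. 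You would need to replace $r$ with that translation, and then the issue reduces to exactly the one you acknowledge.

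Finally, note that the paper itself sketches a different route to the conjecture in the paragraph following the conjecture statement: since Koschorke's invariants completely determine $\kappa(f)$, and every realizable tuple of invariants is achieved by a Brunnian link, it suffices to show that the $\mubar$-invariants depend only on $\kappa(f)$. This is a potentially cleaner target than directly matching Sullivan models across the degree-shifted wedges, and you may want to consider it instead of the direct comparison via $r$.
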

Note that this would imply (by induction on $d$) that all $\kappa$-Brunnian embedded links are rational homotopy Brunnian.  Conversely, since Koschorke's invariants completely determine $\kappa(f)$ \cite[p.~311]{Kos2} and every set of invariants is represented by a Brunnian link, to prove the conjecture it would suffice to show that our $\mubar$-invariants are invariants of $\kappa(f)$.  

\section{Milnor invariants with repeated indices}
\label{S:repeated}

\subsection{Definition}
When taking a Massey product, there is no requirement that the cohomology classes whose product we are taking be distinct.  In particular, given a smooth embedding
\[f = (f_1, \dots, f_r):S^{p_1} \sqcup \cdots \sqcup S^{p_r} \to S^m,\]
with $u_i \in H^{q_i}(S^m \setminus f(S^{p_i}))$ indicating as before the Alexander dual of $f(S^{p_i})$, we can define the Massey product
\[\langle u_{\ell_1},\ldots,u_{\ell_d} \rangle\]
for any sequence of indices $\ell_i \in \{1,\ldots,r\}$.  We will show the following:
\begin{thm} \label{thm:repeated-exist}
    Suppose that $\ell_1 \neq \ell_d$ and that all Massey products corresponding to proper consecutive subsequences of $\ell_1,\ldots,\ell_d$ and their cyclic permutations are zero.  Then the Massey product $\langle u_{\ell_1},\ldots,u_{\ell_d} \rangle$ exists and is unique, specifically,
    \[
    \langle u_{\ell_1},\ldots,u_{\ell_d} \rangle=\mubar(\ell_1,\ldots,\ell_d)v_{\ell_1,\ell_d}
    \]
    for a unique integer $\mubar(\ell_1,\ldots,\ell_d)$.
\end{thm}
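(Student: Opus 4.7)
The plan is to extend the proof of Theorem \ref{thm:int-formula}, defining analogous forms indexed by all consecutive subsequences of $(\ell_1,\ldots,\ell_d)$ treated as a cyclic sequence, and using Stokes' theorem to identify the resulting Massey product class. The hypothesis on cyclic permutations is precisely what is needed for the induction to close up.

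I would construct, by induction on the length of a proper consecutive subsequence of the cyclic sequence, the forms $\omega_{\ell_i..\ell_j}\in\Omega^*(S^m)$ together with auxiliary forms $\ph^l_{\ell_i..\ell_j}\in\Omega^*(N_{\ell_j})$ and $\ph^r_{\ell_i..\ell_j}\in\Omega^*(N_{\ell_i})$, satisfying \eqref{eq:d-omega-ij} and \eqref{eq:d-ph-ij}. Existence of $\omega_{\ell_i..\ell_j}$ as a primitive on $S^m$ is automatic by degree. Existence of $\ph^l_{\ell_i..\ell_j}$ (and similarly $\ph^r_{\ell_i..\ell_j}$) requires the integral $\int_{f(S^{p_{\ell_j}})}d\ph^l_{\ell_i..\ell_j}$ to vanish; by the Stokes calculation at the end of the proof of Theorem \ref{thm:int-formula}, this integral equals the value of $\langle u_{\ell_i},\ldots,u_{\ell_j}\rangle$ on $\partial N_{\ell_j}$. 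When $\ell_i\neq\ell_j$ this equals the inductively defined $\mubar(\ell_i,\ldots,\ell_j)$, which is zero by hypothesis; when $\ell_i=\ell_j$, a suitable cyclic rotation of the subsequence has distinct endpoints and is covered by the cyclic-permutation hypothesis, which combined with the repeated-index analogue of Proposition \ref{cyclic} yields the required vanishing.

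Once all primitives are constructed, the closed $(m-1)$-form
\[\Omega=\sum_{k=1}^{d-1}\overline{\omega_{\ell_1..\ell_k}}\,\omega_{\ell_{k+1}..\ell_d}\]
restricts to a representative of the Massey product in $H^{m-1}(S^m\setminus\bigcup_n N_n)$, where $n$ ranges over distinct indices appearing in $(\ell_1,\ldots,\ell_d)$. By Alexander duality, this cohomology group has basis $\{v_{\ell_1,n}:n\neq\ell_1\}$. To show that $[\Omega]$ lies in the subspace spanned by $v_{\ell_1,\ell_d}$, I would check that $\int_{\partial N_n}\Omega=0$ for each distinct index $n\notin\{\ell_1,\ell_d\}$. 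For such $n$, every position $k$ with $\ell_k=n$ satisfies $1<k<d$, so both $\ph^l_{\ell_1..\ell_k}$ and $\ph^r_{\ell_k..\ell_d}$ are defined on $N_n$, and the Stokes computation in the proof of Theorem \ref{thm:int-formula} applies verbatim to give
\[\int_{\partial N_n}\Omega=-\int_{\partial N_n}\sum_{k\,:\,\ell_k=n}\overline{\ph^l_{\ell_1..\ell_k}}\,\xi_n\,\ph^r_{\ell_k..\ell_d}=0,\]
since $\xi_n$ is supported strictly inside $N_n$. The integer $\mubar(\ell_1,\ldots,\ell_d)=\int_{\partial N_{\ell_d}}\Omega$ is then unique, because once all lower-order primitives exist, each one is determined up to an exact form that does not affect the final integral.

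The main obstacle will be the case $\ell_i=\ell_j$ in the inductive construction, since the existence of $\ph^l_{\ell_i..\ell_j}$ then rests on the repeated-index version of cyclic symmetry (Proposition \ref{cyclic}); proving that analogue independently requires the well-definedness of Massey products for a cyclic rotation, creating a web of mutually dependent inductive statements. Organizing a simultaneous induction that keeps existence, uniqueness, and cyclic symmetry consistent at each length will require care, though the underlying calculations are direct generalizations of those in Section \ref{S:Milnor-and-Massey}.
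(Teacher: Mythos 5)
Your proposal takes a genuinely different route from the paper, and it hits a gap that you yourself correctly identify but do not resolve.

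The paper's proof does \emph{not} attempt to extend the differential-forms machinery of Theorem \ref{thm:int-formula} directly to repeated indices. Instead, it uses Lemma \ref{lem:double-exists} to produce, for each component $f_i$, a rationally trivial parallel copy of $f_i$ inside $N_i$ (one for each repetition of the index $i$ in the sequence $\ell$), and then shows that the Massey product $\langle u_{\ell_1},\ldots,u_{\ell_d}\rangle$ is the pullback---under the inclusion $S^m\setminus\bigcup_i N_i \hookrightarrow S^m\setminus\bigcup_{i,j}S^{p_i}_j$---of a Massey product with \emph{distinct} indices. The argument then replaces repeated indices by new indices pointing to the doubles one at a time, decreasing $d-r$ at each step, and checks (via a four-case analysis) that the hypotheses of Theorem \ref{thm:int-formula} are preserved along the way. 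Once all indices are distinct, Theorem \ref{thm:int-formula} applies. This sidesteps the need to ever make sense of a Massey product $\langle u_{\ell_i},\ldots,u_{\ell_j}\rangle$ with $\ell_i = \ell_j$.

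Your approach tries to replicate the primitives-and-Stokes construction of Theorem \ref{thm:int-formula} directly, and it breaks down exactly where you flag: the existence of $\ph^l_{\ell_i..\ell_j}$ when $\ell_i=\ell_j$. The required vanishing is the statement that the (specific representative of the) Massey product $\langle u_{\ell_i},\ldots,u_{\ell_j}\rangle$ evaluates to zero on $\partial N_{\ell_j}$. You propose to deduce this from the vanishing of the Massey product of a cyclic rotation (which has distinct endpoints and is covered by the hypothesis), invoking a repeated-index version of Proposition \ref{cyclic}. But Proposition \ref{cyclic}, and its repeated-index analogue Proposition \ref{prop:properties-repeated}\ref{part:cyclic}, are proved precisely for Massey products that are \emph{already well-defined}, and Theorem \ref{thm:repeated-exist} itself excludes the case $\ell_1=\ell_d$. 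So there is no prior statement establishing well-definedness of $\langle u_{\ell_i},\ldots,u_{\ell_j}\rangle$ with equal endpoints, and cyclic symmetry cannot be invoked for it. You describe this as a ``web of mutually dependent inductive statements'' that ``will require care,'' but you do not supply an induction that actually closes up, and it is not clear that one exists without an additional idea. The paper's doubling construction is exactly the additional idea: it eliminates equal-endpoint subsequences entirely, so only Proposition \ref{cyclic} for distinct indices is ever needed. If you want to rescue your approach, the concrete missing step is a direct Stokes-theoretic argument (not a citation of cyclic symmetry) showing that your constructed forms yield $\int_{\partial N_{\ell_j}}\sum_k\overline{\omega_{\ell_i..\ell_k}}\omega_{\ell_{k+1}..\ell_j}=0$ when $\ell_i=\ell_j$, using only the lower-order vanishing for subsequences with distinct endpoints; as written, the argument is circular.
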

\noindent We define this integer to be the \emph{Milnor invariant} associated to the sequence $(\ell_1,\ldots,\ell_d)$.

 Our proof of Theorem \ref{thm:repeated-exist} roughly generalizes the cabling formula established by Milnor \cite[p. 297, Theorem 7]{Milnor2} in the classical case; see also \cite[Theorem 3.10]{Meilhan}.  That is, we will study these Massey products by relating them to those in which the indices are distinct.  To do so, we need suitable doubles of components, as guaranteed by the next lemma.  As before, for each $i=1,\dots, r$, let $N_i$ be the $i$th component of a tubular neighborhood of $f$.

\begin{lem}
    \label{lem:double-exists}
    There exists an embedding $g: S^{p_i} \to S^m$ such that 
    \begin{enumerate}[(i)]
    \item $g(S^{p_i})$ lies in $N_i \setminus f(S^{p_i})$ and is isotopic to $f_i$ in $N_i$; and
    \item  $g(S^{p_i})$ is rationally trivial in the complement of $f(S^{p_i})$, that is, the Milnor invariants $\mubar(i,r+1)$ and $\mubar(i,i,r+1)$ are both trivial, where $r+1$ is the label on $g$.
    \end{enumerate}
\end{lem}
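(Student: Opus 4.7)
The plan is to build $g$ as a carefully chosen parallel copy of $f_i$ inside $N_i$, modified by local surgeries to kill both Milnor invariants.  First, I would take any initial parallel copy $g_0\colon S^{p_i}\hookrightarrow N_i\setminus f_i(S^{p_i})$ coming from a nonvanishing section of the normal disk bundle $N_i\to f_i(S^{p_i})$.  By construction, $g_0$ is isotopic to $f_i$ inside $N_i$ via radial contraction along the disk fibers, so property (i) is satisfied at the outset and will be preserved under all subsequent modifications that are supported in $N_i$ (or in small balls disjoint from $f_i(S^{p_i})$ but meeting $g_0$).

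Second, I would eliminate $\mubar(i,r+1)$.  By the dimensional constraint \eqref{eq:dim-assumption}, this invariant is identically zero unless $2p_i=m-1$, in which case it coincides with the linking number of $g$ with $f_i(S^{p_i})$.  Since $f_i(S^{p_i})$ is nullhomologous in $S^m$, it bounds a Seifert submanifold $\Sigma$, and replacing $g_0$ by a small pushoff of $f_i$ along an inward normal to $\Sigma$ yields a parallel copy with linking number zero.  Equivalently, two sections of $\partial N_i\to S^{p_i}$ differ by a class in $\pi_{p_i}(S^{q_i})\cong\mathbb{Z}$ whose degree calibrates the difference in linking numbers, so a suitable choice achieves $\mubar(i,r+1)=0$.

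Third, I would eliminate $\mubar(i,i,r+1)$ while preserving the previous step.  Again by \eqref{eq:dim-assumption}, this invariant vanishes identically unless $3p_i=2m-3$, so we may restrict to that numerical case.  The plan is to modify $g$ by a local band-sum in a small ball near a point of $f_i(S^{p_i})$ with a Whitehead-type model: a controlled piece of a Brunnian link sitting in a ball, whose integral Massey-product formula \eqref{eqn:Massey} evaluates to a prescribed nonzero integer for $\mubar(i,i,r+1)$ but to zero for $\mubar(i,r+1)$.  Since the local model is homologically trivial in the complement of $f_i(S^{p_i})$, the linking number is preserved; iterating the modification an integer number of times, with the appropriate orientation, drives $\mubar(i,i,r+1)$ to zero.

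The main obstacle is the construction of this local Whitehead-type model in higher dimensions.  In the classical setting $m=3$, $p_i=1$, the move is supplied by Milnor's cabling construction \cite[Thm.~3.10]{Meilhan}.  In general, one should use the iterated Whitehead-product embeddings from Lemma~\ref{lem:Whitehead} to produce the local model in a small ball and verify via the integral formulas of Theorem~\ref{thm:int-formula} that band-summation modifies $\mubar(i,i,r+1)$ additively by the expected quantity while leaving $\mubar(i,r+1)$ unchanged.  The independence of the two modifications follows from the fact that $\mubar(i,r+1)$ and $\mubar(i,i,r+1)$ pair with linearly independent Whitehead products under the homotopy-period duality of \S\ref{S:periods}, which ensures that the two-step procedure above is not obstructed.
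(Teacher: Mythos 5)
Your proposal follows the same overall strategy as the paper: start from a parallel copy of $f_i$ produced by a nonvanishing section of the normal bundle (giving property (i)), then kill the one or two Milnor invariants by local connected-sum modifications with explicit Whitehead-product embeddings from Lemma \ref{lem:Whitehead}.

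There is one genuine gap: you assert without justification that the normal disk bundle of $f_i(S^{p_i})\subset S^m$ admits a nonvanishing section. This is false for a general vector bundle over a sphere, and its truth here is not trivial: when $2p_i > m$ the Euler class vanishes for dimension or homology reasons, but there remain secondary obstructions in $H^{p_i}\bigl(S^{p_i};\pi_{p_i-1}(S^{q_i})\bigr)$. The paper resolves this by invoking Massey's theorem that the normal bundle of an embedded sphere in a sphere is trivial as a spherical fibration, hence admits a nonvanishing section. Without this input your construction cannot even get started, so you should cite or reprove this fact.

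A smaller point: your two-step procedure and the closing ``independence'' argument are unnecessary, because the conditions $2p_i=m-1$ (for $\mubar(i,r+1)$ to be possibly nonzero) and $3p_i=2m-3$ (for $\mubar(i,i,r+1)$) are mutually exclusive --- their common solution $p_i=1$, $m=3$ has $q_i=1$ odd, which kills the Hopf-invariant case. So at most one of the two invariants is ever defined, and only one modification is needed; there is nothing to check about interference. The paper's ``in either case'' phrasing reflects precisely this dichotomy. Your invocation of linearly independent Whitehead products is not incorrect, but it addresses a situation that does not arise.
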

\begin{proof}
    By a result of Massey \cite{Massey}, the normal bundle of $f(S^{p_i})$ is trivial as a spherical fibration and therefore has a non-vanishing section.
    Such a section allows us to define a map $g$ satisfying property (i).  
    From the correspondence between Massey products and homotopy periods, the two possibly nontrivial rational homotopy groups of $S^{q_i}$ are detected by Milnor invariants.  Namely, if $m=2p_i+1$, then $\mubar(i,r+1)$ measures the linking number of the two components, while if $m=\frac{3}{2}(p_i+1)$ and $q_i$ is even, $\mubar(i,i,r+1)$ measures the Hopf invariant of $g(S^{p_i})$ in the complement of $f(S^{p_i})$.  In either cae, we can modify $g$ on a small ball to trivialize the  invariant.  More specifically, we can take a connected sum with an embedding that represents the inverse of the value of the invariant, since such an embedding exists by Lemma \ref{lem:Whitehead}.
\end{proof}

\begin{proof}[Proof of Theorem \ref{thm:repeated-exist}] 
    We choose multiple parallel spheres $S^{p_i}_j$ in $N_i$ satisfying the properties in Lemma \ref{lem:double-exists}, one for each instance of the index $i$ in $(\ell_1,\ldots,\ell_d)$.  Then with respect to the inclusion map
    \[S^m \setminus \bigcup_i N_i \hookrightarrow S^m \setminus \bigcup_{i,j} S^{p_i}_j,\]
    the Massey product $\langle u_{\ell_1}, \dots, u_{\ell_d}\rangle$ will be the pullback of a Massey product with distinct indices, which will establish the equality in the theorem statement.  It remains to show that, under the hypotheses and with our choice of parallel spheres, the Massey product we would like to pull back is defined and unique.

    We proceed by induction on the length $d$ of the Massey product.  For $d=2$, the indices are distinct by the hypotheses, so the result holds by Theorem \ref{thm:int-formula}.  Assume that the result holds for Massey products of length less than $d$.  We will prove it for length $d$ by adding doubles of components one at a time, decreasing $d-r$ by one with each double.  
    Repeating this step as many times as necessary, we will arrive at a situation in which $d-r=0$, that is, the indices $(\ell_1,\ldots,\ell_d)$ are distinct.  Provided the hypotheses are still satisfied after each step, we will deduce that the Milnor invariant exists and is unique by Theorem \ref{thm:int-formula}.  By the pullback argument above, this will prove the theorem.
    
    To this end, suppose that $f$ and $\ell=(\ell_1,\ldots,\ell_d)$ satisfy the hypotheses, and choose an index $i$ which occurs multiple times in $\ell_1,\ldots,\ell_d$, including at $\ell_k=i$.  Let $g$ be a double of $f_i$ as in Lemma \ref{lem:double-exists}.  Define the map $\tilde f:S^{p_1} \sqcup \cdots \sqcup S^{p_r} \sqcup S^{p_i} \to S^m$ by $\tilde{f}=(f_1, \dots, f_r, g)$.  Define a tuple $\tilde{\ell}=(\tilde\ell_1,\cdots,\tilde\ell_d)$ by replacing $\ell_k$ with $r+1$.  Then we claim that $\tilde{f}$ and $\tilde{\ell}$ still satisfy the hypotheses of the theorem, i.e., that the Massey product is zero for consecutive subsequences of $(\tilde\ell_1,\ldots,\tilde\ell_d)$ and their cyclic permutations.  
    

    We split the verification of this claim into four cases:
    \begin{enumerate}[(a)]
    \item The subsequence does not contain $r+1$, or does not contain any instances of $i$.  In this case the Massey product calculation does not change through the inductive step.
    \item The subsequence is of the form $(i,\ldots,i,r+1)$ or a cyclic permutation thereof.  If the number of $i$'s is one or two, then the Massey product is zero by the construction of $g$.  If the number is greater than two, then the relevant homotopy period is always zero.
    \item The subsequence includes both $r+1$ and at least one instance of $i$, but not both occur at the ends of the subsequence.  Then the Massey product exists and is zero by the induction hypothesis on $d$.
    \item The subsequence is of the form $(i,\ldots,r+1)$ or $(r+1,\ldots,i)$.  In this case, we apply the claim to a cyclic permutation of the subsequence.  Induction on $d$ together with Proposition \ref{cyclic} imply that Milnor invariants with repeated indices satisfy (signed) cyclic symmetry, hence the relevant Massey product is again zero.
    \end{enumerate}
    This completes the proof of the claim and the theorem.
\end{proof}

\begin{rmk}
    When all components have codimension at least $3$, all lower-order Massey products are zero since they live in dimensions $<m-1$, making existence and uniqueness immediate.  In that case, the comparison to invariants with distinct indices in the first paragraph of the proof is still needed to show that the Massey product is a multiple of $v_{\ell_1,\ell_d}$.
\end{rmk}



\subsection{Properties}
Using Theorem \ref{thm:repeated-exist}, we can immediately extend many properties from Section \ref{S:Massey} to Milnor invariants with repeats of indices allowed.  We summarize these here:
\begin{prop}[Properties of Milnor invariants with repeated indices] \ 
\label{prop:properties-repeated}
    \begin{enumerate}[(i)]
    \item \label{part:cyclic} Milnor invariants with repeated indices satisfy the cyclic symmetry of Proposition \ref{cyclic}, when defined.  That is, whenever $\ell_1 \neq \ell_d$ and $\ell_k \neq \ell_{k+1}$,
    \[\mubar(\ell_1,\ldots,\ell_d)=(-1)^{m(q_{\ell_1}+\cdots+q_{\ell_k}+k)}\mubar(\ell_{k+1},\cdots,\ell_d,\ell_1,\cdots,\ell_k).\]
    \item \label{part:period} If one index is not repeated, then the invariant can be expressed as a homotopy period of that component in the complement of the others.  In particular, the shuffle relations of Proposition \ref{shuffle} are satisfied.
    \item \label{part:concordance} Milnor invariants with possibly repeated indices are invariant under concordance. 
    \item \label{part:connectsum} Let $f_1,f_2:S^{p_1} \sqcup \cdots \sqcup S^{p_r} \to S^m$ be two links.  If $\mubar_{f_1}(\ell_1,\ldots,\ell_d)$ and $\mubar_{f_2}(\ell_1,\ldots,\ell_d)$ are well-defined, then any connected sum $g=f_1 \mathbin{\#} f_2$ satisfies
    \[\mubar_g(\ell_1,\ldots,\ell_d)=\mubar_{f_1}(\ell_1,\ldots,\ell_d)+\mubar_{f_2}(\ell_1,\ldots,\ell_d).\]
    In particular, if the codimensions of all the spheres are at least $3$, Milnor invariants define homomorphisms under the connected sum operation.
    \end{enumerate}
\end{prop}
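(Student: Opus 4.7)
The plan is to prove each of the four parts by combining the doubling technique from Theorem \ref{thm:repeated-exist}, which expresses Milnor invariants with repeated indices as pullbacks of distinct-index ones, with direct generalizations of the arguments from Section \ref{S:Milnor-and-Massey}.

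For part (\ref{part:cyclic}), I would fix parallel spheres, one for each occurrence of each repeated index, exactly as in the proof of Theorem \ref{thm:repeated-exist}; this produces a distinct-index tuple $(\tilde\ell_1,\ldots,\tilde\ell_d)$ with $\mubar(\ell_1,\ldots,\ell_d)=\mubar(\tilde\ell_1,\ldots,\tilde\ell_d)$ via pullback along the inclusion of complements. The key observation is that the \emph{same} collection of parallel spheres, after relabeling by the cyclic shift, works for the cyclic permutation $(\ell_{k+1},\ldots,\ell_d,\ell_1,\ldots,\ell_k)$, producing the tuple $(\tilde\ell_{k+1},\ldots,\tilde\ell_d,\tilde\ell_1,\ldots,\tilde\ell_k)$. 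The hypothesis $\ell_k\neq\ell_{k+1}$ ensures this cyclic permutation satisfies the hypothesis of Theorem \ref{thm:repeated-exist}, so pulling back and iterating Proposition \ref{cyclic} $k$ times yields the identity with the correct sign exponent, using $q_{\tilde\ell_i}=q_{\ell_i}$.

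For part (\ref{part:period}), after using part (\ref{part:cyclic}) to place the unrepeated index at position $d$, the proof of Proposition \ref{Milnor=period} extends with only superficial changes. One takes the DGA $\mathcal A$ generated by $a_{i..j}$ for $1\leq i\leq j\leq d-1$, where $a_{i..i}$ has degree $q_{\ell_i}$, so that different positions sharing an index give distinct generators of the same degree; formulation \eqref{form-4} of the Milnor invariant then reads as the homotopy period $\pi_{a_{1..d-1}}(f_{\ell_d})$, interpreted via the map from the link complement to a wedge of spheres $S^{q_{\ell_1}}\vee\cdots\vee S^{q_{\ell_{d-1}}}$ (with possibly repeated summands, one per position in the sequence) obtained by projecting to the corresponding meridians. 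The shuffle relations then follow from the corresponding relations for homotopy periods in wedges of spheres, exactly as in Proposition \ref{shuffle}.

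Parts (\ref{part:concordance}) and (\ref{part:connectsum}) extend the arguments of Section \ref{S:Milnor-props}. For concordance invariance, given a concordance $F:(S^{p_1}\sqcup\cdots\sqcup S^{p_r})\times[0,1]\hookrightarrow S^m\times[0,1]$ between $f_0$ and $f_1$, I would work in the complement $W$ of a tubular neighborhood of $F$; Alexander duality produces classes $U_i\in H^{q_i}(W)$ that restrict to the Alexander duals $u_i$ at either end, and naturality of Massey products then gives that the invariants at the two ends both agree with the one computed in $W$. The main obstacle will be verifying uniqueness of the Massey product in $W$, i.e., that the requisite lower-order Massey products vanish uniquely there; I would handle this by an induction in $d$ parallel to the proof of Theorem \ref{thm:repeated-exist}, combining the assumed vanishing at the two ends with Alexander duality for the cobordism pair. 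For part (\ref{part:connectsum}), the proof of Proposition \ref{prop:connectsum} via the spaces $X$, $Y$, and $Z$ carries over verbatim: expanding the Massey product of the sums $(\pi^*)^{-1}\iota^*(u^X_{\ell_i})=u^{X_1}_{\ell_i}+u^{X_2}_{\ell_i}$ yields only the all-$X_1$ and all-$X_2$ diagonal terms, since cross terms pair Alexander duals from disjoint wedge summands.
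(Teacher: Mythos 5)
Your handling of parts (i), (ii), and (iv) unpacks what the paper treats as immediate: (i) and (ii) are reduced via the doubling of Theorem~\ref{thm:repeated-exist} to the distinct-index case and thence to Propositions~\ref{cyclic} and~\ref{Milnor=period}, while (iv) repeats the three-space argument of Proposition~\ref{prop:connectsum}; all of this tracks the paper's intent.

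The gap is in part (iii), concordance invariance, and you yourself flag it. You work in $W$, the complement of a tubular neighborhood of $F$ inside $S^m\times[0,1]$, invoke ``Alexander duality'' to obtain classes $U_i\in H^{q_i}(W)$, and then defer the existence and uniqueness of the Massey product in $W$ to an unspecified induction on $d$. Two problems. First, $S^m\times[0,1]$ is not a sphere, so Alexander duality does not directly apply in it, and you have not actually computed $H^*(W)$. Second, even with classes $U_i$ that restrict correctly, naturality of Massey products gives only the containment $\iota^*\langle U_1,\dots,U_d\rangle_W\subseteq\langle u_1,\dots,u_d\rangle$ at each end; by itself this yields neither existence of the Massey product in $W$ nor its uniqueness, so your ``main obstacle'' is genuine and unresolved as written. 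The paper dissolves all of this with a single device you miss: extend $F$ by a trivial concordance over $(-\epsi,0]$ and $[1,1+\epsi)$, identify $S^m\times(-\epsi,1+\epsi)$ with $S^{m+1}$ minus two points, and observe that the complement of the extended concordance inside $S^{m+1}$ is homotopy equivalent to the suspension of $S^{\mathbf p}=S^{p_1}\sqcup\cdots\sqcup S^{p_r}$. Alexander duality in $S^{m+1}$ then shows that the inclusion of each slice $S^m\setminus F(S^{\mathbf p}\times\{t\})$ into the full open complement induces an isomorphism on cohomology, and since the preimage of a unique, nonempty Massey product under a cohomology isomorphism is again unique and nonempty, well-definedness at one end transfers to the whole complement and hence to the other end. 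No case-by-case verification of lower-order vanishing is needed; you should replace your sketch with this extension to $S^{m+1}$.
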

Properties \ref{part:cyclic} and \ref{part:period} generalize those demonstrated by Milnor \cite{Milnor2} in the classical setting.  Property \ref{part:concordance} was shown in that setting by Casson \cite{Casson}, strengthening Milnor's result that his invariants are invariant with respect to (topological) isotopy; it can be derived from a result of Stallings \cite{Sta} which Casson evidently came up with on his own.
\begin{proof}
    Properties \ref{part:cyclic} and \ref{part:period} are immediate from Theorem \ref{thm:repeated-exist}.  Property \ref{part:connectsum} is proved in exactly the same way as Proposition \ref{prop:connectsum}.
    
    To prove property \ref{part:concordance}, consider a concordance
    \[F:(S^{p_1} \sqcup \cdots \sqcup S^{p_r}) \times [0,1] \to S^m \times [0,1] \]
    and abbreviate $S^{\mathbf{p}} = S^{p_1} \sqcup \cdots \sqcup S^{p_r}$.
    Extend $F$ via a trivial concordance to $(-\epsi,0]$ and $[1,1+\epsi)$ for some small $\epsi>0$. 
    We can think of $S^m \times (-\epsi,1+\epsi)$ as $S^{m+1}$ without the north and south poles.   
    Then by Alexander duality, for each $t \in [0,1]$, the inclusion 
    \[S^m \setminus F(S^{\mathbf{p}} \times \{t\}) \to 
    S^m \times (-\epsi,1+\epsi) \setminus 
    F(S^{\mathbf{p}} \times (-\epsi,1+\epsi)) \]
    induces an isomorphism on cohomology, since the complement of the right-hand side in $S^{m+1}$ is homotopy equivalent to the suspension of $S^\mathbf{p}$.  Massey product sets are functorial; in particular, the preimage of a unique Massey product under a map inducing an isomorphism in cohomology is also unique.  Therefore, the Massey products we are considering are independent of $t$.  
\end{proof}

The shuffle relations from Proposition \ref{prop:properties-repeated}\ref{part:period} force some Milnor invariants with repeated indices to be zero: for example, $\mubar(1,1,2)$ is $0$ whenever $S^{p_1}$ has even codimension, but it can be nontrivial when the codimension of $S^{p_1}$ is odd.  In fact, one readily sees that $\mubar(1,1,2)$ is the Hopf invariant of $S^{p_2}$ in the complement of $S^m \setminus S^{p_1} \simeq S^{q_1}$.

Proposition \ref{prop:properties-repeated}\ref{part:concordance} implies that Milnor invariants are also invariant under isotopy and ambient isotopy.  Indeed, an isotopy in the sense of homotopy through embeddings gives rise to a concordance, and an ambient isotopy gives rise to a homotopy through embeddings.  (By the isotopy extension theorem, an ambient isotopy is equivalent to a path of embeddings in the $C^k$-topology, $k\geq 1$, whereas a homotopy through embeddings is equivalent to a path in the $C^0$-topology.)  Concordance actually coincides with ambient isotopy in codimension at least 3 \cite{Hudson:1970}, but in general, it is only at least as coarse a relation as isotopy.  It is also at least as fine as link homotopy \cite{Giffen, Goldsmith, BPT:2025}.

All Milnor invariants respect concordance
but those with repeated indices are not in general link homotopy invariants.  (For example, $\mubar(1,1,2,2)$ detects the Whitehead link.)  That is because a link homotopy is no longer a link homotopy if one of the components is doubled.   On the other hand, some of them still carry some homotopy-invariant information.  For example, the homotopy class of $S^{p_2}$ in the complement of $S^{p_1}$ suspends to a link homotopy invariant lying in the stable homotopy group $\pi_{p_1+p_2-m+1}^s$, known as the $\alpha$-invariant \cite[Proposition 4.2]{MR}; see also \cite[\S5]{Ker} and \cite{HaK}.  
The $\alpha$-invariant is congruent mod $2$ to $\mubar(1,1,2)$
for $p_2=3, 7,$ or $15$, 
and for $p_2$ and $m$ such that $\mubar(1,1,2)$ is defined. 
(This follows from the fact that the Hopf map generates the appropriate stable homotopy group, and the Whitehead square, which has Hopf invariant 2, generates the kernel of the suspension map.)

\subsection{Example}
\label{S:mu112}
Consider two-component links $S^{p_1} \sqcup S^{p_2} \to S^m$ with $q_1$ even and dimensions satisfying $p_2=2q_1-1$, for example $p_1=p_2=4k-1$ and $m=6k$ for some $k$.  This is the requirement for the invariant $\mubar(1,1,2)$ to have a chance of being nonzero.  We show that it indeed takes on infinitely many values.

The easiest way to see this is as follows.  Consider the Borromean rings of dimensions $p_1$, $p_1$, and $p_2$ each at least $2$, which can be specified by explicit equations in $\RR^m$, such as
\begin{align*}
    L_1:&& \mathbf x &= 0 & \frac{\lvert \mathbf y \rvert^2}{\alpha^2}+\frac{\lvert \mathbf z \rvert^2}{\beta^2} &= 1 \\
    L_2:&& \mathbf y &= 0 & \frac{\lvert \mathbf z \rvert^2}{\alpha^2}+\frac{\lvert \mathbf x \rvert^2}{\beta^2} &= 1 \\
    L_3:&& \mathbf z &= 0 & \frac{\lvert \mathbf x \rvert^2}{\alpha^2}+\frac{\lvert \mathbf y \rvert^2}{\beta^2} &= 1
\end{align*}
for some fixed $\alpha>\beta>0$ and $\mathbf x$, $\mathbf y$, and $\mathbf z$ being vectors of dimensions $q_1$, $q_1$, and $q_2$.  Every pair of components is unlinked, but the Milnor triple linking number is $\pm 1$.

Now use a tube to surger together the two equidimensional components $L_1$ and $L_2$, creating a new component $L$.  If the codimensions of the components are at least $3$, a general position argument gives that
\[\mubar_{L \cup L_3}(1,1,2)=\sum_{i,j=1,2} \mubar_{L_1 \cup L_2 \cup L_3}(i,j,3)=\pm 2;\]
in other words $L_3$ has Hopf invariant $2$ in the complement of $L$.  (Indeed, the map $S^{p_2} \to S^{q_1}$ associated to $L \cup L_3$ is given by post-composing $[\iota_1, \iota_2]: S^{p_2} \to S^{q_1}\vee S^{q_1}$ by the fold map $S^{q_1}\vee S^{q_1} \to S^{q_1}$.  Alternatively, one could proceed as in Example \ref{ex:borr-rings} to calculate $\mubar_{L \cup L_3}(1,1,2)$.)

Similarly, links whose $\mubar(1,1,2)$ is exponential in the thickness can be constructed using the construction in \S\ref{S:exp} and surgering together two of the components.  

In the special case $p_1=p_2=:p$, there is also a nontrivial $\mubar(1,2,2)$.  Since $q:=m-p-1$ is assumed even, we must have $m=6k$, $p=4k-1$, and $q=2k$.  For the link $L \cup L_3$ just described, $\mubar(1,2,2)$ is zero.  (Indeed, $L$ is trivial in the complement of $L_3$, since it is the sum of the classes of $L_1$ and $L_2$, both of which are zero.)  See 
\cite[Lemma 3.4]{Skopenkov:2024} for further details and
\cite[\S6]{Haef4} 
for a generalization.  By taking connected sums of this link and the  link obtained by switching the labels on the two components, we get links with $(\mubar(1,1,2),\mubar(1,2,2))=(a,b)$ for any $a,b \in 2\ZZ$.  When $k \neq 1,2,4$, the Hopf invariant cannot be odd, so these are all possible values.

In the remaining cases of links $S^{4k-1} \sqcup S^{4k-1} \to S^{6k}$ with $k=1,2,4$, by combining work of Kervaire \cite[\S5]{Ker} with the EHP exact sequence
\[0 \to \pi_{2k}(S^{2k}) \xrightarrow{[\iota,\iota]} \pi_{4k-1}(S^{2k}) \xrightarrow{S} \pi_{4k}(S^{2k+1}) \to 0,\]
we see that $\mubar(1,1,2)$ and $\mubar(1,2,2)$ are still the same modulo 2.  It remains to find a link with $\mubar(1,1,2)=\mubar(1,2,2)=1$.  This is given by the following construction due to Zeeman \cite[\S10]{HaefLinks}.  Consider points in $S^{6k}$ as consisting of two vectors $(\mathbf x,\mathbf y)$ where $\mathbf x$ is $4k$-dimensional and $\mathbf y$ is $2k$-dimensional.  Then embed the two spheres via the maps
\[\mathbf x \mapsto (\mathbf x,\mathbf 0) \qquad\text{and}\qquad \mathbf x \mapsto \left(\frac{1}{\sqrt{2}}\mathbf x, \frac{1}{\sqrt{2}}h(\mathbf x)\right),\]
where $h$ is the Hopf map.  
(Similarly, for any $k$, there is a generating set consisting of the image of a generator of $\pi_{4k-1}(S^{2k})$ under this construction together with one of the two links obtained by tubing together the Borromean rings.  See work of the second author \cite[Theorem E, part (c)]{Koytcheff:2025} for a generalization of this generating set to families of long links.)
    
Haefliger also shows in \cite[\S10]{HaefLinks} that $\mubar(1,1,2)$ and $\mubar(1,2,2)$, together with the knotting invariants of the components, form a complete isotopy invariant of links up to torsion; see also \cite{Sko}.

\subsection{Rational isotopy classes in codimension at least 3}
The example worked out in \S\ref{S:mu112} is a special case of a more general fact.  Consider links $S^{p_1} \sqcup \cdots \sqcup S^{p_r} \to S^m$ with $p_i \leq m-3$.  They form an abelian group $L_{\mathbf p}^m=L_{(p_1,\ldots,p_r)}^m$ under connected sum, and Crowley, Ferry and Skopenkov \cite{CFS} give a thorough calculation of $L_{\mathbf p}^m \otimes \QQ$.  Their work together with that of Haefliger \cite{HaefLinks} implies that Milnor invariants, together with the Haefliger knotting invariants \cite{Haef} of the individual components, give a complete rational isotopy invariant, as we will show in Proposition \ref{prop:Q-iso-classes}.  (Compare this with Koschorke's \cite[Prop.~8.4(c)]{Kos2}, which shows that, with the same condition on dimensions, Milnor invariants with distinct indices give a complete \emph{integral} link homotopy invariant.)

We begin by reviewing the relevant information from those works.  First, the group structure means that isotopy classes of links can be written as a direct sum
\[L_{\mathbf p}^m \cong (L_{\mathbf p}^m)_U \oplus \bigoplus_{i=1}^r L_{p_i}^m \cong
\bigoplus_{\text{subsequences $\mathbf p'$ of }\mathbf p} (L_{\mathbf p'}^m)_B,\]
where $(L_{\mathbf p}^m)_U$ is the subgroup of links whose components are unknotted, 
each $L_{p_i}^m$ is a group of knots, 
and $(L_{\mathbf p}^m)_B$ is the subgroup of Brunnian links.  Haefliger gave a long exact sequence computing $(L_{\mathbf p}^m)_U$, whose rationalization splits \cite[Lemma 1.3]{CFS}  into short exact sequences
\begin{equation*}
0 \to (L_{\mathbf p}^m)_U \otimes \QQ \xrightarrow{\lambda \otimes \QQ} \Lambda_{\mathbf p}^{\mathbf q} \otimes \QQ \xrightarrow{w \otimes \QQ} \Pi_{\mathbf p}^{\mathbf q} \to 0,
\end{equation*}
where $\mathbf q=(q_1,\ldots,q_r)$,
\begin{align*}
    \Lambda_{\mathbf{p}}^{\mathbf{q}} &= \bigoplus_{i=1}^r
    \ker \left( \pi_{p_i}\left(\bigvee_{j=1}^r S^{q_j}\right) \to \pi_{p_i}(S^{q_i}) \right) 
    \cong \bigoplus_{i=1}^r \pi_{p_i}\left(\bigvee_{j=1}^r S^{q_j}; \, S^{q_i}\right), \text{ and}\\
    \Pi_{\mathbf{p}}^{\mathbf{q}} &=
\ker \left(\pi_{m-1}\left(\bigvee_{i=1}^r S^{q_i}\right) \to \bigoplus_{i=1}^r \pi_{m-1}(S^{q_i}) \right).
\end{align*} 
The map $\lambda$ is given by $(\lambda_1, \dots, \lambda_r)$, where each linking coefficient $\lambda_i$ is the equivalence class of a parallel copy of the $i$th component in the link complement, which 
is homotopy equivalent to  
the wedge-sum $S^{q_1} \vee \dots \vee S^{q_r}$.  
The map $w$ is the sum $\sum_{i=1}^r w_i$ where $w_i(\alpha)$ is the Whitehead product $[\alpha, \iota_i]$ with the inclusion $\iota_i$ of the $i$th summand.  We use only the injectivity of $\lambda \otimes \QQ$ below. 

\begin{prop}
\label{prop:Q-iso-classes}
    Milnor invariants define an injective homomorphism 
    \begin{align*}
    (L_{\mathbf p}^m)_U \otimes \QQ &\to \bigoplus_{I \in \mathcal I(\mathbf p)} \QQ
    \end{align*}
    given by $f \mapsto (\mubar_f(I))_{I \in \mathcal I(\mathbf p)}$, where $\mathcal I(\mathbf p)$ consists of all multiindices $I=(i_1, \dots, i_d)$, $d \geq 2$, such that equation \eqref{eq:dim-assumption} holds.
\end{prop}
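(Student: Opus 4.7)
The plan is to combine the rational injectivity of $\lambda$ from the displayed short exact sequence with the identification of Milnor invariants as homotopy periods, which by rational homotopy theory determine classes in simply connected wedges of spheres. Since $\lambda \otimes \QQ$ is injective, it suffices to show that if every $\mubar_f(I)$ vanishes rationally, then each linking coefficient $\lambda_i(f)$ vanishes in $\Lambda_{\mathbf p}^{\mathbf q} \otimes \QQ$.

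Each $\lambda_i(f)$ lies in the kernel of the collapse $\pi_{p_i}(\bigvee_j S^{q_j}) \to \pi_{p_i}(S^{q_i})$. Since the codimension hypothesis $p_j \leq m-3$ gives $q_j \geq 2$, the wedge is simply connected, and the Hilton--Milnor theorem together with the discussion of \S\ref{S:wedges} tells us that $\pi_{p_i}(\bigvee_j S^{q_j}) \otimes \QQ$ pairs nondegenerately with the span of homotopy period functionals $x_I$ of degree $p_i$, indexed by multiindices $I = (i_1,\ldots,i_{d-1})$ satisfying $q_{i_1} + \cdots + q_{i_{d-1}} - (d-2) = p_i$. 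Consequently $\lambda_i(f) \otimes \QQ$ is determined by the values $\pi_{x_I}(\lambda_i(f))$ over all such $I$.

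The core remaining step is to identify $\pi_{x_I}(\lambda_i(f))$ with $\pm \mubar_f(I, i)$. When the entries of $(i_1,\ldots,i_{d-1},i)$ are all distinct, this follows directly from Proposition \ref{Milnor=period}, since $\lambda_i(f)$ is represented by a pushoff of $f_i$ whose class in the complement of the other components coincides with that of $f_i$ itself. When the entries repeat, $\mubar_f(I, i)$ is defined via Theorem \ref{thm:repeated-exist} by cabling $f$ to a link $\tilde f$ with distinct indices, the added parallel copies being chosen via Lemma \ref{lem:double-exists} to be rationally trivial in the complement of the sphere they double. Proposition \ref{Milnor=period} applied to $\tilde f$ then expresses $\mubar_f(I, i)$ as a homotopy period of the final cabled component in the complement of the others, which under the rational equivalence $S^m \setminus \tilde f \simeq \bigvee_j S^{q_j}$ is precisely $\pi_{x_I}(\lambda_i(f))$; the triviality condition from Lemma \ref{lem:double-exists} rules out spurious contributions from self-linking of the parallel copy with $f_i$, and cyclic symmetry (Proposition \ref{prop:properties-repeated}\ref{part:cyclic}) handles the case $i_1 = i$ by reordering the multiindex.

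Combining these steps, vanishing of all $\mubar_f(I)$ rationally forces each pairing $\pi_{x_I}(\lambda_i(f))$ to vanish, whence $\lambda_i(f) \otimes \QQ = 0$ for every $i$ and $f$ is rationally trivial in $(L_{\mathbf p}^m)_U$. The inductive nature of Milnor invariants (they are well-defined only after lower-depth ones vanish) fits naturally into an induction on depth, since rationally the choice ambiguity disappears once the lower-order invariants are zero. The main obstacle I foresee is the repeated-index identification above: verifying that the cabling from Lemma \ref{lem:double-exists}, processed through Proposition \ref{Milnor=period} applied to $\tilde f$, faithfully translates into the pairing $\pi_{x_I}(\lambda_i(f))$ in the original wedge, and handling the degenerate multiindices $(i,\ldots,i)$ (which fall outside the scope of Theorem \ref{thm:repeated-exist}) either by a direct definition of the corresponding invariants or by observing that the relevant self-bracket components of $\lambda_i(f)$ are automatically trivial for pushoffs of embedded spheres.
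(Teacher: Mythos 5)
Your proposal follows essentially the same route as the paper: reduce to the rational injectivity of $\lambda$, express $\lambda_i(f)$ via Hilton--Milnor as an element of the free graded Lie algebra, and recover the Milnor invariants $\mubar_f(I,i)$ as pairings of $\lambda_i(f)$ with the homotopy period functionals $x_I$. You are more explicit than the paper (which just cites Proposition \ref{Whitehead/period}) about the repeated-index cabling and about the degenerate multiindices $(i,\dots,i)$; your observation that the self-bracket component of $\lambda_i(f)$ is automatically killed by membership in $\ker(\pi_{p_i}(\bigvee_j S^{q_j})\to\pi_{p_i}(S^{q_i}))$ correctly disposes of the latter.
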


\begin{proof}
    First, the reformulation \eqref{eq:dim-assumption-reformulation} of the assumed equality \eqref{eq:dim-assumption} shows that in codimension $\geq 3$, there are no lower-order invariants whose vanishing we need to check to ensure that the $\mu(I)$ are defined; therefore, by Proposition \ref{prop:properties-repeated}\ref{part:connectsum}, each $\mu(I)$ defines a homomorphism.  It also ensures that $\mathcal I(\mathbf p)$ is finite.  

    For injectivity, suppose $f \in (L_{\mathbf p}^m)_U \otimes \QQ$ satisfies $\mu_f(I)=0$ for all $I\in \mathcal I(\mathbf p)$.  Consider $\lambda_i(f)$ for any $i\in \{1, \dots, r\}$. 
    By the Hilton--Milnor theorem, 
    each direct summand of $\Lambda_{\mathbf{p}}^\mathbf{q}$ is 
    a subspace of the free graded Lie algebra on $r$ generators, where the $j$th generator corresponds to the inclusion $\iota_j$, and where the bracket corresponds to the Whitehead product.  
    Thus $\lambda_i(f)$ lies in the span of all iterated Whitehead products of the $\iota_j$ whose dimension is $p_i$.
    But the coefficient in $\lambda_i(f)$ of any such product $[\iota_{j_1}, [\iota_{j_2}, \dots [\iota_{j_{d-2}}, \iota_{j_{d-1}}] \dots]]$, $d\geq 2$, is equal to $\mubar(j_1, \dots, j_{d-1}, i)(f)$ by Proposition \ref{Whitehead/period}.
    Thus $\lambda_i(f)=0$, so $\lambda(f)=0$, and by the injectivity of $\lambda \otimes \QQ$, we obtain $f=0$, as desired.
\end{proof}

\subsection{Quantitative considerations}
We are able to prove estimates for Milnor invariants with repeated indices in terms of the thickness, as we are for those with distinct indices.  However, in some cases these estimates may not be sharp.  This is related to the quantitative aspect of doubling a component: can we double a component without decreasing the thickness too much?  In other words, can we find a section of the normal bundle of the component which does not ``wind too quickly'' around the zero section?

Naively, one can attempt to choose a section of the normal bundle of a $p$-sphere embedded in an $m$-sphere, skeleton by skeleton over a geometric triangulation of the embedded sphere.  Before getting to the $(m-p)$-skeleton, this process does not run into any obstructions, and one can always choose the most efficient possible representative.  At stage $m-p$, one runs into an obstruction cochain representing the Euler class of the normal bundle.  
As noted in the proof of Lemma \ref{lem:Whitehead}, this class is always cohomologically trivial, but if it is hard to trivialize (i.e., if the primitive has large $L^\infty$-norm), then every section of the normal bundle must have large winding on every simplex, and therefore must be thin.

We first point out that this cannot happen if $p<m/2$, since the obstruction is never reached:
\begin{prop} \label{prop:normal-easy}
    If $p<m/2$, then a $\tau$-thick embedding of $S^p$ in $S^m$ has a double in which both components are $c_{m,p}\tau$-thick, where $c_{m,p}>0$ is a constant.  In particular this holds for $p=1$ and $m=3$.
    \qed
\end{prop}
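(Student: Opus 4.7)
The plan is to exhibit a unit section $s$ of the normal sphere bundle $S(\nu)$ of $f(S^p)$ in $S^m$ whose Lipschitz constant is bounded by $O(\tau^{-1})$, and then use the exponential map to produce the double as the push-off $g(x) = \exp_{f(x)}(\tfrac{\tau}{2}s(x))$.  The dimensional hypothesis $p<m/2$ (equivalently, $p\leq q$ in the notation of the paper) enters in that the obstruction to extending a unit section of $\nu$ cellwise across a $k$-cell lies in $\pi_{k-1}(S^q)$, which vanishes for $k\leq p\leq q$.  This is essentially the same input that made the proof of Lemma \ref{lem:Whitehead} possible.

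To carry this out quantitatively, I would first use the curvature bound $|K|\leq C\tau^{-2}$ on $f(S^p)$ implied by thickness to choose a smooth triangulation $T$ of $f(S^p)$ with simplices of diameter $\sim\tau$ and uniformly bounded geometric distortion.  Over each simplex I would trivialize $\nu$ by parallel transport in the ambient Levi--Civita connection; thickness also controls the curvature of $\nu$, so these local trivializations are bi-Lipschitz with distortion $O(1)$.  Then I would inductively construct a unit section $s$ over the skeleta of $T$: choose arbitrary unit vectors on the $0$-skeleton, and at each higher stage extend across a $k$-simplex by an explicit geodesic coning in the fibre $S^q$, whose Lipschitz norm is controlled by the product of the Lipschitz norm of the boundary data and the diameter of the simplex in $T$.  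Induction through the $p$-skeleton produces $s$ with global Lipschitz constant $O(\tau^{-1})$.  Combined with standard bi-Lipschitz bounds for the exponential map on the $\tau$-normal disk bundle, the push-off $g$ becomes an embedding of thickness at least $c_{m,p}\tau$ satisfying condition (i) of Lemma \ref{lem:double-exists}.  For condition (ii), one modifies $g$ by a local connected sum inside a ball of diameter $O(\tau)$, correcting the value of the relevant linking or Hopf-type invariant exactly as in the proof of Lemma \ref{lem:double-exists}; the correcting sublink may be chosen thick by applying Lemma \ref{lem:Whitehead} at the appropriate scale, and the connected sum affects thickness only by a constant factor.

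The main obstacle is the inductive construction of $s$ with uniformly bounded Lipschitz constant.  While the underlying obstruction theory is trivial under our dimensional assumption, the quantitative bookkeeping---ensuring that the Lipschitz constant does not blow up as one passes through the skeleta of $T$---requires some care, and amounts to a quantitative form of cellular obstruction theory.  In the special case $p=1$, $m=3$ singled out in the proposition, the argument simplifies considerably: the Seifert framing can be constructed by explicitly parallel-transporting a single normal vector around the knot, and its Lipschitz constant is directly controlled by the knot's curvature, which is $O(\tau^{-1})$.
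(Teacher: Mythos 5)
Your plan for producing the thick parallel copy --- constructing a unit normal section skeleton-by-skeleton over a $\tau$-scaled geometric triangulation of $f(S^p)$, relying on $\pi_{k-1}(S^q)=0$ for $k\le p\le q$, and keeping Lipschitz control through the inductive extensions --- is precisely the recipe the paper sketches in the paragraph preceding this proposition, and is the right way to make the omitted proof rigorous.  The quantitative bookkeeping you flag as the main obstacle is genuine but manageable: there are only $p$ extension steps, each multiplying the Lipschitz constant of the section by a factor depending only on $p$ and $q$, provided the triangulation is fine enough that the boundary data on each simplex stays inside a fixed hemisphere of the fiber.

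However, your last step overreaches.  Proposition \ref{prop:normal-easy} asserts only condition (i) of Lemma \ref{lem:double-exists} --- a thick parallel copy.  It does not claim that condition (ii) can also be arranged at thickness $\sim\tau$; indeed, the remark immediately following the proposition observes that for $p=1$, $m=3$ the parallel copy may link the original with linking number up to $\tau^{-4}$, and Proposition \ref{prop:normal-1-3} exists precisely to remove that linking at the cost of degrading thickness to $\sim\tau^2$.  Your proposed fix --- a local connected sum inside a ball of diameter $O(\tau)$, affecting thickness only by a constant factor --- cannot work: a $\tau$-thick curve confined to such a ball has bounded length and can change the linking number with the original by only $O(1)$, nowhere near $\tau^{-4}$.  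Relatedly, the ``Seifert framing'' you invoke for $p=1$, $m=3$ is by definition the framing with zero self-linking, which is what Proposition \ref{prop:normal-1-3} constructs, not this one; parallel transport of a normal vector produces some framing, generically not the Seifert one.  If you delete the condition-(ii) portion, the remainder proves the proposition as stated.
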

On the other hand, in the classical case $p=1, m=3$, this double may have large linking number with the original component, up to $\tau^{-4}$.  To ``unwind'' this linking number and produce a double which does not link with the original component, we may need to reduce the thickness:
\begin{prop} \label{prop:normal-1-3}
    When $p=1$ and $m=3$, a $\tau$-thick embedding of $S^p$ in $S^m$ has a double whose linking number with the original embedding is zero, in which the additional component is $c\tau^2$-thick, where $c>0$ is a constant.
    \qed
\end{prop}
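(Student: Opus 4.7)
The plan is to build the desired double as a uniform-twist adjustment of any smooth parallel copy of $K := f(S^1)$, where the adjustment is chosen precisely to cancel the self-linking, and then to estimate how much this adjustment degrades the thickness.

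Since $K$ is $\tau$-thick, its tubular neighborhood of radius $\tau$ fits inside $S^3$, so the length $L$ of $K$ satisfies $L \leq 2\pi/\tau^2$.  Choose a smooth unit normal field $v_0$ along $K$, for instance the Bishop (parallel-transport) frame corrected by a bounded uniform twist so that $v_0$ is closed, and put $K_0 := \exp(\epsi v_0)$ for some $\epsi \in (0,\tau)$.  Then $n_0 := \link(K, K_0) \in \ZZ$, and the Calugareanu--White--Fuller formula decomposes it as $n_0 = \mathrm{Wr}(K) + \mathrm{Tw}(v_0)$, where $|\mathrm{Tw}(v_0)| = O(1)$ by our near-Bishop choice of $v_0$.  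For a $\tau$-thick knot, the Buck--Simon-type writhe bound gives $|\mathrm{Wr}(K)| \leq C(L/\tau)^{4/3}$; combining with $L \leq 2\pi/\tau^2$ one obtains the crucial rate estimate $|n_0|/L \leq C/\tau^2$, uniformly over the admissible range of $L$.

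Now set $\omega := -2\pi n_0/L$ and define the adjusted framing $v(s) := R_{\omega s} v_0(s)$, where $R_\theta$ denotes rotation by angle $\theta$ about the unit tangent $f'(s)$.  Then $v$ is smooth and closed, and Calugareanu's formula applied again yields $\link(K, \exp(\delta v)) = n_0 + \omega L/(2\pi) = 0$ for every $\delta \in (0, \tau)$.  Take $\delta := c\tau^2$ with $c > 0$ a small constant, and define $g := \exp(\delta v) : S^1 \to S^3$.  This $g$ is automatically disjoint from $K$, is an embedding (for small enough $c$), and has linking number zero with $f$ by construction; it remains only to bound its thickness from below.  The curvature of $g$ at any point decomposes into the curvature inherited from $K$, which is at most $1/\tau$, and a helical contribution from the rotation of $v$ at rate $\omega$, which by a direct computation with the model helix of radius $\delta$ and frequency $\omega$ is at most $C\delta\omega^2 \leq C\tau^2 \cdot \tau^{-4} = C/\tau^2$.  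So the total curvature is $O(1/\tau^2)$, and the reciprocal-curvature reach of $g$ is $\geq c'\tau^2$.  Global reach is immediate: two points of $g$ whose base points on $K$ are at arclength distance greater than $\tau/10$ apart lie in cross-sections of the $\tau$-tube that are at distance $\geq \tau - 2\delta \geq \tau/2$, which easily exceeds $c'\tau^2$.

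The main obstacle is the writhe estimate $|\mathrm{Wr}(K)| \leq C(L/\tau)^{4/3}$.  This is classical for $\tau$-thick knots in $\RR^3$, but in $S^3$ it requires either a direct spherical version of the Gauss integral argument or a controlled stereographic reduction; without the $4/3$ exponent, the resulting thickness would be strictly worse than $\tau^2$.  A secondary technical point is the choice of starting framing $v_0$, but here the Bishop frame plus a bounded closing twist is perfectly adequate to keep $\mathrm{Tw}(v_0)$ of order $1$.  The remaining steps are routine differential-geometric estimates about helices in a bounded-curvature ambient setting.
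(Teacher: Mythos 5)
Your proof takes a genuinely different route from the paper's. The paper derives Proposition \ref{prop:normal-1-3} as the $p=1$, $m=3$ instance of the simplicial/coisoperimetric argument in Proposition \ref{prop:normal-codim2}: one triangulates $S^3$ at scale $\tau$, represents the intersection with $K$ by a $2$-cocycle $w$, finds an integer $1$-cochain primitive $a$ with $\|a\|_\infty\leq C\tau^{-1}$ (the \cite{CDMW} coisoperimetric inequality), and reads off from $a|_{\Sigma_K}$ a framing whose twist per edge is $\leq C\tau^{-1}$, hence twist rate $\leq C\tau^{-2}$ and a pushoff at distance $\sim\tau^2$; the coboundary relation $\delta a=w$ is what guarantees zero linking. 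Your approach instead combines the C\u{a}lug\u{a}reanu--White--Fuller decomposition with a writhe estimate to bound the needed uniform twist rate. Both yield the same $\tau^{-2}$ twist rate and $\tau^2$ pushoff radius, so the conclusion is the same; the paper's route is more combinatorial and stays within the toolkit the paper has already set up, whereas yours is more classically differential-geometric.

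On the substance: your argument is essentially sound. The worry you raise about needing the $(L/\tau)^{4/3}$ bound in $S^3$ is legitimate as written but not actually necessary. What you really need is $|n_0|/L\leq C\tau^{-2}$, and for this the much cruder bound $|\mathrm{Wr}(K)|\leq\mathrm{acn}(K)\leq CL/\tau^2$ suffices (note that $2\alpha+\beta\leq 4$ for an estimate $|\mathrm{Wr}|\lesssim L^\alpha\tau^{-\beta}$ is exactly the condition, and both $(\alpha,\beta)=(4/3,4/3)$ and $(1,2)$ sit on the boundary). The $(1,2)$ bound follows from a simple packing estimate: the length of $K$ lying in a ball $B(x,d)$ is $\leq Cd^3/\tau^2$, so $\int_K |\gamma(s)-\gamma(t)|^{-2}\,dt\leq C/\tau^2$ (far from the diagonal), and the near-diagonal piece contributes $O(1)$ crossings per $\tau$-arc in the average-crossing-number picture. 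This argument is local and scale-invariant, so it carries over to $S^3$ without fuss; you can drop the Buck--Simon exponent entirely and sidestep the stereographic-projection concern. One further minor point: the curvature of $g=\exp_K(\delta v)$ does not literally decompose as ``curvature of $K$ plus helix curvature,'' since $v''$ contains a $\kappa'$ term that is \emph{not} bounded by powers of $\tau^{-1}$; but that term points in the tangential direction $T$ and is annihilated by $g'\times g''$, so your final bound $\kappa_g=O(\tau^{-2})$ is correct. With these adjustments the proof is complete.
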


We also consider the case $p=m-2$, which is the most relevant for the quantitative questions we consider.  When $p>1$, the issue of linking numbers does not arise, but we get a similar estimate:
\begin{prop} \label{prop:normal-codim2}
    If $p=m-2$, then a $\tau$-thick embedding $f:S^p \to S^m$ has a double in which the additional component is $c_{m,p}\tau^2$-thick, where $c_{m,p}>0$ is a constant.
\end{prop}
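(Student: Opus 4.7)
The plan is to construct the double in the form $f'(x) = \exp_{f(x)}(\delta\, s(x))$, where $s:f(S^p)\to N$ is a unit section of the rank-$2$ normal bundle $N$ with controlled first and second derivatives, and $\delta = c(m,p)\tau^2$ is a small scale parameter. The existence of a nowhere-zero section is immediate from the cohomological triviality of the Euler class of $N$, as already recorded in the proof of Lemma \ref{lem:Whitehead}. The content of the proposition lies in producing such a section that is geometrically tame enough that the resulting double is $c\tau^2$-thick.

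To construct $s$ quantitatively, I would first translate the $\tau$-thickness hypothesis into bounds on the normal connection. Since $|II_f|\leq 1/\tau$, the Ricci equation gives that the curvature $2$-form $\Omega$ of the normal $SO(2)$-connection on $N$ satisfies $|\Omega|\leq C(m,p)/\tau^2$ pointwise. I would then fix an intrinsic geodesic triangulation of $(f(S^p),g)$ with simplices of diameter $\sim\tau$ and define local unit sections by parallel transport over the $0$- and $1$-skeleta. The monodromy around each $2$-simplex $\triangle$ is $\int_\triangle \Omega = O(1)$, and since the Euler class vanishes, these monodromies assemble into a $2$-cocycle that is a coboundary. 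I would then choose a primitive $1$-cochain of bounded size and use it to adjust the local sections by rotations of rate $O(1/\tau)$ along edges of length $\sim\tau$, producing a global unit section $s$ with $|\nabla s|\leq C/\tau$ and consequently $|\nabla^2 s|\leq C/\tau^2$.

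With $s$ in hand, I set $\delta = c\tau^2$ for $c$ small enough depending on $m$ and $p$, and verify the $c\tau^2$-thickness of $f'(x)=\exp_{f(x)}(\delta s(x))$ via three checks. First, $f'$ is an immersion because $\delta|\nabla s| = O(c\tau) \ll 1$. Second, the second fundamental form satisfies $|II_{f'}| \leq |II_f| + O(\delta|\nabla^2 s|) = O(1/\tau)$, which is dominated by $1/(c\tau^2)$ whenever $c\tau\leq 1$. Third, the doubly-critical self-distance is at least $c\tau^2$: since $f'$ lies in the $\delta$-tubular neighborhood of $f$, any two points $x\neq y \in S^p$ whose images under $f'$ come within $c\tau^2$ of each other satisfy $|f(x)-f(y)|\leq |f'(x)-f'(y)|+2\delta<\tau$, which by $\tau$-thickness of $f$ forces $x,y$ to be geodesically close in $S^p$, and separation then follows from the immersion estimate.

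The main obstacle I expect is the obstruction-theoretic step: producing a primitive $1$-cochain of the monodromy cocycle with $L^\infty$ norm bounded independently of the intrinsic diameter of $f(S^p)$, which is only partially controlled by $\tau$-thickness (curvature bounded by $1/\tau^2$, but diameter and injectivity radius depend on the global shape of the embedding). A cleaner alternative is to work via Hodge theory on $(f(S^p),g)$: decompose $\Omega=d\alpha$ with $\alpha$ coclosed, then use elliptic estimates in the bounded-geometry setting to bound $\|\alpha\|_{L^\infty}$ by $\|\Omega\|_{L^\infty}$ up to geometric constants. The desired section $s$ is then the parallel unit section of the flat connection $\nabla-i\alpha$ (viewing $N$ as a complex line bundle), and $|\nabla s|$ is governed by $\|\alpha\|_{L^\infty}$. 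Tracking the constants through this elliptic estimate is the technical heart of the argument.
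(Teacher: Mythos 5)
You correctly identified the structural skeleton (trivial Euler class, build a unit section via obstruction theory, bound the rate at which it winds) and, crucially, you correctly identified the gap in your own argument: your construction works \emph{intrinsically} on $M = f(S^p)$, and there you cannot control the size of a primitive $1$-cochain of the monodromy/Euler cocycle using only the data that $\tau$-thickness gives you. Bounded curvature $\lesssim \tau^{-2}$ does not control the intrinsic geometry enough; the intrinsic diameter can be as large as $\tau^{-(m-1)}$, and for $m>4$ the $2$-form $\Omega$ is not a top form on the $(m-2)$-manifold $M$, so the only available intrinsic coisoperimetric estimate (Lemma \ref{lem:genIP}) is exponential in $\tau^{-1}$, which destroys the conclusion. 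Your Hodge-theoretic variant has the same defect: bounding the coclosed primitive $\alpha$ with $\|\alpha\|_{L^\infty}\lesssim\|\Omega\|_{L^\infty}$ requires a lower bound on the relevant eigenvalue of the Hodge Laplacian on $M$, which is a global quantity not controlled by bounded geometry alone (think of a long flat tube).

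The paper's proof closes this gap by moving the obstruction-theoretic computation out of $M$ and into the ambient sphere. It approximates $M$ by a subcomplex $\Sigma_M$ of a mesh-$\tau$ grid triangulation $\mathcal{T}$ of $S^m$, realizes the Thom class of the normal bundle as a $2$-cocycle $w\in C^2(\mathcal{T})$ via intersection numbers, and observes that $w$ is automatically a coboundary in $S^m$. The primitive $a\in C^1(\mathcal{T})$ with $\delta a = w$ is then chosen using a coisoperimetric inequality \emph{on the fixed manifold $S^m$} with a mesh-$\tau$ triangulation (citing CDMW), which yields $\|a\|_\infty \leq C_m\tau^{-1}$ with a constant depending only on $m$ --- precisely because the ambient geometry is fixed. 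Restricting $a$ to $\Sigma_M$ then prescribes how many times each $\tau$-scale edge must wind around $M$, giving a section of controlled winding and hence a double of thickness $\sim\tau^2$. That transfer from intrinsic to ambient is the one idea your proposal is missing, and without it the argument does not go through at the claimed polynomial rate.
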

\begin{proof}
    As explained in the proof of Lemma \ref{lem:Whitehead}, in codimension 2, the Euler class is the only obstruction to trivializing the normal bundle,
    and it is trivial.

    Now we can fix a triangulation of $M=f(S^p)$ by approximating it using simplices of a triangulation $\mathcal T$ of $S^m$ as follows.  We fix a bilipschitz homeomorphism from $S^m$ to the boundary of a cube, subdivide the cube using a grid of side length $c_m\tau$, for sufficiently small $c_m>0$, and then triangulate the grid cubes.  Then there is an $L_m$-bilipschitz homeomorphism, again for a constant $L_m$, from $M$ to a (nearby) subcomplex $\Sigma_M$ of $\mathcal T$; see \cite{Whi,BKW}.

    Assume without loss of generality that $M$ is in general position with respect to this triangulation.  Then we can choose a simplicial representative $w \in C^2(\mathcal T)$ of the Thom class in $H^2(S^m)$: for each simplex, take the intersection number of a simplex with $M$ (which is always between $-1$ and $1$).  Restricting this to $\Sigma_M$ gives us a representative of the Euler class, namely, the obstruction to extending the $1$-skeleton of $\Sigma_M$ to a double of $M$.

    Now we choose a simplicial $1$-chain $a \in C^1(\mathcal T)$ whose coboundary is $w$.  By \cite[\S3]{CDMW}, we can choose this so that $\lVert a \rVert_\infty \leq C_m\tau^{-1}$.  (This is a kind of coisoperimetric inequality, as discussed further in \S\ref{S:coIP}.  It is easy to see via differential forms that one can choose a primitive of this size with real coefficients, but one has to do somewhat more work to get the result with integer coefficients.)  Restricting $a$ to $\Sigma_M$, we obtain the number of times we need to wind each edge of $\Sigma_M$ around $M$ in order to get a map on the $1$-skeleton which extends to a map on the $2$-skeleton (and therefore to all of $\Sigma_M$) which does not intersect $M$.  It follows that such an extension can be obtained with thickness $\sim \tau^2$.
\end{proof}
Note that once we have chosen one section, we can create more parallel components by interpolating between the first two.  Thus we can create an arbitrary number of components whose thickness has the same order, with only implicit constants depending on the number of components.


  

\section{Upper bounds on Milnor invariants using Massey products}
\label{S:upper-bounds}

Here we collect into one statement the parts of Theorems \ref{main} and \ref{main-repeated} that we will prove in this section:

\begin{thm} \label{thm:upper-bound}
  Suppose that $f:S^{p_1} \sqcup \cdots \sqcup S^{p_r} \to S^m$ is an embedding of thickness $\tau$ whose Milnor invariants indexed by proper subsequences of $(\ell_1,\ldots,\ell_d)$ are trivial.
  \begin{enumerate}[(a)]
  \item \label{case:exp} In all cases, $\mubar(\ell_1,\ldots,\ell_d) \leq \exp(C(m,d)\tau^{-m})$.
  \item \label{case:2} When $d=2$, $\mubar(\ell_1,\ell_2) \leq C(m)\tau^{-(m+1)}$.
  \item \label{case:poly} If the $\ell_j$ are distinct and one of the $p_{\ell_j}$ is $1$ (in which case the rest must be $m-2$), then
      \[\mubar(\ell_1,\ldots,\ell_d)\leq C(m,d)\tau^{-(m+1)(d-1)}.\]
  \item \label{case:poly-rep} If one of the $p_{\ell_j}$ is $1$ and not all the $\ell_j$ are distinct, then
      \[\mubar(\ell_1,\ldots,\ell_d)\leq C(m,d)\tau^{-2(m+1)(d-1)}.\]
  \item \label{case:Lip} If the $\ell_j$ are distinct and in addition $f$ is a locally $L$-bilipschitz map (with respect to the round radius $1$ metric on each sphere), then
      \[\mubar(\ell_1,\ldots,\ell_d) \leq C(m,d)\tau^{-\sum_{i=1}^d (q_i+1)}L^{(2m-5)(d-2)} = C(m,d)\tau^{-(m+2d-3)}L^{(2m-5)(d-2)}.\]
  \end{enumerate}
\end{thm}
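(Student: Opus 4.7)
The plan is to combine the integral expressions for $\mubar(\ell_1,\ldots,\ell_d)$ from Theorem \ref{thm:int-formula} and Theorem \ref{thm:repeated-exist} with coisoperimetric inequalities controlling the sup-norms of the primitives $\omega_{i..j}$ and $\varphi^{l/r}_{i..j}$ appearing in those formulas.  We would first fix Poincar\'e duals $\xi_i$ supported on the tubes $N_i$ with $\|\xi_i\|_\infty \lesssim \tau^{-(q_i+1)}$ and $\|\xi_i\|_1 \lesssim 1$, and then develop coisoperimetric estimates on $S^m$ and on each $N_i$: given a closed exact form $\alpha$, produce a primitive $\beta$ whose $L^\infty$ norm is controlled by $\|\alpha\|_\infty$ or $\|\alpha\|_1$, with coisoperimetric constant depending explicitly on $\tau$.

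For the exponential bound of part \ref{case:exp}, we apply these inequalities inductively to the recursions \eqref{eq:d-omega-ij} and \eqref{eq:d-ph-ij}.  Each recursion step multiplies the sup-norm by a polynomial factor in $\tau^{-1}$ and sums over the terms in the right-hand side; bookkeeping through the $d$ levels of nesting then gives bounds of shape $\|\omega_{i..j}\|_\infty,\,\|\varphi^{l/r}_{i..j}\|_\infty \lesssim \tau^{-A(m,d)}$.  Substituting into \eqref{eqn:Massey} and integrating over $N_d$ (whose volume is of order $\tau^{q_d+1}$) gives the claimed $\exp(C(m,d)\tau^{-m})$ bound.  For part \ref{case:2}, one argues directly from the classical linking number estimate $|\mathrm{lk}| \lesssim \mathrm{vol}(L_{\ell_1})\,\mathrm{vol}(L_{\ell_2})$, using that each component satisfies $\mathrm{vol}(L_{\ell_i}) \lesssim \tau^{-(q_{\ell_i}+1)}$ and that $q_{\ell_1}+q_{\ell_2}+2=m+1$ under the dimensional constraint.

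For the polynomial bounds of parts \ref{case:poly} and \ref{case:poly-rep}, we invoke Proposition \ref{Milnor=period} together with the near-freeness of the complement of the $(m-2)$-spheres (Proposition \ref{prop:almost-free}): after using the cyclic symmetry of Proposition \ref{cyclic} (resp.\ Proposition \ref{prop:properties-repeated}\ref{part:cyclic}) to ensure the $1$-dimensional component is the last in the Milnor sequence, the invariant becomes a homotopy period of $f_d$ in a wedge of circles, which by Proposition \ref{prop:period=Magnus} is read off as a Magnus expansion coefficient.  A direct iterated-integral bound for such a coefficient, in terms of the length of $f_d(S^1)$ (of order $\tau^{-(m-1)}$) and the sup-norms of the $1$-forms $\omega_i$ along it, yields the polynomial estimate $\tau^{-(m+1)(d-1)}$.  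Case \ref{case:poly-rep} incurs a further factor of $\tau^{-1}$ per doubled component, quantified by the doubling estimate of Proposition \ref{prop:normal-codim2}, giving the exponent $2(m+1)(d-1)$.

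For part \ref{case:Lip}, the bilipschitz assumption provides uniform control on the area of each embedded sphere and on the geometry of each tube $N_i$, so the coisoperimetric constants appearing in each recursion step can be made explicit polynomials in $\tau^{-1}$ and $L$; reworking the inductive argument of part \ref{case:exp} with these sharper constants yields the stated $\tau^{-(m+2d-3)}L^{(2m-5)(d-2)}$ bound.  The main obstacle throughout is the calibration of the coisoperimetric toolkit: it must be sharp enough, with the correct scaling in $\tau$ and $L$, to propagate cleanly through the nested recursions and produce the precise exponents stated in parts \ref{case:exp}, \ref{case:poly}, \ref{case:poly-rep}, and \ref{case:Lip}, without accumulating extraneous factors of $\tau^{-1}$ or (in the Lipschitz case) of $L$.
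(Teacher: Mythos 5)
Your overall scheme---integral formula for the Milnor invariant plus coisoperimetric bounds on the primitives appearing in it---matches the paper's, and parts \ref{case:exp} and \ref{case:Lip} are in the same spirit.  But a few steps need attention, and for part \ref{case:poly} you propose a genuinely different route than the paper takes.

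For part \ref{case:exp}, you invoke the recursions \eqref{eq:d-omega-ij}, \eqref{eq:d-ph-ij}, and \eqref{eqn:Massey} from Theorem \ref{thm:int-formula}.  But Theorem \ref{thm:int-formula} is stated and proved only for \emph{distinct} indices, while part \ref{case:exp} covers all index sequences.  The paper handles this by running the induction instead on the simpler formula of Proposition \ref{simpler} (with $\omega_{i..j}$ defined as primitives in the complement $S^m \setminus (N_i \cup \cdots \cup N_j)$), precisely because, as noted after that proposition, it extends verbatim to repeated indices.  You would either need to make this substitution or extend Theorem \ref{thm:int-formula} to repeats, which is not immediate.

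For part \ref{case:2}, the stated bound $|\operatorname{lk}| \lesssim \vol(L_{\ell_1})\vol(L_{\ell_2})$ is not a ``classical estimate'' that can simply be cited: without the thickness hypothesis it is false (two unit-length circles can link arbitrarily many times), and with it, the obvious pointwise bound on the Gauss kernel gives $|x-y|^{-(m-1)} \lesssim \tau^{-(m-1)}$, yielding $\tau^{-2m}$ rather than $\tau^{-(m+1)}$.  To recover the sharp exponent from the Gauss integral one must integrate the kernel against the thickness-controlled volume growth of each component in dyadic annuli.  The paper instead bounds $\int_{S^m}\xi_1\wedge\omega_2$ directly via $\|\xi_i\|_\infty \lesssim \tau^{-(q_i+1)}$ and a primitive with the same norm (Lemma \ref{lem:genIP} on the fixed manifold $S^m$), which gives the claimed exponent in one line.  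Your route can be made to work, but it is not a one-line citation.

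For part \ref{case:poly}, your approach---convert the invariant into a homotopy period of the circle component via Propositions \ref{cyclic} and \ref{Milnor=period}, identify it with a Magnus coefficient via Proposition \ref{prop:period=Magnus}, and bound the iterated integral by the length of the loop and the sup-norms of the one-forms---is a genuinely different argument from the paper's, which stays with the nested integral formula \eqref{eqn:Massey} and simply substitutes the sharper coisoperimetric constants of Lemmas \ref{lem:bottomIP} (for the circle) and \ref{lem:topIP}/\ref{lem:tubeIP} (for the $(m-2)$-spheres), each $\lesssim \tau^{-(m-1)}$, into the general recursion.  Your approach buys some conceptual transparency (length $\tau^{-(m-1)}$ times $(d-1)$ iterated one-forms of norm $\tau^{-2}$ gives $\tau^{-(m+1)(d-1)}$ directly), while the paper's has the advantage that the same recursive machinery handles every case of the theorem uniformly.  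If you pursue the iterated-integral route, you should say explicitly that the Magnus coefficient admits a Chen-type iterated-integral expression (as in the Monroe--Sinha letter-linking integrals) and check that the $\omega_i$ used satisfy the required closedness/exactness conditions.

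For part \ref{case:poly-rep}, the stated mechanism is wrong even though the exponent is right.  You write that each doubled component ``incurs a further factor of $\tau^{-1}$''---this would change the exponent from $(m+1)(d-1)$ to $(m+1)(d-1)+(d-r)$, not to $2(m+1)(d-1)$.  What the paper actually does is use Proposition \ref{prop:normal-1-3} or \ref{prop:normal-codim2} to produce parallel copies of each repeated component whose thickness is $\sim\tau^2$, then runs the distinct-index bound \emph{with $\tau$ replaced by $\tau^2$ throughout}, which doubles the entire exponent.  Since the final bound must apply the weakest thickness uniformly to all primitives in the recursion (not just to those associated with doubled components), the ``$\tau\mapsto\tau^2$ everywhere'' substitution is the correct mechanism.
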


Part \ref{case:2}, the linking number bound, is proved by Freedman and Krushkal using the Gauss formula, but can also be shown using norms of differential forms:
\begin{proof}[Proof of Theorem \ref{thm:upper-bound}\ref{case:2}]
    We assume $\ell_1=1$ and $\ell_2=2$ for ease of notation.  Note that $p_1+p_2=q_1+q_2=m-1$.  Then
    \[\mubar(1,2)=\operatorname{Lk}(S^{p_1},S^{p_2})=\int_{S^m} \xi_1 \wedge \omega_2,\]
    where $\xi_i \in \Omega^{q_i+1}(S^m)$ is a Poincar\'e dual to $f(S^{p_i})$ supported on the $\tau/2$-neighborhood $N_i$ of $f(S^{p_i})$, and $\omega_i$ is a primitive of $\xi_i$.  We can choose $\xi_i$ so that 
    \[\lVert\xi_i\rVert_\infty \leq C(m)\tau^{-(q_i+1)}\]
    for some constant $C(m)$, and therefore, by applying Lemma \ref{lem:genIP} below to the fixed manifold $M=S^m$, we can choose $\omega_i$ so that $\lVert\omega_i\rVert_\infty \leq C(m)\tau^{-(q_i+1)}$ as well.  It follows that
    \[\lvert\mubar(1,2)\rvert \leq C(m)\tau^{-(m+1)}. \qedhere\]
\end{proof}
The other statements will follow from the integral formulas for Milnor invariants in \S\ref{S:Massey} together with some (co)isoperimetric lemmas bounding the operator norms of primitives of forms in the tubular neighborhoods of link components.

\begin{rmk}\label{rmk:constants}
    Throughout this section, as in the first proof above, we frequently use notations like ``$C(a)$'' and ``$C(u,v)$'' to denote unspecified constants depending respectively on $a$ and on $u$ and $v$, which may change from one instance to the next.
\end{rmk}

\subsection{Coisoperimetric inequalities} \label{S:coIP}
Now we discuss coisoperimetric inequalities for differential forms and simplicial cochains.  
These inequalities will say that under some geometric hypothesis, a cochain or differential form $\omega$ has a primitive $\alpha$ whose norm is bounded by a constant, called a \emph{coisoperimetric constant}, times the norm of $\omega$.

\begin{lem}
\label{lem:matrix}
  Let $A$ be an $M \times N$ integer matrix such that
  \begin{itemize}
  \item all entries are $\pm 1$ or $0$ and
  \item every row has at most $p$ nonzero entries.
  \end{itemize}
  Let $b \in \mathbb R^m$ be a vector.  If the equation $Ax=b$ has a solution
  over $\mathbb R$, then it has a solution with
  \[\lVert x \rVert_\infty \leq \min\{m,n\}p^{\min\{m,n\}-1}\lVert b \rVert_\infty.\]
\end{lem}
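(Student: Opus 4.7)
The plan is to reduce the system to a nonsingular square subsystem and then apply Cramer's rule together with a crude combinatorial bound on the determinant of a sparse $\pm1$-matrix.

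Let $r = \operatorname{rank}(A) \leq \min\{m,n\}$. Since $Ax=b$ is consistent, $b$ lies in the column space of $A$. I will choose $r$ linearly independent columns of $A$; setting the remaining coordinates of $x$ to zero reduces the task to solving an $m \times r$ system $A'' y = b$ of full column rank. Next, picking $r$ linearly independent rows produces an invertible $r \times r$ submatrix $A'$, with the corresponding subvector $b'$ of $b$. The square system $A'y = b'$ has a unique solution $y_0$; since $A''y=b$ is consistent and $A''$ has full column rank, $y_0$ is also the unique solution of the overdetermined system, so extending $y_0$ by zeros gives a genuine solution $x$ of $Ax = b$.

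To bound $\|x\|_\infty = \|y_0\|_\infty$, I apply Cramer's rule: $(y_0)_i = \det(A'_i)/\det(A')$, where $A'_i$ denotes $A'$ with its $i$th column replaced by $b'$. Since $A'$ has entries in $\{0,\pm 1\}$ and is invertible, $\det(A')$ is a nonzero integer, hence $|\det(A')| \geq 1$. Expanding $\det(A'_i)$ along the $i$th column yields
\[
\lvert \det(A'_i)\rvert \;\leq\; r\,\|b\|_\infty \max_{j,k} |M_{j,k}|,
\]
where each $M_{j,k}$ is an $(r-1)\times(r-1)$ minor of $A'$.

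The key estimate is $|M_{j,k}| \leq p^{r-1}$. This follows from the Leibniz expansion: a permutation $\sigma$ contributes a nonzero product $\prod_\ell A'_{\ell,\sigma(\ell)}$ only if $\sigma(\ell)$ lies in the support of row $\ell$, and each such support has size at most $p$. Ignoring the bijectivity of $\sigma$ gives at most $p^{r-1}$ nonzero terms, each of absolute value $1$. Combining everything,
\[
\|x\|_\infty \;\leq\; r\,p^{r-1}\,\|b\|_\infty \;\leq\; \min\{m,n\}\,p^{\min\{m,n\}-1}\,\|b\|_\infty,
\]
since $t\mapsto tp^{t-1}$ is nondecreasing for $p\geq 1$. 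There is no serious obstacle in this argument; the only delicate point is confirming the reduction to the square subsystem, i.e., that the solution of $A'y=b'$ actually solves the original overdetermined system—this is where the consistency hypothesis on $Ax=b$ gets used.
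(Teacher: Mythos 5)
Your proof is correct and follows essentially the same route as the paper: restrict to an invertible $r\times r$ submatrix, apply Cramer's rule (equivalently, the cofactor formula for the inverse), use integrality to get $\lvert\det A'\rvert\geq 1$, and bound the relevant $(r-1)\times(r-1)$ minors by $p^{r-1}$. The one small divergence is in how that minor bound is justified: the paper invokes Hadamard's inequality, whereas you count nonzero terms in the Leibniz expansion directly using the row-sparsity hypothesis. Your count is arguably the cleaner fit here, since it gives exactly $p^{r-1}$, while a literal application of Hadamard's inequality (with rows of $\ell^2$-norm at most $\sqrt{p}$) would actually yield the stronger bound $p^{(r-1)/2}$ — so the paper's citation is a bit loose, and your elementary argument supplies precisely what is claimed.
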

\begin{proof}
  Let $r$ be the rank of $A$.  By reordering the bases for $\mathbb R^M$ and
  $\mathbb R^N$, we can assume that $A$ has a rank-$r$ invertible matrix $R$ in
  the upper left-hand corner.  Then $x=R^{-1}(b_1,\ldots,b_r) \times (0)^{N-r}$
  is a solution to $Ax=b$.  Now the entries of $R^{-1}$ are of the form
  $\det C_{ij}/\det R$ where $C_{ij}$ is a cofactor of $A$.  Since $C_{ij}$ is up to sign an
  $(r-1) \times (r-1)$ minor of $A$, and $A$ has at most $p$ nonzero entries per
  row, $|\det C_{ij}| \leq p^{r-1}$, by Hadamard's inequality.  Thus the entries of $x$ are at most
  $rp^{r-1}\lVert b \rVert_\infty$ in absolute value.
\end{proof}
\begin{cor} \label{cor:sIP}
  Let $X$ be a simplicial complex with $M$ $q$-simplices and $N$
  $(q-1)$-simplices.  Then every simplicial $q$-coboundary $b$ in $X$ is the
  coboundary $\delta c$ of some $(q-1)$-cochain $c$ satisfying
  \[
  \lVert c \rVert_\infty \leq \min\{M,N\}(p+1)^{\min\{M,N\}-1}\lVert b \rVert_\infty.
  \]
\end{cor}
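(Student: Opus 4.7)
The plan is to deduce the corollary as a direct application of Lemma \ref{lem:matrix}. First, I would represent the coboundary operator $\delta : C^{q-1}(X;\RR) \to C^q(X;\RR)$ as an $M \times N$ matrix $A$, where the rows are indexed by the $M$ oriented $q$-simplices of $X$ and the columns by the $N$ oriented $(q-1)$-simplices, with the $(\sigma,\tau)$ entry being the incidence number $[\sigma:\tau] \in \{-1,0,+1\}$. This is just the familiar cochain formula $(\delta c)(\sigma)=\sum_{\tau} [\sigma:\tau] c(\tau)$ written as a linear map.

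Next, I would verify the hypotheses of Lemma \ref{lem:matrix}. The entries of $A$ are $\pm 1$ or $0$ by construction. For the row constraint, each $q$-simplex has exactly $q+1$ faces of dimension $q-1$, so each row of $A$ has at most $q+1$ nonzero entries; this is the role played by the quantity $p+1$ in the statement of the corollary (with $p=q$).

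Finally, by assumption $b$ is a coboundary, so the equation $Ax=b$ has a solution over $\RR$. Applying Lemma \ref{lem:matrix} with the bound $q+1$ on nonzero entries per row yields a solution $c$ (i.e., a $(q-1)$-cochain with $\delta c = b$) satisfying
\[\lVert c \rVert_\infty \leq \min\{M,N\}(q+1)^{\min\{M,N\}-1}\lVert b \rVert_\infty,\]
which is precisely the claimed inequality.

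There is really no obstacle here beyond translating simplicial cochain language into the matrix language of the preceding lemma. The only mild subtlety is notational: one must recognize that the ``$p$'' in Lemma \ref{lem:matrix} (bound on nonzero entries per row) corresponds to $q+1$ (number of codimension-one faces of a $q$-simplex) in the simplicial setting, which is the reason the corollary is stated with $(p+1)$ rather than $p$ in the exponent.
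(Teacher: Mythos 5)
Your proof is correct and matches the paper's approach, which is simply to apply Lemma \ref{lem:matrix} to the matrix of the simplicial coboundary map, noting that a $q$-simplex has $q+1$ faces of dimension $q-1$. You have correctly navigated the notational wrinkle that the corollary's statement writes $(p+1)$ for what is $q+1$ in the simplicial setting (so that the lemma is applied with its row-sparsity parameter equal to $q+1$).
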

\begin{proof}
    Apply Lemma \ref{lem:matrix} to the coboundary map, using that
    a $q$-simplex has $q+1$ simplices of dimension $q-1$ in its boundary.
\end{proof}

To show a continuous version of this fact, we introduce a lemma from work of the third author that helps transition between discrete and continuous coisoperimetric inequalities: 

\begin{lem}[{Second quantitative Poincar\'e lemma \cite[Lemma 2--4]{PCDF}}] \label{lem:2QP}
  For every $0<q \leq p$, there is a constant $C_{p,q}$ such that the following
  holds.  Let $\omega \in \Omega^q(\Delta^p)$ be a closed $q$-form, and let
  $\alpha_\partial \in \Omega^{q-1}(\partial\Delta^p)$ be a primitive for $\omega|_{\partial\Delta^p}$.  If $p=q$, we also require
  that the pair satisfies Stokes' theorem, that is,
  $\int_{\Delta^q} \omega=\int_{\partial\Delta^q} \alpha_\partial$.  Then there is
  a $(q-1)$-form $\alpha \in \Omega^{q-1}(\Delta^p)$ extending $\alpha_\partial$
  such that $d\alpha=\omega$ and $\lVert\alpha\rVert_\infty \leq
  C_{p,q}(\lVert\omega\rVert_\infty+\lVert\alpha_\partial\rVert_\infty)$.
  \qed
\end{lem}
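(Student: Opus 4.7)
The strategy is to combine a standard cone homotopy operator with an inductive correction on the boundary. The proof proceeds by induction on $p$, with base case $p = q = 1$ given by the formula $\alpha(x) = \alpha_\partial(0) + \int_0^x \omega$; here Stokes' theorem ensures $\alpha(1) = \alpha_\partial(1)$, and the sup-norm bound is immediate.

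For the inductive step, first apply the cone homotopy operator $K$ based at the barycenter $b$ of $\Delta^p$: the formula
\[
(K\omega)_x(v_1,\ldots,v_{q-1}) = \int_0^1 t^{q-1}\,\omega_{b+t(x-b)}(x-b,v_1,\ldots,v_{q-1})\,dt
\]
yields a primitive of $\omega$ satisfying $\lVert K\omega\rVert_\infty \leq C_p \lVert\omega\rVert_\infty$, since $\Delta^p$ has bounded diameter. Let $\beta_\partial := \alpha_\partial - (K\omega)|_{\partial\Delta^p}$, a closed $(q-1)$-form on $\partial\Delta^p \cong S^{p-1}$. When $0 < q-1 < p-1$, $H^{q-1}(S^{p-1}) = 0$ and $\beta_\partial$ is automatically exact. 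When $q = p$, the Stokes hypothesis $\int_{\Delta^p}\omega = \int_{\partial\Delta^p}\alpha_\partial$ together with $\int_{\partial\Delta^p} K\omega = \int_{\Delta^p}\omega$ forces $\int_{\partial\Delta^p}\beta_\partial = 0$, which is exactness in top degree.

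The core task is to produce a primitive $\mu$ of $\beta_\partial$ on $\partial\Delta^p$ with $\lVert\mu\rVert_\infty \leq C'_{p-1,q-1}\lVert\beta_\partial\rVert_\infty$. This is done one $(p-1)$-face at a time along a shelling order: choose a starting face $F_0$, set $\mu := 0$ there, and for each subsequent face $F$, the restriction $\beta_\partial|_F$ together with the already-determined values of $\mu$ on $F \cap (\text{previous faces})$ provides boundary data to which the inductive hypothesis applies in dimension $p-1$ (the Stokes compatibility for top degree on $F$ follows from a telescoping cancellation together with the global integral vanishing). Once $\mu$ is assembled on $\partial\Delta^p$, extend it inward to $\tilde\mu$ using a collar-cutoff $\tilde\mu = \chi \cdot \pi^*\mu$ where $\pi$ is projection from a collar $\partial\Delta^p \times [0,1]$, giving $\lVert\tilde\mu\rVert_\infty \leq C\lVert\mu\rVert_\infty$. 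Then $\alpha := K\omega + d\tilde\mu$ satisfies $d\alpha = \omega$ and restricts to $\alpha_\partial$ on $\partial\Delta^p$, with the required sup-norm bound.

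The main obstacle is bookkeeping: ensuring that the primitives built face-by-face on $\partial\Delta^p$ are compatible on common $(p-2)$-simplices, and tracking that each layer of the induction (collar extension, cone, face-by-face assembly) multiplies the constant only finitely often, so the final $C_{p,q}$ is finite. A subtlety at top degree $q = p$ is distributing the zero global integral over the faces in a way consistent with the shelling; this can be absorbed by allowing one distinguished face to carry the aggregate discrepancy, which does not disturb the norm bound since the discrepancy is itself controlled by $\lVert\beta_\partial\rVert_\infty$ times the volume of $\partial\Delta^p$.
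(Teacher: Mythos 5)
The paper does not prove this lemma; it is cited from \cite{PCDF} and marked \textbf{q.e.d.}~without argument, so there is nothing here to compare against. Your outline (subtract a cone primitive $K\omega$, observe that $\beta_\partial := \alpha_\partial - K\omega|_{\partial\Delta^p}$ is closed and, by $H^{q-1}(S^{p-1})=0$ or the Stokes hypothesis, exact, find a bounded primitive $\mu$ on $\partial\Delta^p$, extend via a collar, and add $d\tilde\mu$) is a natural strategy and is likely close to what \cite{PCDF} actually does; the cone estimate and the collar norm bookkeeping you sketch are fine.

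The genuine gap is in the middle step. You write ``set $\mu := 0$'' on the first face $F_0$, but $\mu|_{F_0}$ must be a bounded primitive of $\beta_\partial|_{F_0}$, which is generally nonzero, so zero is not an option. More seriously, when you move to a subsequent face $F_i$ the already-determined values of $\mu$ live only on $F_i \cap (F_0\cup\cdots\cup F_{i-1})$, whereas the inductive hypothesis in dimension $p-1$ requires prescribed boundary data on \emph{all} of $\partial F_i$; you must first extend the partial data across the rest of $\partial F_i$ as a primitive of $\beta_\partial|_{\partial F_i}$, which is itself a quantitative extension problem one dimension lower, so the induction must descend through every skeleton of $\Delta^p$, not just through the codimension-one faces. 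That recursive descent, not ``bookkeeping,'' is the actual content. Your remark about distributing the integral discrepancy also misdescribes what happens at top degree: for the last face the boundary data is fully forced and Stokes compatibility holds automatically by the vanishing of $\int_{S^{p-1}}\beta_\partial$, not by parking a discrepancy on a distinguished face. Two smaller omissions: the case $q=1$ falls outside your exactness dichotomy (a closed $0$-form on connected $S^{p-1}$ is a constant with no primitive; one simply adds that constant to $K\omega$), and the face-by-face $\mu$ and the collar near the corners of $\Delta^p$ both need a smoothing remark. The architecture is right, but as written the key step is a pointer to a proof rather than a proof.
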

Now we proceed with the continuous version of Corollary \ref{cor:sIP}:
\begin{lem} \label{lem:genIP}
  Let $M$ be an $p$-dimensional submanifold of $\RR^m$ or $S^m$, of thickness $\geq \tau$, perhaps with a boundary which also has thickness $\geq \tau$.  Then every exact $q$-form $\omega$ on $M$ 
  has a primitive $\alpha \in \Omega^{q-1}(M)$ such that 
  \[\lVert\alpha\rVert_\infty \leq C(p,m)\tau^{-(p-1)}\vol(M) \exp(C(p,m)\tau^{-p}\vol(M)) \lVert\omega\rVert_\infty.\]
\end{lem}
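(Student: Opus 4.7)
The approach is to transfer the problem to the simplicial category, apply the discrete coisoperimetric inequality of Corollary \ref{cor:sIP}, and then return to smooth differential forms via Lemma \ref{lem:2QP}.  The exponential factor in the stated bound will arise entirely from the discrete step, while the polynomial prefactor comes from converting between cochain and form scales.  Throughout the proof, $C$ denotes an unspecified constant depending only on $p$ and $m$ that may change from line to line, in the spirit of Remark \ref{rmk:constants}.

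First, I would choose a smooth geometric triangulation $\mathcal T$ of $M$ with mesh size $\sim \tau$, bounded local combinatorial complexity, and $N \sim \tau^{-p}\vol(M)$ simplices; such a triangulation exists because the thickness assumption controls the local geometry of $M$ at scale $\tau$.  Integrating $\omega$ over each oriented $q$-simplex produces a simplicial cochain $c \in C^q(\mathcal T;\RR)$ with $\lVert c \rVert_\infty \le C\tau^q\lVert\omega\rVert_\infty$.  Since $\omega$ is exact, Stokes' theorem realizes $c$ as a simplicial coboundary, so Corollary \ref{cor:sIP} produces $c' \in C^{q-1}(\mathcal T;\RR)$ with $\delta c' = c$ and
\[
\lVert c' \rVert_\infty \;\le\; N(p+1)^{N-1}\lVert c \rVert_\infty \;\le\; C\,\tau^{q-p}\vol(M)\exp\bigl(C\tau^{-p}\vol(M)\bigr)\lVert \omega \rVert_\infty.
\]

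Finally, I would lift $c'$ back to a smooth form in two steps.  First, take a smoothed Whitney form $\alpha_0 \in \Omega^{q-1}(M)$ realizing $c'$, so that $\int_\tau \alpha_0 = c'(\tau)$ for every $(q-1)$-simplex $\tau$ and $\lVert\alpha_0\rVert_\infty \le C\tau^{-(q-1)}\lVert c'\rVert_\infty$; this forces $\omega - d\alpha_0$ to have zero integral over every $q$-simplex.  Second, inductively construct a correction $\beta \in \Omega^{q-1}(M)$ with $d\beta = \omega - d\alpha_0$ and $\beta$ vanishing on the $(q-1)$-skeleton, by applying Lemma \ref{lem:2QP} simplex by simplex: on each $q$-simplex $\sigma$, the Stokes hypothesis required by the lemma is exactly the vanishing $\int_\sigma (\omega - d\alpha_0) = 0$ just arranged; on $k$-simplices with $q < k \le p$, one extends $\beta$ from its already constructed boundary values using that $\omega - d\alpha_0$ is closed.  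A rescaling argument applied to Lemma \ref{lem:2QP}, converting from the unit simplex to a $\tau$-sized simplex, shows that each extension step contributes only a bounded multiplicative factor plus an additive term proportional to $\tau\lVert\omega\rVert_\infty$, so setting $\alpha = \alpha_0 + \beta$ yields the claimed bound, since $\tau^{-(q-1)}\cdot\tau^{q-p} = \tau^{-(p-1)}$.  The principal obstacle is the bookkeeping in this inductive extension, particularly the rescaling estimates and how boundary norms propagate through dimensions $q,\ldots,p$; a secondary technical point is producing a genuinely smooth rather than piecewise smooth $\alpha_0$, which is handled by mollifying in local coordinates at the cost of a bounded multiplicative constant.
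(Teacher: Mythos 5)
Your proof is correct and follows essentially the same route as the paper's: triangulate at scale $\tau$, integrate $\omega$ to a simplicial cochain, apply Corollary \ref{cor:sIP} for the exponential coisoperimetric bound, and reconstruct a smooth primitive skeleton by skeleton via Lemma \ref{lem:2QP}.  The only organizational differences are that the paper first rescales to $\tau=1$ (so all the $\tau$-dependence enters at the start and end) and builds $\alpha$ directly by placing bump forms $a(\sigma)\ph\,d\vol$ on the $(q-1)$-simplices rather than introducing a Whitney form $\alpha_0$ and a correction $\beta$; these are equivalent bookkeeping choices, and yours requires the extra mollification remark that you correctly flag.
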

\begin{rmk}
    Rather than $\RR^m$ or $S^m$, one could use any fixed ambient manifold, although this will affect the constants.
\end{rmk}
\begin{proof}[Proof of Lemma \ref{lem:genIP}]
  By rescaling, we can assume that $\tau=1$.  This multiplies $\lVert\alpha\rVert_\infty$ by $\tau^{q-1}$, $\lVert\omega\rVert_\infty$ by $\tau^q$, and the volume of $M$ by $\tau^{-p}$, accounting for the power of $\tau$ in the inequality.

  Now $M$ has a triangulation $T$ whose simplices are $C(p,m)$-bilipschitz to the standard linear simplex.  When the ambient manifold is $\RR^m$, this can be obtained by approximating $M$ using a grid, as remarked already by Whitney; see \cite{Whi,BKW}.  In particular, the number of simplices of $T$ is proportional to the volume of $M$.  For the sphere, one can first impose the standard cubical structure, and then subdivide.

  Let $w \in C^q(T;\RR)$ be the simplicial cochain obtained by integrating
  $\omega$ over the simplices of $T$; then $\lVert w \rVert_\infty \leq C(p,m)\lVert\omega\rVert_\infty$.  By the de Rham theorem, $w$ is a coboundary,
  and by Corollary \ref{cor:sIP}, $w=\delta a$ where
  \[\lVert a \rVert_\infty \leq C(p,m)\vol(M)\exp(C(p,m)\vol (M))\lVert w \rVert_\infty.\]
  Finally, we use $a$ and $\omega$ to construct the form $\alpha$ skeleton by
  skeleton, with the help of Lemma \ref{lem:2QP}.  We start with the
  $(q-1)$-simplices; on each $(q-1)$-simplex $\sigma$ we set
  \[\alpha|_\sigma=a(\sigma)\ph d\vol,\]
  where $\ph$ is a fixed bump function which is zero on $\partial\sigma$.  Then
  we extend $\alpha$ to each higher skeleton inductively; by Lemma
  \ref{lem:2QP}, at each step the operator norm increases by at most a constant
  $C_{p,q}$.
\end{proof}
Now we show that in the lowest and highest dimensions, we can do better: the
coisoperimetric constants are linear in the volume, rather than exponential.
\begin{lem} \label{lem:bottomIP}
  Let $M$ be a Riemannian manifold of diameter $D$.  Then every exact $1$-form
  $\omega$ on $M$ has a primitive $f:M \to \mathbb R$ such that 
  $\lVert f \rVert_\infty \leq \frac{D}{2}\lVert \omega \rVert_\infty$.
\end{lem}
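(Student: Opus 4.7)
The plan is standard: exploit the fact that a primitive of a $1$-form is determined up to an additive constant by path integrals, and bound the oscillation of such a primitive using the diameter.

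First I would fix any basepoint $x_0 \in M$ and, since $\omega$ is exact, define $f_0: M \to \mathbb{R}$ by $f_0(x) = \int_{\gamma_x} \omega$, where $\gamma_x$ is any smooth path from $x_0$ to $x$. Exactness of $\omega$ guarantees that this is independent of the choice of path, and it is immediate from the definition that $df_0 = \omega$. For any two points $x, y \in M$ and any $\epsilon > 0$, choose a smooth path $\gamma$ from $y$ to $x$ of length at most $d(x,y) + \epsilon \leq D + \epsilon$. Then
\[
|f_0(x) - f_0(y)| = \left| \int_\gamma \omega \right| \leq \operatorname{length}(\gamma) \cdot \|\omega\|_\infty \leq (D + \epsilon) \|\omega\|_\infty.
\]
Letting $\epsilon \to 0$ yields $|f_0(x) - f_0(y)| \leq D \|\omega\|_\infty$, so the oscillation $\sup f_0 - \inf f_0$ is at most $D\|\omega\|_\infty$.

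Finally, I would set $f = f_0 - \tfrac{1}{2}(\sup f_0 + \inf f_0)$, which is still a primitive of $\omega$ and satisfies $\|f\|_\infty \leq \tfrac{1}{2}(\sup f_0 - \inf f_0) \leq \tfrac{D}{2}\|\omega\|_\infty$, as required. There is no real obstacle here; the only subtlety is confirming that the primitive obtained by path integration is well-defined (which uses exactness, not merely closedness) and being careful to use a near-minimizing path rather than assuming a minimizing geodesic exists (which it may not on a general Riemannian manifold that is not complete).
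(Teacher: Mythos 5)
Your proof is correct and follows essentially the same argument as the paper: define a primitive by path integration from a basepoint, bound its oscillation by $D\lVert\omega\rVert_\infty$, and recenter by a constant. Your extra care about using near-minimizing paths (rather than assuming minimizing geodesics exist) is a nice touch, but the approach is identical.
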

\begin{proof}
  Since $\omega$ is exact, we can construct the function $f$ by setting it to be zero at some
  point and integrating along paths from that point.  Then the difference between
  the maximal and minimal value of $f$ is at most $D\lVert\omega\rVert_\infty$;
  by adding a constant we can make the desired inequality hold.
\end{proof}
\begin{lem} \label{lem:topIP}
  Let $M$ be an $p$-dimensional submanifold of $\RR^m$ of thickness $\tau$.  Then
  every exact $p$-form $\omega$ on $M$ 
  has a primitive 
  $\alpha \in \Omega^{p-1}(M)$ such that 
  \[\lVert \alpha \rVert_\infty \leq c_{p,m}\vol(M)\tau^{-(p-1)}\lVert\omega\rVert_\infty,\]
where $c_{n,m}$ is a constant.
\end{lem}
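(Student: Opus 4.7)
The plan is to parallel the proof of Lemma \ref{lem:genIP}, substituting the exponential simplicial coisoperimetric bound of Corollary \ref{cor:sIP} with a sharper linear bound that is available specifically in top degree. First, I would rescale so that $\tau=1$, a legitimate reduction since this multiplies $\lVert\alpha\rVert_\infty$ by $\tau^{p-1}$, $\lVert\omega\rVert_\infty$ by $\tau^p$, and $\vol(M)$ by $\tau^{-p}$, accounting for the factor $\tau^{-(p-1)}$ in the desired inequality. Then, as in the earlier proof, I would triangulate $M$ by a triangulation $T$ with at most $C(p,m)\vol(M)$ simplices, each uniformly bilipschitz to a standard simplex, and define $w \in C^p(T;\RR)$ by $w(\sigma)=\int_\sigma \omega$, so that $\lVert w\rVert_\infty \leq C(p,m)\lVert\omega\rVert_\infty$ and hence $\lVert w\rVert_1 \leq C(p,m)\vol(M)\lVert\omega\rVert_\infty$. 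Since $\omega$ is exact on $M$, the de Rham theorem guarantees that $w$ is a simplicial coboundary.

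The main step is to find $a \in C^{p-1}(T;\RR)$ with $\delta a = w$ and $\lVert a\rVert_\infty \leq \lVert w\rVert_1$. In top degree the coboundary operator is especially well-behaved: each interior $(p-1)$-simplex is a face of exactly two $p$-simplices, so solving $\delta a = w$ becomes a network-flow problem on the dual graph $G$ whose vertices are the $p$-simplices of $T$ and whose edges are the interior $(p-1)$-simplices, with an auxiliary virtual vertex joined to each boundary $(p-1)$-simplex when $\partial M \neq \emptyset$. Assuming $M$ is connected, I would pick a spanning tree of $G$ rooted at the virtual vertex in the bounded case, or at any chosen $p$-simplex in the closed case (where exactness of $\omega$ forces $\sum_\sigma w(\sigma)=0$, which is precisely the needed feasibility condition). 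Setting $a=0$ off the spanning tree and $a(\sigma')$ on each tree edge equal to the signed sum of $w(\sigma)$ over the $p$-simplices in the subtree on one side of $\sigma'$ yields a valid primitive, and by construction $|a(\sigma')|$ is bounded by $\lVert w\rVert_1$.

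With this combinatorial primitive in hand, I would promote $a$ to the differential form $\alpha$ via the inductive construction at the end of the proof of Lemma \ref{lem:genIP}: set $\alpha|_\sigma = a(\sigma)\varphi\,d\vol_\sigma$ on each $(p-1)$-simplex with a fixed bump function $\varphi$ vanishing on $\partial\sigma$, then extend across each $p$-simplex using Lemma \ref{lem:2QP} with boundary data coming from the $(p-1)$-skeleton and interior target $\omega$. This single extension step multiplies the operator norm by a constant depending on $p$ and $m$, yielding $\lVert\alpha\rVert_\infty \leq C(p,m)\vol(M)\lVert\omega\rVert_\infty$, which after unrescaling gives the desired bound. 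The main (though essentially routine) obstacle will be the careful bookkeeping in the spanning-tree argument when $M$ or $\partial M$ is disconnected; this will reduce to the connected case by working component by component.
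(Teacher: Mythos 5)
Your proof is correct and follows essentially the same approach as the paper's: reduce to the simplicial problem on a uniformly bilipschitz triangulation, solve $\delta a = w$ on the dual graph with an $\ell^\infty$ bound that is linear (rather than exponential) in the number of simplices, and promote back to a smooth form via the bump-function-plus-Lemma-\ref{lem:2QP} step from the proof of Lemma \ref{lem:genIP}. The only difference is in the combinatorial core: the paper decomposes $w$ into dipoles $c_i(\chi_{\sigma_i} - \chi_{\sigma'_i})$ with $\sum_i c_i = \frac12\lVert w\rVert_1$ and routes each dipole along a path in the dual graph, whereas you route all the mass along a single spanning tree (with a virtual root to absorb boundary, which handles the bounded case a bit more explicitly than the paper does) --- these are two phrasings of the same transport argument and give bounds differing by at most a factor of $2$.
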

\begin{rmk}
    The value of $c_{p,m}$ cannot be less than $(2\vol\mathbb S^{p-1})^{-1}$, where $\mathbb S^{p-1}$ is the round unit $(p-1)$-sphere; this is demonstrated by taking $M$ to be an arbitrarily long sausage of radius $\tau$.  One could hope to show that this is sharp, if a sharp isoperimetric inequality for small volumes of the type discussed in \cite{JoMo,Druet} holds in the largest possible range.  Alternatively, one can use Almgren's sharp isoperimetric inequality in $\RR^m$ \cite{Alm} to fill small $(p-1)$-cycles in $M$ and then project the resulting filling to $M$; this strategy at least gives a value of $c_{p,m}$ which only depends on $p$, but a full proof of this does not seem relevant to this paper.
\end{rmk}
\begin{proof}[Proof of Lemma \ref{lem:topIP}]
    The proof proceeds similarly to that of Lemma \ref{lem:genIP}.  Fix a triangulation $T$, and let $w \in C^m(T;\RR)$ be the simplicial cochain obtained by integrating $\omega$.  The fact that $w$ is a coboundary is equivalent to its total sum over all oriented $m$-simplices being zero.  So we can write $w$ as a sum of cochains supported on two simplices,
    \[w=\sum_i w_i=\sum_i c_i(\chi_{\sigma_i}-\chi_{\sigma'_i}),\]
    with $\sum_i c_i=\frac{1}{2}\lVert w \rVert_1$.  Now each $w_i$ has a primitive $a_i$ with $\lVert a_i \rVert_\infty=c_i$, supported on a path from $\sigma_i$ to $\sigma'_i$ in the dual graph of $T$.  Then $w=\delta a$, where $a=\sum_i a_i$, and therefore
    \[\lVert a \rVert_\infty \leq \sum_i c_i \leq \frac{1}{2}\lVert w \rVert_1 \leq \frac{1}{2}\lvert T \rvert\lVert w \rVert_\infty \leq C(p,m)\vol(M)\lVert\omega\rVert_\infty.\qedhere\]
\end{proof}

Finally, we show that coisoperimetric constants transfer from a thickly embedded manifold to its tubular neighborhood:
\begin{lem} \label{lem:tubeIP}
  Suppose that $M$ is an $p$-dimensional submanifold of $\RR^m$ of thickness
  $\tau$, and suppose that every exact $\omega \in \Omega^q(M)$ has a primitive
  $\alpha \in \Omega^{q-1}(M)$ such that
  \[\lVert\alpha\rVert_\infty \leq A\lVert\omega\rVert_\infty.\]
  Let $N$ be the embedded normal bundle of $M$ of radius $\tau/4$.  Then every
  exact $\omega \in \Omega^q(N)$ has a primitive $\alpha \in \Omega^{q-1}(M)$ such that
  \[\lVert\alpha\rVert_\infty \leq (2^{q-1}A+1)\lVert\omega\rVert_\infty.\]
\end{lem}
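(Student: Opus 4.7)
The plan is to build the primitive $\alpha$ on $N$ by combining a primitive on $M$ (supplied by the hypothesis) with a chain homotopy between $\id_N$ and the composition $i \circ \pi$, where $\pi \colon N \to M$ is the nearest-point projection and $i \colon M \hookrightarrow N$ is the inclusion. Since $N$ has radius $\tau/4 < \tau$, the projection $\pi$ is smooth and well-defined. Identifying each $q \in N$ with the pair $(\pi(q), v)$ where $v$ is the associated normal vector, the straight-line retraction $H \colon [0,1] \times N \to N$ defined by $H(s,q) = \exp_{\pi(q)}(sv)$ stays inside $N$, with $H_0 = i \circ \pi$ and $H_1 = \id_N$. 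The associated homotopy operator $K\omega = \int_0^1 \iota_{\partial_s} H^*\omega\, ds$ satisfies the standard formula $dK + Kd = \id - \pi^* i^*$.

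First I would note that if $\omega \in \Omega^q(N)$ is exact then $i^*\omega \in \Omega^q(M)$ is also exact. Applying the hypothesis on $M$, we obtain a primitive $\alpha_M \in \Omega^{q-1}(M)$ of $i^*\omega$ with $\|\alpha_M\|_\infty \leq A\|i^*\omega\|_\infty \leq A\|\omega\|_\infty$, where the last inequality uses that $i$ is an isometric embedding. Set $\alpha := \pi^*\alpha_M + K\omega$. Since $\omega$ is closed, the homotopy formula gives
\[
d\alpha = \pi^* d\alpha_M + dK\omega = \pi^* i^*\omega + (\omega - \pi^* i^*\omega) = \omega,
\]
so $\alpha$ is the desired primitive.

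The work is in the pointwise bound $\|\alpha\|_\infty \leq \|\pi^*\alpha_M\|_\infty + \|K\omega\|_\infty$. For the first term, I would argue that $d\pi$ has operator norm at most $2$ everywhere on $N$: the thickness hypothesis bounds the principal curvatures of $M$ by $1/\tau$, so the parallel hypersurface through a point at distance $r \leq \tau/4$ stretches horizontal tangent vectors by a factor at most $\tau/(\tau - r) \leq 4/3$, while the fiber directions are killed by $d\pi$. Thus $\|\pi^*\alpha_M\|_\infty \leq \|d\pi\|^{q-1} \|\alpha_M\|_\infty \leq 2^{q-1} A\|\omega\|_\infty$. For the second term, the integrand of $K\omega$ contracts $\omega$ with $\partial_s H$ (whose norm equals $|v| \leq \tau/4$) and with tangent vectors pushed forward by $dH_s$ (whose operator norm is controlled by the same curvature estimate). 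Integrating over $s \in [0,1]$ and keeping track of the scaling yields $\|K\omega\|_\infty \leq \|\omega\|_\infty$, which combines with the previous bound to give the claim.

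The main obstacle is extracting the precise numerical constants $2^{q-1}$ and $1$ from the geometry, rather than the structural step of writing $\alpha = \pi^*\alpha_M + K\omega$. Both bounds ultimately rely on the same fact—that within a fraction of the reach, the projection and the linear retraction are uniformly controlled by the curvature bound $1/\tau$—but carrying this through cleanly requires some care with the second fundamental form of $M$ and the way it governs the derivatives of $\pi$ and $H_s$.
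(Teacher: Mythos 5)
Your proof is correct and takes essentially the same route as the paper's: both set $\alpha = \pi^*\alpha_M + K\omega$ where $K$ is the fiberwise integral along the radial retraction, and both bound the two terms via the Lipschitz constant of $\pi$ (which gives the $2^{q-1}A$) and the normal distance $\leq \tau/4$ (which gives the $+1$). The only difference is stylistic: the paper first normalizes to $\tau = 1$ (which you should do too, since $\lVert K\omega\rVert_\infty \leq \lVert\omega\rVert_\infty$ is dimensionally inconsistent otherwise), while you are more explicit about the Federer-style reach estimates and the homotopy-operator formalism underlying the paper's one-line ``this form satisfies the desired bound.''
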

\begin{proof}
  First note that this property is scale-free, so we can assume $\tau=1$.  We
  start by letting $\alpha_0 \in \Omega^{q-1}(M)$ be a primitive of $\omega|_M$
  with the desired bound.  Our goal is to extend $\alpha_0$ to a primitive of
  $\omega$ on all of $N$ without increasing the operator norm too much.  We do
  this by taking
  \[\alpha(x)=\pi^*\alpha_0(x)+\int_{\pi(x)}^x \omega,\]
  where $\pi:N \to M$ is the orthogonal projection and $\int_{\pi(x)}^x$ denotes
  the fiberwise integral along the straight line from $\pi(x)$ to $x$.  Since
  $\pi$ is $2$-Lipschitz and the distance from $\pi(x)$ to $x$ is at most $1/4$,
  this form satisfies the desired bound.
\end{proof}

\subsection{Proof of Theorem \ref{thm:upper-bound}}
For each of the statements, we prove bounds by estimating operator norms of forms.  For the estimate in Theorem \ref{thm:upper-bound}\ref{case:exp}, which holds very generally, we use the standard Massey product formulation of Milnor invariants from Proposition \ref{simpler}, which generalizes to any sequence of indices.  For the other estimates, we use the forms constructed in Theorem \ref{thm:int-formula}.
We start with the exponential general bound.

\begin{proof}[{Proof of Theorem \ref{thm:upper-bound}\ref{case:exp}}]
  We first consider $\mubar(1, \dots, d)$.
  Choose $N_i$ to be tubular neighborhoods of $f_i(S^{p_i})$ of radius $\tau/4$.  This makes the complement of any collection of the $N_i$ into a codimension-$0$ submanifold which is $\tau/4$-thick with $\tau/4$-thick boundary, allowing us to apply Lemma \ref{lem:genIP}.  Applying it inductively, we will obtain bounds on the operator norms of the forms 
  \[\omega_{i..j} \in \Omega^*(S^m \setminus (N_{\ell_i} \cup \cdots \cup N_{\ell_j})).\]

  First note that we may choose $\xi_i$ Poincar\'e dual to $f_{\ell_i}(S^{p_{\ell_i}})$ so that 
  \[\lVert\xi_i\rVert_\infty\leq C(p_i,m)\tau^{-(q_i+1)}.\]
  Since the $\omega_{i..i}$ are obtained by integrating these in $S^m$, we get an analogous bound for $\lVert\omega_{i..i}\rVert_\infty$.  Now by Lemma \ref{lem:genIP},
  \[\lVert\omega_{i..j}\rVert_\infty \leq C(m,m)\tau^{-(m-1)}\exp(C(m,m)\tau^{-m})\left(\sum_{k=i}^{j-1} \lVert\omega_{i..k}\rVert_\infty\lVert\omega_{k+1..j}\rVert_\infty\right).\]
  By induction on $d$, we see that
  \[\lVert\omega_{1..d}\rVert_\infty \leq \operatorname{poly}(\tau^{-1})\exp((d-1)C(m,m)\tau^{-m}).\]
  Finally, we can choose $\theta_{1,d}$ so that $\lVert\theta_{1,d}\rVert_\infty=\tau/8$, so by Proposition \ref{simpler}, the bound on $\lVert\omega_{1..d}\rVert_\infty$ also applies to $\mubar(1,\ldots,d)$.  
  Ignoring the polynomial (which can be deleted after increasing the constant in the exponent), we obtain the desired bound on $\mubar(1,\ldots,d)$.  
  Finally, recalling that Proposition \ref{simpler} extends to arbitrary sequences $(\ell_1, \dots, \ell_d)$, we see that a completely analogous argument gives the same bound on $\mubar(\ell_1, \dots, \ell_d)$.
\end{proof}

Now we give a general outline using the integral formula of Theorem \ref{thm:int-formula} that allows us to plug in the various coisoperimetric estimates from \S\ref{S:coIP} to obtain the various results for distinct indices.  Throughout, we assume that the $N_i$ are tubular neighborhoods of $f_i(S^{p_i})$ of radius $\tau/4$.  To simplify notation, we will also assume that we are computing $\bar \mu(1,\ldots,d)$, since a modification of this argument will anyway be needed for repeated indices.

Let $B(i..j)$ be a bound on $\lVert\omega_{i..j}\rVert_\infty$ and let $B^{l/r}(i..j)$ be a bound on $\lVert\ph^{l/r}_{i..j}\rVert_\infty$.  We will inductively
produce estimates on these.  As above, we can take $B(i..i)=C(n,m)\tau^{-(q_i+1)}$.  We also know $\ph^{l/r}_{i..i}=1$, so $B^{l/r}(i..i)=1$.

Now let $A_{m,p,q}$ be the coisoperimetric constant for exact $q$-forms in an embedded $p$-manifold in $S^m$, subject to any geometric assumptions we may be making.  Then we have
\begin{align*}
    B^l(i..j) &\leq A_{m,p_j,q^l(i..j)}\cdot \sum_{k=i}^{j-1} B(i..k)B^l(k+1..j) \\
    B^r(i..j) &\leq A_{m,p_i,q^r(i..j)}\cdot \sum_{k=i}^{j-1} B^r(i..k)B(k+1..j),
\end{align*}
where $q^{l/r}(i..j)$ is given by
\[q^{l/r}(i..j)+q_{j/i}+1=\sum_{k=i}^j q_k-(j-i-2).\]
Similarly, since $\omega_{i..j}$ is obtained by integrating over the whole $m$-sphere,
\[B(i..j) \leq C(m)\cdot\left(\sum_{k=i}^{j-1} B(i..k)B(k+1..j)+\sum_{k=i}^j B^l(i..k)\tau^{-(q_k+1)}B^r(k..j)\right).\]

Let $A$ be the maximum value of $A_{m,p_j,q^l(i..j)}$ or $A_{m,p_i,q^r(i..j)}$ over the possible values of $i \leq j$.  Then by induction on $j-i$ we have
\begin{align*}
  B^l(i..j)\tau^{-(q_j+1)} &\leq C(m,d)A^{j-i}\tau^{-\sum_{k=i}^j (q_k+1)} \\
  B^r(i..j)\tau^{-(q_i+1)} &\leq C(m,d)A^{j-i}\tau^{-\sum_{k=i}^j (q_k+1)} \\
  B(i..j) &\leq C(m,d)A^{j-i}\tau^{-\sum_{k=i}^j (q_k+1)},
\end{align*}
and finally the integrand in \eqref{eqn:Massey} has operator norm bounded by
\begin{equation}
    \label{eq:bound-on-integrand}
    C(m)A^{d-2}\tau^{-\sum_{k=1}^d (q_k+1)} = C(m,d)A^{d-2}\tau^{-(m+2d-3)}.
\end{equation}
Since we integrate it over a region of constant volume, this bound is also the final bound on the Milnor invariant.

Now we consider what the values of $A$ may be in different cases.  In the most general case, by Lemmas \ref{lem:genIP} and \ref{lem:tubeIP}, 
\[A_{m,p,q} \leq C(p,m)\tau^{-(p-1)}\vol(M)\exp(C(p,m)\tau^{-p}\vol(M)).\]
Since $\vol(M) \leq \vol(S^m)/\tau^{m-p} \sim \tau^{-(m-p)}$, we can rewrite
this as
\begin{equation}
\label{eq:Ampq-bound}
    A_{m,p,q} \leq C(p,m)\tau^{-(m-1)}\exp(C(p,m)\tau^{-m}).
\end{equation}
Substituting this quantity for $A$ in formula \eqref{eq:bound-on-integrand}, we obtain
\[\mubar(1,\ldots,d) \leq \operatorname{poly}(\tau^{-1})\exp((d-2)C(p,m)\tau^{-m}),\]
which is roughly the same bound on this Milnor invariant as in part \ref{case:exp}, albeit this time only for distinct indices.
We will prove parts \ref{case:poly}, \ref{case:poly-rep}, and \ref{case:Lip} by refining the estimate \eqref{eq:Ampq-bound}.

\begin{proof}[{Proof of Theorem \ref{thm:upper-bound}\ref{case:poly} and \ref{case:poly-rep}}]
Let $p_1=1$ and $p_2=\cdots=p_d=m-2$, and first suppose that we are still computing $\mubar(1,\ldots,d)$.  Then we get
\begin{align*}
  q^r(1..j)=q^l(1..1) &= 1 \\
  q^l(1..j) &= m-2 & j &\neq 1 \\
  q^{l/r}(i..j,k) &= 1 & i &\neq 1.
\end{align*}
In every case, $A_{m,p,q}$ is controlled by Lemma \ref{lem:bottomIP} or by Lemmas \ref{lem:topIP} and \ref{lem:tubeIP}.  The diameter of each $f_i(S^{p_i})$ is bounded by $C(m)\tau^{-(m-1)}$, since a geodesic in the embedded manifold is an embedded curve of thickness $\tau$, and its neighborhood has volume $\leq 1$.  Thus the constant of Lemma \ref{lem:bottomIP} is at most $C(m)\tau^{-(m-1)}$.  Similarly, the volume of an $(m-2)$-dimensional embedded manifold of thickness $\tau$ is at most $C(m)\tau^{-2}$, and so the constant of Lemma \ref{lem:topIP} is at most $C(m)\tau^{-(m-1)}$ as well.  This gives
\begin{equation}
\label{eq:mu-1-d-bound}
\mubar(1,\ldots,d) \leq C(m,d)\tau^{-((d-2)(m-1)+m+2d-3)} = C(m,d)\tau^{-(m+1)(d-1)}.    
\end{equation}


Now suppose that we are estimating $\mubar(\ell_1, \dots, \ell_d)$ with repeats of indices allowed.  Notice that the dimension of the repeated components can only be $m-2$.  Then by Proposition \ref{prop:normal-1-3} (when $m=3$) or Proposition \ref{prop:normal-codim2} (otherwise), we can reduce this calculation to the case of non-repeated indices, but where the components corresponding to indices which were originally repeated may be at most $\sim \tau^2$-thick.  
Replacing $\tau$ by $\tau^2$ in the bound in formula \eqref{eq:mu-1-d-bound}, we get
\[\mubar(\ell_1,\ldots, \ell_d) \leq C(m,d)\tau^{-2(m+1)(d-1)}.\qedhere\]
\end{proof}

\begin{proof}[{Proof of Theorem \ref{thm:upper-bound}\ref{case:Lip}}]
Assume that each map $f_i$ is a locally $L$-bilipschitz map from a round sphere of radius $1$.  Then we can take primitives of $q$-forms in $f_i(S^{p_i})$ by pulling them back to $S^{p_i}$ (which multiplies the operator norm by at most $L^q$), taking the primitive there (which multiplies the operator norm by at most a constant), and then pulling the resulting $(q-1)$-form back again to $f_i(S^{p_i})$ (which multiplies the operator norm by at most $L^{q-1}$).  Then we have an estimate 
\[A_{m,p_i,q} \leq C(m,p_i,q)L^{2q-1} \leq C(m,p_i)L^{2p_i-1}.\]
Now rather than simply taking the largest possible value of this quantity to the power of $d-2$, we do a slightly more refined analysis.  By analyzing the integral formula \eqref{eqn:Massey} we see that in every product of $A_{m,p_i,q}$ that occurs in our bound, no index $i$ repeats.  Thus the product of $A_{m,p,q}$ resulting from any such chain is bounded by
\[C(m,d)L^{\sum_{i=1}^d (2p_i-1)}=C(m,d)L^{2((m-2)(d-1)+1)-d}.\]
In fact, since we are always taking products of at most $d-2$ factors (cf.~formula \eqref{eq:bound-on-integrand}), two of the $p_i$ will always be missing from the sum in the exponent.  The sum of the smallest two $p_i$ is always at least $m-1$, so we get that
\begin{align*}
  \mubar(1,\ldots,d) &\leq C(m,d)\tau^{m+2d-3}L^{2((m-2)(d-1)+1)-d-(2(m-1)-2)} \\
                      &= C(m,d)\tau^{m+2d-3}L^{(2m-5)(d-2)}.\qedhere
\end{align*}
\end{proof}

\section{Application to Freedman--Krushkal examples} \label{S:FK}

Freedman and Krushkal \cite{FK} studied the following question: given an $n$-complex which embeds in $\RR^{2n}$, how complicated can its simplest embedding be?  They proved an upper bound: for every $n>2$, every such complex with $N$ simplices embeds in the unit ball of $\RR^{2n}$ with thickness $O(\exp(-N^{4+\epsi}))$, with constants depending on $n$.  (On the other hand, for $n=2$, it seems likely that the complexity can be at least superexponential and perhaps cannot be bounded by any recursive function; for analogous results see the work of Boris Lishak, e.g.~\cite{Lishak-Nabutovsky:2017}.)  They also established a lower bound: for every $n \geq 2$, they construct a sequence of complexes $K_l$ with $O(l)$ vertices and at most $C(n)$ simplices adjacent to each vertex such that every embedding of $K_l$ in the unit ball has complexity at least $c(n)^{-l}$.  This lower bound is obtained by considering the linking number between two spherical subcomplexes of $K_l$; in any embedding, this turns out to be at least $2^l$.

In \cite[\S5]{FK}, they constructed additional $2$-complexes for which, in any embedding in $\RR^4$, certain spherical subcomplexes must have exponentially large $q$th-order linking coefficients, and they asked whether embeddings of these must also be exponentially thin.  We will use the polynomial regimes of Theorems \ref{main} and \ref{main-repeated} to confirm this conjecture.


	\begin{figure}[ht]
		\centering
		\includegraphics[width=0.4\linewidth]{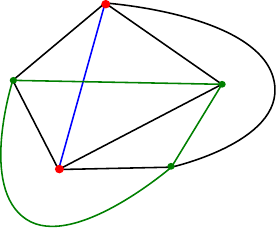}
		\caption{The graph $K^1_0$, immersed in $\RR^2$ with one intersection point between nonadjacent edges.  The 1-simplex $T$ is in blue, the 0-sphere $S_1$ consists of the two red vertices, and the 1-sphere $S_2$ is the green cycle.} \label{fig:k5}
	\end{figure}

\subsection{Exponentially thin $n$-complexes in $\mathbb{R}^{2n}$}
As a warmup, we summarize Freedman and Krushkal's construction of the complexes $K_l$ and their proof that they have large embedding complexity \cite[\S4]{FK}.

\subsubsection*{Base complex $K_0^n$} Denote the $(2n+2)$-simplex with vertices $v_0,\ldots,v_{2n+2}$ by $\Delta^{2n+2}=[v_0,\ldots,v_{2n+2}]$.  Define $K_0^n$ to be the $n$-skeleton of $\Delta^{2n+2}$ with one $n$-simplex $T = [v_0, \ldots, v_n]$ removed:
    \[
    K_0^n = \left(\Delta^{2n+2}\right)^{(n)} \setminus T.
    \]
    This complex contains an $(n-1)$-sphere $S_1 = \partial T$ and an $n$-sphere $S_2 = \partial[v_{n+1}, \ldots, v_{2n+2}]$; see Figure \ref{fig:k5} for the case $n=1$. By \cite[Lemma 6]{FKT}
    (a result dating back to van Kampen \cite{vK}) for any embedding $i:K_0^n \to \mathbb{R}^{2n}$, the linking number $\link(i(S_1), i(S_2))$ is odd.\footnote{Conversely, any odd linking number can be achieved \cite{KaSk}.}

\subsubsection*{Mapping telescope $K_l$} We obtain $K_l$ by attaching to $K^n_0$ the $l$-fold mapping telescope 
\begin{equation}\label{eq:K_l} 
    K_l := \left(\underbrace{S^{n-1} \xrightarrow{\times 2} S^{n-1} \xrightarrow{\times 2} \cdots \xrightarrow{\times 2} S^{n-1} \xrightarrow{\times 2}}_{l\text{ times}} S_1 \subset K^0_n\right)
\end{equation}
so that the leftmost copy of $S^{n-1}$ represents $2^l$ times the homotopy class of $S_1$.  What follows is a more detailed account of this construction.

Fix a triangulation $\Sigma$ of $S^{n-1}$ and a simplicial map $g:\Sigma \to \partial\Delta^n$ of degree $2$.  Fix also a triangulation $\overline\Sigma$ of $S^{n-1} \times [0,1]$ which restricts to $\partial\Delta^n$ at time $0$ (i.e.~on $S^{n-1} \times \{0\}$) and $\Sigma$ at time $1$.  Then
\[M=\overline\Sigma \sqcup \partial\Delta^n/(x,1) \sim g(x)\]
is the mapping cylinder of $g$, with subcomplexes $M^0$ at time $0$ and $M^1$ at time $1$ both isomorphic to $\partial\Delta^n$.

Denote $l$ copies of $M$ by $M_1,\ldots,M_l$, and let
\[K_l=K_0^n \sqcup \bigsqcup_{i=1}^l M_i/S_1 \sim M^1_1, M^0_i \sim M^1_{i+1} \text{ for }1 \leq i \leq l-1.\]
Then $K_l$ is a simplicial complex with $O(l)$ simplices which deformation retracts to $K_0^n$.

\subsubsection*{Embedding thickness of $K_l$} 
It is not hard to see that $K_l$ embeds in $\RR^{2n}$.  On the other hand, $[M^0_l] \in \pi_{n-1}(K_l \setminus S_2)$ is homotopic to $2^l[S_1]$, and so in any such embedding, the linking number of $M^0_l$ and $S_2$ is congruent to $2^l$ mod $2^{l+1}$.

Let $i:K_l \to B$ be an embedding of $K_l$ in the unit ball $B \subset \RR^{2n}$.  Then Freedman and Krushkal use the Gauss linking integral to show that the thickness $\tau$ of $M^0_l \sqcup S_2 \subset K_l$ satisfies
\[C(2n)\tau^{-(2n+1)} \geq \left\lvert \link(i(M^0_l), i(S_2)) \right\rvert \geq 2^l.\]
The first inequality is essentially Theorem
\ref{thm:upper-bound}\ref{case:2}.
Thus, the thickness of $i$ satisfies the upper bound
\begin{equation}\label{eq:tau-K_L-upper}
    \operatorname{thickness}(i) \leq \tau \leq C(n) \cdot 2^{-l / (2n+1)}
\end{equation}
and is therefore exponentially small as $l \to \infty$.

\subsection{Exponentially thin $2$-complexes in $\mathbb{R}^{4}$ via higher-order invariants}
In \cite[\S5]{FK}, Freedman and Krushkal construct 2-complexes $\bar{K}_{q,l}$ in $\mathbb{R}^4$ whose embeddings are constrained not by linking numbers between subcomplexes, but by higher-order linking invariants. We detail this construction, including an alternative version with $q+1$ copies of $K_0^2$, and show that these still have exponentially small thickness.

\subsubsection*{Base complex $\bar{K}_0$}
Take two copies of $K_0^2$ (the 2-skeleton of $\Delta^6$ minus a 2-simplex).  In these two copies, we distinguish the submanifolds $C'$ and $C''$ (boundary circles of the missing 2-simplices) and $S'$ and $S''$ (boundary 2-spheres of the 3-simplices on the four vertices not in $C'$ and $ C''$ respectively).  We identify a vertex in $C'$ with a vertex in $C''$ to get a complex
\[\bar{K}_0 = K_0^2 \vee K_0^2,\]
and denote the common vertex by $v$.

By van Kampen's obstruction 
\cite[Lemma 6]{FKT}
for any embedding $i:\bar K_0 \to \RR^4$, the linking numbers $\link(i(C'), i(S'))$ and $\link(i(C''), i(S''))$ are odd.  On the other hand, the remaining pairs are unlinked:
\begin{lem} \label{lem:no-link}
    The linking numbers $\link(i(C'), i(S''))$ and $\link(i(C''), i(S'))$ are zero.
\end{lem}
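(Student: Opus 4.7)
The plan is to compute $\link(i(C'), i(S''))$ via an explicit 2-chain bounding $i(C')$ that is disjoint from $i(S'')$, thereby forcing the algebraic intersection number (and hence the linking number) to vanish. The key is to build this bounding chain entirely inside the first wedge factor $A$, so that its image meets $i(B)$ only at the wedge point $i(v)$, which in turn does not lie on $i(S'')$.

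First, I would verify that $C'$ bounds a 2-chain supported in $A$, despite the absence in $A$ of the 2-simplex $T$ whose boundary it is. Labeling the vertices of the first copy of $\Delta^{6}$ so that the missing simplex is $T=[v_0,v_1,v_2]$, the identity
\[
\partial[v_0,v_1,v_2,v_3]=[v_1,v_2,v_3]-[v_0,v_2,v_3]+[v_0,v_1,v_3]-T
\]
shows that $\tau:=[v_1,v_2,v_3]-[v_0,v_2,v_3]+[v_0,v_1,v_3]$ satisfies $\partial\tau=\partial T=C'$. None of the three 2-simplices in $\tau$ equals $T$, so $\tau$ is supported in $A=K_0^2$.

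Next, I would invoke the standard formula expressing the linking number in $\mathbb{R}^4$ of a 1-cycle and a disjoint 2-cycle as the algebraic intersection number of the 2-cycle with any 2-chain bounding the 1-cycle. Taking this bounding chain to be $\Sigma:=i(\tau)$, we have $\Sigma\subset i(A)$ while $i(S'')\subset i(B)$. Since $i$ is an embedding of $\bar{K}_0 = A\vee B$, injectivity forces $i(A)\cap i(B)=\{i(v)\}$, so any intersection of $\Sigma$ with $i(S'')$ must occur at $i(v)$. But $v$ lies on $C'=\partial T$, hence $v\in\{v_0,v_1,v_2\}$, whereas $S''$ is supported on the complementary four vertices of the second copy of $\Delta^{6}$; thus $v\notin S''$ and, by injectivity of $i$, $i(v)\notin i(S'')$. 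The supports of $\Sigma$ and $i(S'')$ are therefore disjoint, so $\link(i(C'),i(S''))=0$.

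The symmetric assertion $\link(i(C''),i(S'))=0$ follows by interchanging the roles of the two wedge factors. The whole argument is essentially combinatorial-geometric: the only substantive step is producing the 2-chain $\tau\subset A$ bounding $C'$, which is available precisely because the missing simplex $T$ is a face of a 3-simplex of $\Delta^{6}$ whose remaining three faces do belong to $A$. There is no serious obstacle beyond this verification.
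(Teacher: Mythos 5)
Your proof is correct and uses essentially the same idea as the paper: in both cases one observes that $C'$ bounds a $2$-chain inside the first wedge factor (your $\tau$ is exactly the cone $v_3 * C'$ that the paper uses), which is then forced by injectivity of $i$ to be disjoint from $i(S'')$. The paper phrases the conclusion as ``isotope $i(C')$ along the cone to a small circle near $i(w)$,'' while you phrase it as ``the linking number is the intersection number of $i(S'')$ with $i(\tau)$, which is empty''; these are equivalent formulations of the same argument.
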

\begin{proof}
    Let $w$ be a vertex of $S'$.  The join of $w$ and $C'$ is a subcomplex of $\bar K_0$, so $i(C')$ is isotopic to a circle contained in a ball arbitrarily close to $i(w)$.  Since $i(S'')$ is at a definite distance from $i(w)$, $i(S'')$ and $i(C')$ cannot be linked.  The other case is analogous.
\end{proof}

\subsubsection*{Commutators $\bar{K}_q$} 
Let $x$ and $y$ be loops based at $v$ corresponding to $C'$ and $C''$ respectively.
We attach $q$ additional loops $w_1, \ldots, w_q$ to $\bar K_0$ at $v$ and then attach $q$ 2-cells $D_1, \ldots, D_q$ to $\bar K_0$ using the gluing maps
\[\partial D_1=w_1[x, y]^{-1}, \quad \partial D_2=w_2[x, w_1]^{-1}, \quad \ldots, \quad \partial D_q=w_q[x, w_{q-1}]^{-1},\]
to yield a complex $\bar K_q$.  This CW-complex can (up to homotopy) be triangulated with $O(q)$ simplices and 
\begin{equation}\label{eq:w_q}
    w_q = [x, [x, \cdots [x, y] \cdots ]]
\end{equation}
has word length $3 \cdot 2^q-2$ in the free group $F = \langle x, y \rangle$.

\subsubsection*{Alternative construction with $q+1$ copies of $K_0^2$} A variant uses $q+1$ copies of $K_0^2$, each with a boundary circle $C_j$ and a $2$-sphere $S_j$ ($j = 1, \ldots, q+1$), identified at a common vertex $v$ contained in each of the $C_j$.  As before, in any embedding of this wedge sum, $S_i$ and $C_j$ have odd linking number if $i=j$ and are unlinked otherwise.

Let $x_j$ be a loop based at $v$ corresponding to $C_j$.
Attach $q$ additional loops $w'_1,\ldots,w'_q$ based at $v$, as well as $2$-cells $D'_1,\ldots,D'_q$ with boundary maps
\[\partial D_1'=w'_1[x_q, x_{q+1}]^{-1}, \quad \partial D_2'=w'_2[x_{q-1}, w'_1]^{-1}, \quad \ldots, \quad \partial D_q'=w'_q[x_1, w'_{q-1}]^{-1}\]
to yield a complex $\bar K_q'$.  This CW-complex can likewise be triangulated with $O(q)$ simplices.  Moreover,
\begin{equation}\label{eq:w_q'}
    w_q' = [x_1, [x_2, [x_3, \ldots, [x_q, x_{q+1}]]]]
\end{equation}
is a commutator in $q+1$ distinct generators in the free group $F = \langle x_1, \ldots, x_{q+1} \rangle$ (that is, it induces a nontrivial element of the Milnor group of $F$) and again has word length $3 \cdot 2^q-2$.
    
\subsubsection*{Mapping telescope $\bar{K}_{q,l}$}
As with $K_l$, we obtain $\bar K_{q,l}$ by attaching to $\bar K_q$ the $l$-fold mapping telescope
\begin{equation}\label{eq:bar-K_q,l} 
    \bar K_{q,l} := \left(\underbrace{S^1 \xrightarrow{\times 2} S^1 \xrightarrow{\times 2} \cdots \xrightarrow{\times 2} S^1 \xrightarrow{\times 2}}_{l\text{ times}} w_q \subset \bar K_q\right)
\end{equation}
so that the leftmost copy of $S^1$ (which we denote by $C_{q,l}$) represents $2^l w_q \in F$.

Note that $\bar K_{q,l}$ deformation retracts to $\bar K_0$; moreover,
\[C_{q,l} \simeq [x, [x, \cdots [x, y] \cdots ]]^{2^l} \in F=\pi_1(\bar K_{q,l} \setminus (S_1 \cup S_2)).\]
It follows that for any embedding $i:\bar K_{q,l} \to \RR^4$, $i(C_{q,l})$ represents the conjugacy class of an element
\[C_{q,l} \simeq [\alpha_1, [\alpha_1, \cdots [\alpha_1, \alpha_2] \cdots ]]^{2^l} \in \pi_1(\RR^4 \setminus (i(S_1) \cup i(S_2))),\]
where the homology class of $\alpha_j$ is Alexander dual to $r_j[S_j]$ for some odd $r_j$.

Likewise, we obtain $\bar K_{q,l}'$ by attaching an $l$-fold mapping telescope along $w_q'$ to $\bar K_q'$.  Denoting the leftmost copy of $S^1$ by $C_{q,l}$, we get that for any embedding $i:\bar K_{q,l}' \to \RR^4$, $i(C_{q,l})$ represents the conjugacy class of an element
\[C_{q,l} \simeq [\alpha_1, [\alpha_2, \cdots [\alpha_q, \alpha_{q+1}] \cdots ]]^{2^l} \in \pi_1\left(\RR^4 \setminus \bigcup_{j=1}^{q+1} i(S_j)\right),\]
where the homology class of $\alpha_j$ is Alexander dual to $r_j[S_j]$ for some odd $r_j$.

\subsubsection*{Embedding thickness}
Now we prove Theorem \ref{thm-C}, which we restate here:
\begin{thm}
    Any embedding of the $2$-complex $\bar{K}_{q,l}$ or $\bar{K}_{q,l}'$ has thickness at most $c^{-l}$, where the constant $c>1$ depends on $q$.
\end{thm}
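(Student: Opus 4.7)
The plan is to restrict each embedding to a distinguished sublink including some of the spheres $S_j$ together with the circle $C_{q,l}$, identify a Milnor invariant of this sublink whose absolute value is at least $2^l$, and apply one of the polynomial upper bounds from Theorem \ref{main}\ref{intro-case:poly} or Theorem \ref{main-repeated}\ref{intro-case:poly-rep}. Since a sublink is at least as thick as the ambient $2$-complex, any thickness bound on the sublink also bounds the thickness of the embedding.

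For $\bar K_{q,l}'$, given an embedding $i:\bar K_{q,l}' \to \RR^4 \subset S^4$ of thickness $\tau$, I would consider the sublink $L := i(S_1) \sqcup \cdots \sqcup i(S_{q+1}) \sqcup i(C_{q,l})$, a link with $d := q+2$ components of dimensions $p_1 = \cdots = p_{q+1} = 2$ and $p_d = 1$ in $m = 4$; this satisfies \eqref{eq:dim-assumption-nr}. The class of $C_{q,l}$ in $\pi_1$ of the complement of the spheres is $[\alpha_1, [\alpha_2, \cdots [\alpha_q, \alpha_{q+1}] \cdots]]^{2^l}$, where $\alpha_j = r_j e_j$ for an odd integer $r_j$ and meridian $e_j$ of $S_j$. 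Applying the Milnor-group identity $[e^a, w] = [e, w]^a$ iteratively, as in the proof of Theorem \ref{thm:polynomial-thickness-mu}, shows this class is congruent modulo higher commutators to $[e_1, [e_2, \cdots [e_q, e_{q+1}] \cdots]]^{2^l r_1 \cdots r_{q+1}}$, so by Propositions \ref{Milnor=period} and \ref{Whitehead/period} we get $|\mubar(1, \ldots, d)| = 2^l |r_1 \cdots r_{q+1}| \geq 2^l$. Any proper subsequence of $(1, \ldots, d)$ either fails the dimensional condition (if it omits $C_{q,l}$) or pairs an iterated commutator against a tuple missing some $\alpha_{j_0}$ and hence vanishes, since each $\alpha_j$ appears exactly once in the commutator. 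Theorem \ref{main}\ref{intro-case:poly} then gives $2^l \leq C(q) \tau^{-5(q+1)}$, so $\tau \leq c^{-l}$ for some $c = c(q) > 1$.

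For $\bar K_{q,l}$, the argument is analogous with sublink $L := i(S_1) \sqcup i(S_2) \sqcup i(C_{q,l})$ and Milnor invariant $\mubar(\underbrace{1, \ldots, 1}_q, 2, 3)$ of depth $d = q+2$. Here the class of $C_{q,l}$ is $[\alpha_1, [\alpha_1, \cdots [\alpha_1, \alpha_2] \cdots]]^{2^l}$ with $q$ copies of $\alpha_1$, and the same Milnor-group manipulation gives $|\mubar| = 2^l |r_1^q r_2| \geq 2^l$. The vanishing of Milnor invariants on proper cyclic subsequences (required by Theorem \ref{thm:repeated-exist}) follows because $C_{q,l}$ represents an element of the $(q+1)$st lower central subgroup of the free group on the two meridians, making all Magnus coefficients of total degree $< q+1$ zero, while subinvariants omitting $S_1$ or $S_2$ evaluate the commutator in a quotient where the corresponding meridian becomes trivial. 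Theorem \ref{main-repeated}\ref{intro-case:poly-rep} then gives $2^l \leq C(q) \tau^{-10(q+1)}$, again yielding $\tau \leq c^{-l}$ for some $c = c(q) > 1$.

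The main technical point throughout is verifying that the hypotheses on lower-order Milnor invariants are met: for $\bar K_{q,l}'$ this is immediate from the distinct-indices setup, while for $\bar K_{q,l}$ it rests on the lower-central-series depth of the iterated commutator representing $C_{q,l}$ and on the fact that removing either of the two spheres trivializes the commutator. The sharp constant in the exponent is less important than the fact that the upper bound is polynomial in $\tau^{-1}$, since a polynomial bound combined with the exponential lower bound $2^l$ always produces an exponentially small thickness.
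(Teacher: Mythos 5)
Your proof is correct and follows essentially the same approach as the paper: restrict to the sublink formed by the $2$-spheres together with $C_{q,l}$, use the telescope structure and the Magnus/homotopy-period pairing to show that the relevant Milnor invariant has magnitude at least $2^l$ (with the odd integers $r_j$ from the van Kampen obstruction guaranteeing a nonzero integer coefficient), and then apply the polynomial regime of Theorems \ref{main}/\ref{main-repeated}. The only notable difference is that for $\bar K_{q,l}'$ you invoke Theorem \ref{main}\ref{intro-case:poly} to get the sharper exponent $5(q+1)$, whereas the paper applies Theorem \ref{main-repeated}\ref{intro-case:poly-rep} to both complexes; either choice yields the required exponential decay of thickness in $l$.
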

\begin{proof}
    We first analyze the structure of the group $\Gamma=\pi_1\left(\RR^4 \setminus \bigcup_j i(S_j)\right)$.  By Proposition \ref{prop:almost-free}, the lower central series quotients $\Gamma/\Gamma_k$ are free $k$-step nilpotent groups generated by the Alexander duals of the $i(S_j)$.  In both cases, the homomorphism $i_*:F \to \Gamma$ induces an injection on $H_1({-},\QQ)$, and therefore the $\alpha_j$ likewise generate a free $k$-step nilpotent subgroup in each of the lower central series quotients.

    In particular, $i(w_q)$ (respectively, $i(w_q')$) induces a nontrivial element in $\Gamma_{q+1}/\Gamma_{q+2}$.  By Proposition \ref{prop:period=Magnus}, this pairs nontrivially with a homotopy period $\pi_{x_{i_1,\ldots,i_r}}$; therefore we have
    \[\pi_{x_{i_1,\ldots,i_r}}(i(C_{q,l}))=c \cdot 2^l\]
    for some constant $c$.  By Proposition \ref{Milnor=period}, adapted to repeating indices, this homotopy period is equal to the Milnor invariant $\mubar(q+2,i_1,\ldots,i_r)$ of the link
    \[i(S_1) \cup \cdots \cup i(S_{q+1}) \cup i(C_{q,l}).\]
    Finally, by Theorem \ref{main-repeated}\ref{intro-case:poly-rep}, the thickness $\tau$ of this link satisfies
    \[c \cdot 2^l \leq C(q)\tau^{-10(q+1)},\]
    which in turn gives us the desired exponential estimate for the thickness of the whole embedded complex:
    \[\operatorname{thickness}\bigl(\bar{K}_{q,l}\bigr)\leq \tau \leq  2^{-l / 10(q+1)} C(q).\]
    The same argument gives the result for $\bar K_{q,l}'$.
\end{proof}

\bibliographystyle{amsplain}
\bibliography{thickness-no-desk}
\end{document}